\documentclass[12pt, a4paper]{amsart}
\usepackage{fullpage}
\usepackage{amscd,amsmath,amsthm,amssymb,graphics,color}
\renewcommand{\Bbb}{\mathbb}
 \usepackage{thmtools} 
 \usepackage[colorlinks=true, pdfstartview=FitV, linkcolor=blue, citecolor=blue, urlcolor=blue]{hyperref}
\usepackage[capitalise, noabbrev]{cleveref}
\usepackage{tikz}
\usepackage{pdfpages}
\usepackage{paralist}
\usepackage{tkz-graph}
\usepackage{tikz-cd}
\usepackage{caption,subcaption}

\usetikzlibrary{backgrounds}
\pgfdeclarelayer{cubes}
\pgfsetlayers{cubes,main}
\colorlet{cubecolor}{gray}

\newcommand\cubefaces[4][cubecolor]{%
  \begin{pgfonlayer}{cubes}
    \fill[#1!18] (#2+1,#3,#4)--(#2+1,#3+1,#4)--(#2+1,#3+1,#4-1)--(#2+1,#3,#4-1)--cycle;
    \fill[#1!6]  (#2,#3+1,#4)--(#2+1,#3+1,#4)--(#2+1,#3+1,#4-1)--(#2,#3+1,#4-1)--cycle;
    \fill[#1!12] (#2,#3,#4)--(#2+1,#3,#4)--(#2+1,#3+1,#4)--(#2,#3+1,#4)--cycle;
  \end{pgfonlayer}
}

\newcommand\shadecube[3]{%
  \begin{pgfonlayer}{cubes}
    \fill[cubecolor!18] (#1+1,#2,#3)--(#1+1,#2+1,#3)--(#1+1,#2+1,#3-1)--(#1+1,#2,#3-1)--cycle;
    \fill[cubecolor!6]  (#1,#2+1,#3)--(#1+1,#2+1,#3)--(#1+1,#2+1,#3-1)--(#1,#2+1,#3-1)--cycle;
    \fill[cubecolor!12] (#1,#2,#3)--(#1+1,#2,#3)--(#1+1,#2+1,#3)--(#1,#2+1,#3)--cycle;
  \end{pgfonlayer}
}

\newcommand\unitcube[3]{%
  \cubefaces{#1}{#2}{#3}
  \draw (#1,#2,#3)--(#1+1,#2,#3)--(#1+1,#2,#3-1)--(#1,#2,#3-1)--cycle;
  \draw (#1,#2+1,#3)--(#1+1,#2+1,#3)--(#1+1,#2+1,#3-1)--(#1,#2+1,#3-1)--cycle;
  \draw (#1,#2,#3)--(#1,#2+1,#3);
  \draw (#1+1,#2,#3)--(#1+1,#2+1,#3);
  \draw (#1+1,#2,#3-1)--(#1+1,#2+1,#3-1);
  \draw (#1,#2,#3-1)--(#1,#2+1,#3-1);
}

\newcommand\unitcubebig[3]{%
  \cubefaces{#1}{#2}{#3}
  \draw (#1,#2,#3)--(#1+.33,#2,#3) (#1+.66,#2,#3)--(#1+1,#2,#3);
  \draw (#1,#2,#3-1)--(#1+.33,#2,#3-1) (#1+.66,#2,#3-1)--(#1+1,#2,#3-1);
  \draw (#1,#2+1,#3)--(#1+.33,#2+1,#3) (#1+.66,#2+1,#3)--(#1+1,#2+1,#3);
  \draw (#1,#2+1,#3-1)--(#1+.33,#2+1,#3-1) (#1+.66,#2+1,#3-1)--(#1+1,#2+1,#3-1);
  \draw (#1,#2,#3)--(#1,#2+.33,#3) (#1,#2+.66,#3)--(#1,#2+1,#3);
  \draw (#1,#2,#3-1)--(#1,#2+.33,#3-1) (#1,#2+.66,#3-1)--(#1,#2+1,#3-1);
  \draw (#1+1,#2,#3)--(#1+1,#2+.33,#3) (#1+1,#2+.66,#3)--(#1+1,#2+1,#3);
  \draw (#1+1,#2,#3-1)--(#1+1,#2+.33,#3-1) (#1+1,#2+.66,#3-1)--(#1+1,#2+1,#3-1);
  \draw (#1,#2+1,#3)--(#1,#2+1,#3-1);
  \draw (#1+1,#2+1,#3)--(#1+1,#2+1,#3-1);
  \draw (#1,#2,#3)--(#1,#2,#3-1);
  \draw (#1+1,#2,#3)--(#1+1,#2,#3-1);
  \draw[dotted,thick] (#1+.33,#2,#3)--(#1+.66,#2,#3);
  \draw[dotted,thick] (#1+.33,#2,#3-1)--(#1+.66,#2,#3-1);
  \draw[dotted,thick] (#1+.33,#2+1,#3)--(#1+.66,#2+1,#3);
  \draw[dotted,thick] (#1+.33,#2+1,#3-1)--(#1+.66,#2+1,#3-1);
  \draw[dotted,thick] (#1,#2+.33,#3)--(#1,#2+.66,#3);
  \draw[dotted,thick] (#1,#2+.33,#3-1)--(#1,#2+.66,#3-1);
  \draw[dotted,thick] (#1+1,#2+.33,#3)--(#1+1,#2+.66,#3);
  \draw[dotted,thick] (#1+1,#2+.33,#3-1)--(#1+1,#2+.66,#3-1);
}

\newcommand\unitcubeBig[3]{%
  \cubefaces{#1}{#2}{#3}
  \draw (#1,#2,#3)--(#1+.33,#2,#3) (#1+.66,#2,#3)--(#1+1,#2,#3);
  \draw (#1,#2,#3-1)--(#1+.33,#2,#3-1) (#1+.66,#2,#3-1)--(#1+1,#2,#3-1);
  \draw (#1,#2+1,#3)--(#1+.33,#2+1,#3) (#1+.66,#2+1,#3)--(#1+1,#2+1,#3);
  \draw (#1,#2+1,#3-1)--(#1+.33,#2+1,#3-1) (#1+.66,#2+1,#3-1)--(#1+1,#2+1,#3-1);
  \draw (#1,#2,#3)--(#1,#2+.33,#3) (#1,#2+.66,#3)--(#1,#2+1,#3);
  \draw (#1,#2,#3-1)--(#1,#2+.33,#3-1) (#1,#2+.66,#3-1)--(#1,#2+1,#3-1);
  \draw (#1+1,#2,#3)--(#1+1,#2+.33,#3) (#1+1,#2+.66,#3)--(#1+1,#2+1,#3);
  \draw (#1+1,#2,#3-1)--(#1+1,#2+.33,#3-1) (#1+1,#2+.66,#3-1)--(#1+1,#2+1,#3-1);
  \draw (#1,#2+1,#3)--(#1,#2+1,#3-.33) (#1,#2+1,#3-.66)--(#1,#2+1,#3-1);
  \draw (#1+1,#2+1,#3)--(#1+1,#2+1,#3-.33) (#1+1,#2+1,#3-.66)--(#1+1,#2+1,#3-1);
  \draw (#1,#2,#3)--(#1,#2,#3-.33) (#1,#2,#3-.66)--(#1,#2,#3-1);
  \draw (#1+1,#2,#3)--(#1+1,#2,#3-.33) (#1+1,#2,#3-.66)--(#1+1,#2,#3-1);
  \draw[dotted,thick] (#1+.33,#2,#3)--(#1+.66,#2,#3);
  \draw[dotted,thick] (#1+.33,#2,#3-1)--(#1+.66,#2,#3-1);
  \draw[dotted,thick] (#1+.33,#2+1,#3)--(#1+.66,#2+1,#3);
  \draw[dotted,thick] (#1+.33,#2+1,#3-1)--(#1+.66,#2+1,#3-1);
  \draw[dotted,thick] (#1,#2+.33,#3)--(#1,#2+.66,#3);
  \draw[dotted,thick] (#1,#2+.33,#3-1)--(#1,#2+.66,#3-1);
  \draw[dotted,thick] (#1+1,#2+.33,#3)--(#1+1,#2+.66,#3);
  \draw[dotted,thick] (#1+1,#2+.33,#3-1)--(#1+1,#2+.66,#3-1);
  \draw[dotted,thick] (#1,#2+1,#3-.33)--(#1,#2+1,#3-.66);
  \draw[dotted,thick] (#1+1,#2+1,#3-.33)--(#1+1,#2+1,#3-.66);
  \draw[dotted,thick] (#1,#2,#3-.33)--(#1,#2,#3-.66);
  \draw[dotted,thick] (#1+1,#2,#3-.33)--(#1+1,#2,#3-.66);
}

\usetikzlibrary{decorations.pathreplacing,calc}

\def\NZQ{\Bbb}               
\def\NN{{\NZQ N}}

\def\RR{{\NZQ R}}

\def\frk{\frak}               

\def\Phi{{\frk n}}
\def\Phi{{\frk N}}

\def\ML{{\mathcal L}}
\def\MP{{\mathcal P}}
\def\MQ{{\mathcal Q}}

\def\MS{{\mathcal S}}
\def\MG{{\mathcal G}}

\def\opn#1#2{\def#1{\operatorname{#2}}} 
\opn\chara{char} \opn\length{\ell} \opn\pd{pd} \opn\rk{rk}
\opn\projdim{proj\,dim} \opn\injdim{inj\,dim} \opn\rank{rank}
\opn\depth{depth} \opn\grade{grade} \opn\height{height}\opn\coheight{coheight}
\opn\embdim{emb\,dim} \opn\codim{codim}

\opn\Tr{Tr} \opn\bigrank{big\,rank}
\opn\superheight{superheight}\opn\lcm{lcm}
\opn\trdeg{tr\,deg}
\opn\reg{reg} \opn\lreg{lreg} \opn\ini{in} \opn\lpd{lpd}
\opn\size{size}\opn\bigsize{bigsize}
\opn\cosize{cosize}\opn\bigcosize{bigcosize}
\opn\sdepth{sdepth}\opn\sreg{sreg}
\opn\lk{lk}
\opn\del{del}
\opn\fdepth{fdepth}\opn\type{type}
\opn\GL{GL} 

\opn\div{div} \opn\Div{Div} \opn\cl{cl} \opn\Cl{Cl}
\opn\Spec{Spec} \opn\Supp{Supp} \opn\supp{supp} \opn\Sing{Sing}
\opn\Ass{Ass} \opn\Min{Min}\opn\Mon{Mon} \opn\dstab{dstab} \opn\astab{astab}
\opn\Syz{Syz}
\opn\Ann{Ann} \opn\Rad{Rad} \opn\Soc{Soc}
\opn\Im{Im} \opn\ker{ker} \opn\Coker{Coker} \opn\Am{Am}
\opn\Hom{Hom} \opn\Tor{Tor} \opn\Ext{Ext} \opn\End{End}
\opn\Aut{Aut} \opn\id{id}

\opn\nat{nat}
\opn\pff{pf}
\opn\Pf{Pf} \opn\GL{GL} \opn\SL{SL} \opn\mod{mod} \opn\ord{ord}
\opn\Gin{Gin} \opn\Hilb{Hilb}\opn\sort{sort}
\opn\Proj{Proj}
\opn\aff{aff} \opn\con{conv} \opn\relint{relint} \opn\st{st}
\opn\lk{lk} \opn\cn{cn} \opn\core{core} \opn\vol{vol}
\opn\link{link} \opn\star{star}\opn\lex{lex}
\opn\gr{gr}

\newcommand{\qand}{\quad \mbox{and} \quad}

\newcommand{\qforall}{\quad \mbox{for all} \quad}

\newcommand{\qwhere}{\quad \mbox{where} \quad}

\opn\dirlim{\underrightarrow{\lim}}
\opn\inivlim{\underleftarrow{\lim}}
{}

\let\dirsum=\oplus

\let\to=\rightarrow

\def\Implies{\ifmmode\Longrightarrow \else
	\unskip${}\Longrightarrow{}$\ignorespaces\fi}
\def\implies{\ifmmode\Rightarrow \else
	\unskip${}\Rightarrow{}$\ignorespaces\fi}
\def\iff{\ifmmode\Longleftrightarrow \else
	\unskip${}\Longleftrightarrow{}$\ignorespaces\fi}

\let\:=\colon
\theoremstyle{plain}
\newtheorem{Theorem}{Theorem}[section]
\newtheorem{Lemma}[Theorem]{Lemma}
\newtheorem{lemma}[Theorem]{Lemma}
\newtheorem{Setup}[Theorem]{Setup}
\newtheorem{Corollary}[Theorem]{Corollary}
\newtheorem{Proposition}[Theorem]{Proposition}
\newtheorem{Remark}[Theorem]{Remark}

\newtheorem{Example}[Theorem]{Example}

\newtheorem{Definition}[Theorem]{Definition}

\newtheorem{Notation}[Theorem]{Notation}

\title{Polycubes and polycube ideals}

\author{Ayesha Asloob Qureshi}
\address{Sabanci University, Faculty of Engineering and Natural Sciences, Orta Mahalle, Tuzla 34956, Istanbul, Turkey}
\email{aqureshi@sabanciuniv.edu, ayesha.asloob@sabanciuniv.edu}

\author {Dharm Veer}
\address{Department of Mathematics, Purdue University, 150 N. University Street, West Lafayette, IN 47907-2067, USA}
\email{dveer@purdue.edu}

\keywords{polycube, polycube ideal, Gr\"obner basis, Cohen--Macaulay, vertex decomposable, Hibi ring}
\subjclass[2020]{13F65, 13C14, 13P10, 05E40}

\begin{document}

\begin{abstract} 
 To a polycube $\MP$ in $\mathbb R^n$, we associate an ideal $I_\MP$ generated by the join--meet relations of vertices that are corners of a cuboid of $\MP$. For $n=2$ these are the polyomino ideals. In contrast with the planar case, we show that for $n\ge3$ the ideal $I_\MP$ is prime if and only if $\MP$ is a cuboid. For convex polycubes in $\mathbb R^3$ of uniform thickness whose vertex set is a sublattice of $\mathbb N^3$, the defining relations form a quadratic Gr\"obner basis, and we characterize the Cohen--Macaulay ones in terms of a poset on the vertices of $\MP$; in this case the order complex of the poset is vertex decomposable. Finally, as an application, we show  that the polycube algebra of a thickened
$L$-convex polyomino is Koszul, and we characterize when it is Cohen--Macaulay.
\end{abstract}

	\maketitle	

\section{Introduction}

Polytopal, combinatorial, and poset-theoretic structures have played an important role in the study of toric and binomial ideals. A fundamental example is provided by defining ideals of Hibi rings. Let $L$ be a
finite distributive lattice and $S=K[x_\alpha:\alpha\in L]$ be a polynomial ring over a field $K$. The defining ideal of the Hibi ring associated to $L$ is generated
by the join--meet relations
\[
x_\alpha x_\beta
-
x_{\alpha\wedge\beta}x_{\alpha\vee\beta},
\qquad
\alpha,\beta\in L \text{ incomparable}.
\]
Hibi \cite{HIBI87} showed that the resulting algebra is a normal Cohen--Macaulay domain. Moreover, these quadratic relations form a Gr\"obner basis with respect to a suitable reverse lexicographic order (see \cite[Theorem~6.19]{HHO18}), and the corresponding initial ideal is the Stanley--Reisner ideal of the order complex of $L$.

In this paper, we ask what remains of this picture when the join--meet relations are imposed only \emph{locally}. A polycube $\MP$ is a finite face-connected collection of unit-cubes in $\mathbb R^n$, and its vertex set $V(\MP) = \MP \cap\NN^n$ inherits the coordinatewise order from $\NN^n$. Polycubes are natural higher-dimensional generalizations of polyominoes and have been studied extensively in combinatorics, particularly from the viewpoint of enumeration; see, for example, \cite{BBRpolycube, ABpolycube, MMR22} and the references therein.

An interval $I\subseteq\mathbb R^n$ is called a \emph{cuboid} of $\mathcal P$ if every unit-cube contained in $I$ belongs to $\mathcal P$. Given a polycube $\mathcal P$, we impose the join--meet relation for two incomparable vertices only when they belong to the corner lattice of a common cuboid contained in $\mathcal P$. The resulting ideal $I_{\mathcal P}$ is called the \emph{polycube ideal}, and $K[\mathcal P]=S/I_{\mathcal P}$ the corresponding \emph{polycube algebra}.

This construction simultaneously extends two familiar settings. For $n=2$, it recovers the polyomino ideals introduced in \cite{Qur12}, while if $\mathcal P$ itself is a cuboid, then $K[\mathcal P]$ is the Hibi ring associated to a product of chains. Thus polycube ideals interpolate between polyomino ideals and Hibi-type join--meet ideals. The behaviour between these two extremes, however, changes sharply with the dimension. In the plane, large classes of polyomino ideals are prime; in particular, the polyomino ideal of every simple polyomino is prime \cite{HM14,QSS17}. In contrast, in dimension at least three primeness is completely rigid.

\medskip
\noindent
\textbf{Theorem A (\cref{pro:primepolycube}).}
Let $\mathcal P$ be a polycube in $\mathbb R^n$, where $n\geq3$.
Then $I_{\mathcal P}$ is prime if and only if $\mathcal P$ is a
cuboid.
\medskip

Having established this sharp obstruction to primeness, we turn in dimension three to two properties that survive much more widely: quadratic Gr\"obner bases and Cohen--Macaulayness. We consider a natural class of convex polycubes whose vertex sets are sublattices of $\mathbb N^3$ and whose nonempty $y$-columns all have the same height $m$, see \Cref{setup:polycube}. For every polycube in this class, we prove that the local join--meet relations form a quadratic Gr\"obner basis with respect to the reverse lexicographic order used for Hibi rings (\cref{thm:GB}). In particular, $K[\mathcal P]$ is Koszul.

The corresponding initial ideal admits a useful combinatorial description. We construct a poset $Q_{\mathcal P}$ on $V(\mathcal P)$ by adding comparabilities between certain pairs of vertices which do not belong to the vertex lattice of a common cuboid. We prove that $\operatorname{in}(I_{\mathcal P}) = I_{\Delta(Q_{\mathcal P})}$, where $\Delta(Q_{\mathcal P})$ denotes the order complex of $Q_{\mathcal P}$. This allows us to translate Cohen--Macaulayness of the polycube algebra into a question about
the topology of an explicitly defined poset.

To describe the resulting criterion, let $\mathcal L_{\mathcal P}$ denote the set of vertices occurring in
cover relations of $Q_{\mathcal P}$ which are incomparable in the coordinatewise order on $\mathbb N^3$. Using
the theorem of Conca and Varbaro~\cite{CV20squarefree}, together with a detailed analysis of the order structure of $Q_{\mathcal P}$, we obtain the following characterization.

\medskip
\noindent
\textbf{Theorem B (\cref{thm:main}).}
Let $\mathcal P$ be a polycube satisfying \Cref{setup:polycube}. The following
conditions are equivalent:
\begin{enumerate}
    \item $K[\mathcal P]$ is Cohen--Macaulay for any field $K$;
    \item $\Delta(Q_{\mathcal P})$ is Cohen--Macaulay;
    \item $\Delta(Q_{\mathcal P})$ is vertex decomposable;
    \item $\mathcal P$ is a cuboid, or $m=1$ and
    $\mathcal L_{\mathcal P}$ is a chain in $Q_{\mathcal P}$.
\end{enumerate}
\medskip

Finally, we extend our results to $L$-convex polycubes. Using the construction of \cite{EHQR21}, we show that their polycube ideals
admit quadratic Gr\"obner bases and their polycube algebras are Koszul. We also obtain a complete characterization of when these
coordinate rings are Cohen--Macaulay (\cref{thm:LconvexCM}).

\section{Preliminaries}    

\subsection{Abstract simplicial complexes}
Let $K$ be a field and $\Delta$ be an abstract simplicial complex. Let $V(\Delta)$ denote the \emph{vertex set} of $\Delta$.
The elements of $\Delta$ are called \emph{faces}.
The dimension of a face $\sigma$, denoted by $\dim(\sigma)$, is $\# \sigma -1$. The dimension of $\Delta$ is defined as $\max\{ \dim(\sigma) \colon \sigma \in \Delta\}$.  The maximal faces of $\Delta$ with respect to inclusion are called \emph{facets}. $\Delta$ is called \emph{pure} if all its facets have the same dimension.
For a face $\sigma$ in $\Delta$, define the following subcomplexes of $\Delta$:
\begin{itemize}
    \item The \emph{link} of $\sigma$ in $\Delta$, denoted by 
    $\link_\Delta(\sigma)$, as $\{\tau \in \Delta: \sigma\cap \tau = \emptyset \qand \sigma \cup \tau \in \Delta\}$.
    \item The \emph{deletion} of $\sigma$ in $\Delta$, denoted by 
    $\del_\Delta(\sigma)$, as $\{\tau \in \Delta: \sigma\cap \tau = \emptyset \}$.
\end{itemize}
The \emph{join} of two simplicial complexes $\Delta_1$ and $\Delta_2$ on disjoint vertex sets, denoted by $\Delta_1 * \Delta_2$, is the simplicial complex whose faces are disjoint unions of faces of $\Delta_1$ and $\Delta_2$.   

Let $S = K[x_v \colon v\in V(\Delta)]$ be a polynomial ring. The \emph{Stanley--Reisner ideal} of $\Delta$ is the monomial ideal
\[
I_\Delta = \big( \prod_{v \in  \sigma} x_v \colon \sigma \subseteq V(\Delta) \qand \sigma \quad \text{is not a face of} \quad \Delta \big)\,.
\]
The quotient ring $K[\Delta] = S/I_\Delta$ is called the \emph{Stanley--Reisner ring} of $\Delta$ over $K$.

A simplicial complex $\Delta$ is said to be \emph{Cohen--Macaulay over a field $K$} if for every face $\sigma\in \Delta$ one has that $\Tilde{H}_i(\link_\Delta{\sigma}; K) = 0$ for all $i < \dim(\link_\Delta{\sigma})$. 
A simplicial complex is called \emph{Cohen--Macaulay} if it is Cohen--Macaulay over every field $K$.
Reisner~\cite{ReisnerCMrings} showed that this is equivalent to $K[\Delta]$ being Cohen--Macaulay. It is well known that if $\Delta$ is Cohen--Macaulay over $K$, then $\Delta$ is pure.

A vertex $x$ of a simplicial complex $\Delta$ is called a \emph{shedding vertex} if every facet of $\del_{\Delta}(x)$ is also a facet of $\Delta$. A pure simplicial complex $\Delta$ is said to be \emph{vertex decomposable} if one of the following conditions holds:
\begin{enumerate}
    \item $\Delta = \{\emptyset\}$, or $\Delta$ is a simplex; or
    \item there exists a shedding vertex $x\in V(\Delta)$ such that both $\link_\Delta(x)$ and $\del_\Delta(x)$ are vertex decomposable.
\end{enumerate} 
Vertex decomposable simplicial complexes were introduced by Provan and Billera~\cite{PB80vertexdecomposable}, who showed
that vertex decomposable simplicial complexes are shellable, and hence Cohen--Macaulay~\cite[Theorem 5.1.13]{BH93}. 

\subsection{Poset theory}
Throughout the paper, all posets will be finite.
Let $P$ be a poset with order relation $\leq$. 
Let $x,y\in P$.
We say the element $y$ \emph{covers} $x$, denoted by $x\lessdot y$, if $x< y$ and there is no element $z$ in $P$ such that $x <z<y$.
Elements $x$ and $y$ are said to be \emph{comparable} if $x\leq y$ or $y \leq x$; otherwise they are called \emph{incomparable}.
If $x\leq y$, then the \emph{open interval} $(x,y)_P$ is the subposet $\{z\in P: x < z < y\}$ of $P$. Similarly, one defines $(x,y]_P$, $[x,y)_P$, and the \emph{closed interval} $[x,y]_P$. 
The poset $P$ is \emph{bounded} if it contains a unique minimal element and a unique maximal element. 
The \emph{order dual} of $P$ is the poset $P^*$ on the same set with order relation $\leq_*$ defined by $x\leq_* y$ if and only if $y \leq x$.

A \emph{chain} of $P$ is a totally ordered subset of $P$. The \emph{length} of a chain $C$ is $\#C-1$. 
The \emph{rank} of $P$, denoted by $\rank(P)$, is the maximum length of chains in $P$. 
A poset is called \emph{pure} if all its maximal chains have the same length.  
A chain $x_1 < x_2 < \cdots < x_r$ is called \emph{saturated} if $x_i \lessdot x_{i+1}$ for every $i=1,\ldots,r-1$.
The {\em ordinal sum} $P\dirsum Q$ of two disjoint posets $P$ and $Q$ is the poset on the set $P\cup Q$ with the following order: for $x, y \in P\dirsum Q $, we have $x \leq y$ if either
\begin{itemize}
    \item $x, y \in P$ and $x \leq y$ in $P$, or
    \item $x, y \in Q$ and $x \leq y$ in $Q$, or
    \item $x \in P$ and $y \in Q$.
\end{itemize}  

The order complex $\Delta(P)$ of $P$ is a simplicial complex on the underlying set of $P$ whose faces are chains of $P$. The dimension of $\Delta(P)$ is the rank of $P$.
For the arguments in this article, a criterion for Cohen--Macaulayness in terms of the intervals of the poset will be useful.
Baclawski gave such a local, interval-based characterization
\cite{baclawskicmdefinition80}, later streamlined by Bj\"orner, Garsia, and Stanley
\cite{BGS82} into the following statement.

\begin{Lemma}\label{lem:intervalCM}
    Let $P$ be a bounded poset. Then, $\Delta(P)$ is Cohen--Macaulay over $K$ if and only if for all $x\leq y$ in $P$,
    \[
    \Tilde{H}_i(\Delta\big((x,y)_P \big); K) = 0 \qforall i < \rank\big((x,y)_P\big)\,.
    \]
\end{Lemma}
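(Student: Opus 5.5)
We prove both directions by passing between the order complex of $P$ and links of chains. The key observation is that for a bounded poset $P$ with minimum $\hat 0$ and maximum $\hat 1$, every link in $\Delta(P)$ is again an order complex. If $\sigma=\{x_1<\cdots<x_k\}$ is a chain of $P$, set $x_0=\hat 0$ and $x_{k+1}=\hat 1$. We do this formally, even if $\hat 0$ or $\hat 1$ already lies in $\sigma$. Then
\[
\link_{\Delta(P)}(\sigma)=\Delta([\hat0,x_1)_P\setminus\{\hat 0\}\cup\cdots)\,,
\]
or, stated more carefully, it is the join of the pieces $\Delta((x_{i},x_{i+1})_P)$ for $i=0,\dots,k$. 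The bottom and top pieces must be adjusted to include $\hat0$ and $\hat1$ when these are not in $\sigma$. The cleanest way to avoid these adjustments is to note that $\Delta(P)$ is a cone over $\hat0$ and $\hat1$, namely $\Delta(P)=\{\hat 0\}*\{\hat1\}*\Delta((\hat0,\hat1)_P)$. Links in a cone reduce to links in the base or to the base itself. So it suffices to work with faces containing $\hat0$ and $\hat1$. For such a face the link is exactly the join $\Delta((x_0,x_1))*\cdots*\Delta((x_k,x_{k+1}))$.

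For the direction ``$\Leftarrow$'', assume every open interval has vanishing reduced homology below its rank. We use the Künneth formula for joins over a field:
\[
\tilde H_{r}(A*B;K)\cong\bigoplus_{i+j=r-1}\tilde H_i(A;K)\otimes\tilde H_j(B;K).
\]
Each factor has homology concentrated in its top degree $\rank=\dim$. So the join has homology only in degree $\sum(\dim+1)-1=\dim$ of the join. Empty intervals, where $\Delta=\{\emptyset\}$ has $\tilde H_{-1}=K$ and dimension $-1$, fit this convention automatically. Hence every link has vanishing homology below its dimension. This also covers the links of faces not containing $\hat0,\hat1$ via the cone remark, since links in a cone are cones (hence acyclic) or equal to links in the base. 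So $\Delta(P)$ is Cohen--Macaulay over $K$.

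For the direction ``$\Rightarrow$'', take $x<y$ in $P$. We may assume $x\ne y$, since the case $x=y$ gives the empty interval, which is trivial. Consider the chain $\sigma=\{\hat0,x,y,\hat1\}$, with repetitions removed. Its link is $\Delta((\hat0,x))*\Delta((x,y))*\Delta((y,\hat1))$. By Cohen--Macaulayness of $\Delta(P)$, this link has homology only in top degree. It remains to deduce that the middle factor does as well. The hard step is the case where an outer factor could be acyclic. Cohen--Macaulayness does not rule this out, and then the join carries no information.

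We circumvent this by choosing instead a maximal chain extension. Pick saturated chains $\hat0=z_0\lessdot\cdots\lessdot z_a=x$ and $y=w_0\lessdot\cdots\lessdot w_b=\hat1$, and let $\tau$ be the union of these chains, which contains $x$ and $y$. Every interval in $\tau$ other than $(x,y)$ is then empty. So $\link_{\Delta(P)}(\tau)=\Delta((x,y)_P)$ exactly, since a join with $\{\emptyset\}$ is the identity. Cohen--Macaulayness of $\Delta(P)$ now gives $\tilde H_i(\Delta((x,y)_P);K)=0$ for $i<\dim\Delta((x,y)_P)=\rank((x,y)_P)$, which is the required statement.

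The only step needing real care is the link computation in the previous paragraph, together with the cone reduction used in the ``$\Leftarrow$'' direction. Both are standard. The rest is the Künneth formula for joins.
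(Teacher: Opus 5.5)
Your proof is correct. The paper gives no proof of this lemma; it quotes the result from Baclawski and Bj\"orner--Garsia--Stanley, so your proposal is a self-contained substitute for that citation. It follows the standard argument behind the cited result.

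\begin{itemize}
\item For a chain $F$ containing $\hat 0$ and $\hat 1$, $\link_{\Delta(P)}(F)$ is the join of the order complexes of the open intervals between consecutive elements of $F$.
\item If $F$ misses $\hat 0$ (or $\hat 1$), then $\link_{\Delta(P)}(F)$ is a cone with that apex, hence acyclic.
\item ``$\Leftarrow$'' then follows from the K\"unneth formula for joins. Homology concentrated in top degree is preserved, because $\dim(A*B)=\dim A+\dim B+1$.
\item ``$\Rightarrow$'' follows by padding $\{x,y\}$ with saturated chains $\hat 0\lessdot\cdots\lessdot x$ and $y\lessdot\cdots\lessdot\hat 1$, so that the link is exactly $\Delta((x,y)_P)$.
\end{itemize}

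Compared with the citation, your argument works directly with the link-based definition of Cohen--Macaulayness in the paper's preliminaries. It needs no passage through Stanley--Reisner rings and no separate purity argument, since that definition is phrased with $\dim\link_\Delta(\sigma)$.

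A few clean-ups for the write-up:
\begin{itemize}
\item Delete the garbled first display and the abandoned paragraph about $\sigma=\{\hat 0,x,y,\hat 1\}$; the padding argument makes it unnecessary.
\item Mention the degenerate case $\hat 0=\hat 1$. There the decomposition $\Delta(P)=\{\hat 0\}*\{\hat 1\}*\Delta((\hat 0,\hat 1)_P)$ is not literally valid, but the statement is trivial.
\item Say explicitly that the case $x=y$ is vacuous because of the paper's convention that the empty chain has length $-1$, so $\rank(\emptyset)=-1=\dim\{\emptyset\}$.
\end{itemize}
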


\subsection{Hibi ring} A poset $L$ is called a \textit{lattice} if, for every pair of elements $x,y\in L$, there exists a greatest lower bound, called the \textit{meet} and denoted by $x\wedge y$, and a least upper bound, called the \textit{join} and denoted by $x\vee y$. A lattice $L$ is called \emph{distributive} if for all $x,y,z\in L$, the following distributive laws hold:
\[
x\wedge(y\vee z)=(x\wedge y)\vee(x\wedge z) \qand
x\vee(y\wedge z)=(x\vee y)\wedge(x\vee z)\,.
\]

Let $L$ be a finite distributive lattice. 
Let $S = K[x_{\alpha} : \alpha \in L]$ be a polynomial ring. 
The {\em Hibi ring} associated to $L$ is $S/I_L$, where 
    \[
          I_L =  \big( x_{\alpha}x_{\beta} - x_{\alpha\wedge \beta}x_{\alpha\vee \beta} : \alpha,\beta \in L, \alpha\      \text{and}\ \beta \ \text{are incomparable}\big).
   \]
The ideal $I_L$ is called the {\em join-meet ideal} associated to $L$. A Hibi ring is a Koszul~\cite[Theorem \ 6.19]{HHO18} normal Cohen--Macaulay domain~\cite{HIBI87}.

\section{Polycubes and polycube ideals}\label{sec:polycubes}
In this section, we introduce polycubes and the binomial ideals associated to them, and study the primeness of these ideals. Set $\NN = \{0,1,2,\ldots\}$. Let $n\geq 2$ be an integer. For $a=(a_1,\ldots,a_n)\in\mathbb N^n$, we denote by $a_i$ the $i$-th coordinate of $a$. 

An $n$-dimensional unit-cube in
$\mathbb{R}^n$ is a set of the form
\[
\{(x_1,\ldots,x_n)\in\mathbb{R}^n:
a_r\leq x_r\leq a_r+1\text{ for all }1\leq r\leq n\},
\]
where $a=(a_1,\ldots,a_n)\in\mathbb{N}^n$. The vertex set of a unit-cube
$C$, denoted by $V(C)$, is $C\cap\mathbb{N}^n$. For a unit-cube
\[
C=[(a_1,\ldots,a_n),(a_1+1,\ldots,a_n+1)]
=[a,a+\mathbf{1}]\subseteq\mathbb{R}^n,
\]
the vertex $a=(a_1,\ldots,a_n)$ is called the \emph{lower-left vertex} of $C$. For $a\in\mathbb{N}^n$, we denote by $C(a)$ the unit-cube with lower-left vertex $a$. 

Let $\varepsilon\in\{a_s,a_s+1\}$, for some $1\leq s\leq n$.
An $(n-1)$-dimensional face (or face) of $C(a)$ is a set of the form
\[
\{(x_1,\ldots,x_n)\in\mathbb R^n:
x_s=\varepsilon,\;
a_r\leq x_r\leq a_r+1\text{ for all }r\neq s\}.
\]
Let $\MP$ be a finite collection of $n$-dimensional unit-cubes. The vertex set of $\MP$,
denoted by $V(\MP)$, is
\[
V(\MP)=\bigcup_{C\in \MP}V(C).
\]
Two unit-cubes $C,D\in \MP$ are called \emph{neighbours} if
$C\cap D$ is an $(n-1)$-dimensional face of both $C$ and $D$.
A path from $C$ to $D$ is a sequence of distinct unit-cubes
\[
C=C_0,C_1,\ldots,C_m=D
\]
such that $C_{i-1}$ and $C_i$ are neighbours for every
$1\leq i\leq m$. The non-empty collection $\MP$ of unit-cubes is called an
$n$-dimensional \emph{polycube} if there is a path between any two
unit-cubes of $\MP$. For $n=2$, a polycube is called a \emph{polyomino}.

We identify a finite collection $\MP$ of unit-cubes with the union of its
unit-cubes whenever $\MP$ is viewed as a subset of $\mathbb R^n$.
We say that $\MP$ is \emph{convex} if every line segment parallel to a
coordinate axis whose endpoints belong to $\MP$ is contained in $\MP$.

Let $a,b \in \mathbb{N}^n$ such that $a_r<b_r$ for every $1\leq r\leq n$.
The interval $[a,b]$ in $\RR^n$ is called a \emph{cuboid} (or an \emph{inner interval}) of $\MP$ if
\[
C(u)\in \MP \quad \text{for all}\quad u\in\mathbb{N}^n \text{ with } a\leq u\leq b-\bf{ 1 }.
\]
We say that $\MP$ is a \emph{cuboid} if there exists a cuboid
$C=[a,b]$ of $\MP$ such that
\[
V(\MP)=V(C)=[a,b]\cap\mathbb{N}^n.
\]
Equivalently, the unit-cubes of $\MP$ are precisely the unit-cubes contained in $C$.

We next associate a distributive lattice to each cuboid contained in
$\MP$, which will be used to define the polycube ideal. 
Let $C=[a,b]\subseteq\mathbb R^n$ be a cuboid contained in $P$. We define
\[
L_C=
\{(k_1,\ldots,k_n)\in\mathbb N^n:
k_r\in\{a_r,b_r\}\text{ for all }1\leq r\leq n\}.
\]
Then $L_C$ is a sublattice of $\mathbb N^n$, and hence a finite
distributive lattice. In fact, $L_C$ is isomorphic to the Boolean
lattice $B_n$, and it consists of the $2^n$ corners of $C$. Observe that if $C$ is a unit-cube, then
$L_C=V(C)$; otherwise $L_C \subsetneq V(C)$. The meet and join in $L_C$ are given by the coordinatewise minimum
and maximum, respectively. Thus, if $a,b\in L_C$, then
\[
a\wedge b=
(\min\{a_1,b_1\},\ldots,\min\{a_n,b_n\})
\qand
a\vee b=
(\max\{a_1,b_1\},\ldots,\max\{a_n,b_n\}).
\]
In particular, these operations agree with the meet and join in
$\mathbb N^n$ and therefore do not depend on the choice of a cuboid
containing $a$ and $b$.

\begin{figure}[h]
        \centering
\begin{tikzpicture}[scale=2.5]
\unitcubeBig{0}{0}{0};
    
\node [draw = blue, circle, fill=blue, inner sep=1pt, label=left:{\tiny{{$(a_1,a_2,a_3)$}}}] at (0,0,0) {};
\node [draw = blue, circle, fill=blue, inner sep=1pt, label=left:{\tiny{{$(a_1,b_2,a_3)$}}}] at (0,0,-1) {};
\node [draw = blue, circle, fill=blue, inner sep=1pt, label=left:{\tiny{{$(a_1,a_2,b_3)$}}}] at (0,1,0) {};
\node [draw = blue, circle, fill=blue, inner sep=1pt, label=left:{\tiny{{$(a_1,b_2,b_3)$}}}] at (0,1,-1) {};

\node [draw = blue, circle, fill=blue, inner sep=1pt, label=right:{\tiny{{$(b_1,b_2,a_3)$}}}] at (1,0,-1) {};
\node [draw = blue, circle, fill=blue, inner sep=1pt, label=right:{\tiny{{$(b_1,a_2,b_3)$}}}] at (1,1,0) {};  
\node [draw = blue, circle, fill=blue, inner sep=1pt, label=right:{\tiny{{$(b_1,b_2,b_3)$}}}] at (1,1,-1) {};
\node [draw = blue, circle, fill=blue, inner sep=1pt, label=right:{\tiny{{$(b_1,a_2,a_3)$}}}] at (1,0,0) {};
\end{tikzpicture}
        \caption{For a cuboid $C = [(a_1,a_2,a_3), (b_1,b_2,b_3)] \subset \RR^3$, the vertices belonging to $L_C$ are shown in blue.}\label{fig:cuboidinN3} 
\end{figure}

Let $S=K[x_a:a\in V(\MP)]$ be a polynomial ring over a field $K$. For a cuboid $C$ contained in
$\MP$, recall that $L_C$ is a finite distributive lattice. We associate
to $L_C$ the set
\[
\mathcal G_{L_C}
=
\{f_{a,b}=x_ax_b-x_{a\wedge b}x_{a\vee b}:
a,b\in L_C \text{ are incomparable}\}.
\]
By \cite[Theorem~6.19]{HHO18}, $\mathcal G_{L_C}$ is a Gröbner
basis of the defining ideal $J_{L_C}$ of the Hibi ring associated
to $L_C$ with respect to a suitable reverse lexicographic order.

For a finite collection $\MP$ of unit-cubes, set
\[
\mathcal G_\MP
=
\bigcup_{\substack{C\text{ a cuboid}\\ \text{contained in }\MP}}
\mathcal G_{L_C}, 
\]
and let $I_\MP= (\mathcal G_\MP)\subseteq S$. Thus, $I_\MP$ is generated by all binomials $f_{a,b}$ for which $a$ and $b$ are incomparable elements of $L_C$ for some cuboid $C$ contained in $\MP$. 
When $\MP$ is a polycube, $I_\MP$ is called the \emph{polycube ideal}
of $\MP$, and $K[\MP]=S/I_\MP$ is called the \emph{polycube algebra} associated to $\MP$. When $n=2$,
this construction agrees with the polyomino ideal and the associated
polyomino algebra introduced in \cite{Qur12}.

We first illustrate the construction in the simplest case of a single unit-cube.

\begin{Example}\label{ex:unitcube}
Let $\MP=\{C(0,0,0)\}\subseteq\mathbb R^3$, as shown in Figure~\ref{fig:example}. 

\begin{figure}[h]
        \centering
\begin{tikzpicture}[scale=2]
\unitcube{0}{0}{0};
    
\node [draw, circle, fill=black, inner sep=.5pt, label=left:{\tiny{{$(0,0,0)$}}}] at (0,0,0) {};
\node [draw, circle, fill=black, inner sep=.5pt, label=left:{\tiny{{$(0,1,0)$}}}] at (0,0,-1) {};
\node [draw, circle, fill=black, inner sep=.5pt, label=left:{\tiny{{$(0,0,1)$}}}] at (0,1,0) {};
\node [draw, circle, fill=black, inner sep=.5pt, label=left:{\tiny{{$(0,1,1)$}}}] at (0,1,-1) {};

\node [draw, circle, fill=black, inner sep=.5pt, label=right:{\tiny{{$(1,1,0)$}}}] at (1,0,-1) {};
\node [draw, circle, fill=black, inner sep=.5pt, label=right:{\tiny{{$(1,0,1)$}}}] at (1,1,0) {};  
\node [draw, circle, fill=black, inner sep=.5pt, label=right:{\tiny{{$(1,1,1)$}}}] at (1,1,-1) {};
\node [draw, circle, fill=black, inner sep=.5pt, label=right:{\tiny{{$(1,0,0)$}}}] at (1,0,0) {};
\end{tikzpicture}
        \caption{The unit-cube $C(0,0,0)$ and its vertices.}\label{fig:example} 
\end{figure}

Then $\MP$ is a polycube consisting of a single unit-cube with
\[
V(\MP)=
\{(0,0,0),(1,0,0),(0,1,0),(1,1,0),
(0,0,1),(1,0,1),(0,1,1),(1,1,1)\}.
\]
Since $L_{C(0,0,0)}=V(\MP)$, the ideal $I_\MP$
coincides with the defining ideal of the Hibi ring associated to
$V(\MP)$. For brevity, write $x_{ijk}=x_{(i,j,k)}$. The set $\mathcal G_\MP$ consists of the following nine
binomials:
\[
\begin{aligned}
&x_{001}x_{010}-x_{000}x_{011}, \qquad
x_{001}x_{100}-x_{000}x_{101}, \qquad
x_{010}x_{100}-x_{000}x_{110},\\
&x_{011}x_{110}-x_{010}x_{111}, \qquad
x_{101}x_{110}-x_{100}x_{111}, \qquad
x_{011}x_{101}-x_{001}x_{111},\\
&x_{010}x_{101}-x_{000}x_{111}, \qquad
x_{001}x_{110}-x_{000}x_{111}, \qquad
x_{011}x_{100}-x_{000}x_{111}.
\end{aligned}
\]
\end{Example}

The following lemma identifies a local configuration whose presence forces the polycube ideal to be non-prime. For convenience, let $\mathbf 0=(0,\ldots,0)\in\mathbb N^n$, and
for $1\leq i\leq n$, let $e_i$ denote the $i$th standard basis
vector of $\mathbb R^n$.

\begin{Lemma}\label{lem:not-prime}
Let $\MP$ be a polycube in $\mathbb R^n$, where $n\geq 3$, such that $C(\mathbf 0), C(e_1), C(e_1+e_3)$ belong to $\MP$, while $C(e_3)\notin \MP.$ Then $I_\MP$ is not prime.
\end{Lemma}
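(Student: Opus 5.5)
The plan is to exhibit an explicit zero divisor in $K[\MP]$. The missing cube $C(e_3)$ should block the join--meet relation between two corners, while the relation can still be recovered after multiplying by a single variable.

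Write $x_{ijk}$ for $x_{(i,j,k)}$, meaning $x_{ie_1+je_2+ke_3}$; all remaining coordinates are $0$. Consider the incomparable vertices $(0,1,1)$ and $(1,0,2)$, whose meet is $(0,0,1)$ and whose join is $(1,1,2)$. All four are vertices of $C(\mathbf 0)$ or $C(e_1+e_3)$. Set
\[
f = x_{011}x_{102}-x_{001}x_{112}.
\]
The first step is to show that $x_{111}f\in I_\MP$, using only relations coming from unit-cubes in $\MP$.
\begin{itemize}
\item In $C(e_1+e_3)=[(1,0,1),(2,1,2)]$, the vertices $(1,0,2)$ and $(1,1,1)$ are incomparable, with meet $(1,0,1)$ and join $(1,1,2)$. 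This gives $x_{102}x_{111}\equiv x_{101}x_{112}$.
\item In $C(\mathbf 0)$ we have $x_{011}x_{101}\equiv x_{001}x_{111}$.
\item Hence $x_{111}x_{011}x_{102}\equiv x_{011}x_{101}x_{112}\equiv x_{001}x_{111}x_{112}$ modulo $I_\MP$, that is, $x_{111}f\in I_\MP$.
\end{itemize}
Strictly speaking, $C(e_1)$ is not needed for this computation. Its role is that together with $C(e_3)\notin\MP$ it fixes the geometry, and the two cubes used above are given.

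The second step is to check that $x_{111}\notin I_\MP$ and $f\notin I_\MP$. Since $I_\MP$ is generated by homogeneous quadrics, it contains no linear forms, so $x_{111}\notin I_\MP$. For $f$, the degree-$2$ component of $I_\MP$ is the $K$-span of $\mathcal G_\MP$, so it suffices that the monomial $x_{011}x_{102}$ occurs in no generator $f_{a,b}$. That monomial is neither $x_{a\wedge b}x_{a\vee b}$ (its factors are incomparable) nor $x_ax_b$ for a generator.

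The main point, and the only real obstacle, is the following claim: no cuboid $[a,b]$ of $\MP$ has both $(0,1,1)$ and $(1,0,2)$ in $L_{[a,b]}$. Suppose such a cuboid existed. Then $a\le(0,0,1)$ and $b\ge(1,1,2)$ coordinatewise, and $a_r<b_r$ for all $r$. Hence $a\le e_3\le b-\mathbf 1$, so the definition of a cuboid forces $C(e_3)\in\MP$, a contradiction.

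One must also rule out the possibility that $x_{011}x_{102}$ appears as the product $x_{a\wedge b}x_{a\vee b}$ of some generator. This is impossible because these two vertices are incomparable, whereas $a\wedge b\le a\vee b$.

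Therefore $f\notin I_\MP$ and $x_{111}\notin I_\MP$, while their product lies in $I_\MP$. So $I_\MP$ is not prime.
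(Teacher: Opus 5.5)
Your proof is correct and is essentially the paper's argument. You show that the same binomial $x_{011}x_{102}-x_{001}x_{112}$ is a zero divisor using the same two generators from $C(\mathbf 0)$ and $C(e_1+e_3)$, with multiplier $x_{111}$ in place of the paper's $x_{101}$ (both work). The one other difference is that for $n>3$ you keep the extra coordinates at $0$ and show directly that any cuboid having both vertices as corners contains $C(e_3)$, whereas the paper pads with $\eta$ so that the two vertices differ in every coordinate, which forces the cuboid to be $C(e_3)$ itself.
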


\begin{proof}
Set $p=e_3, \eta=\sum_{\substack{2\leq i\leq n\\i\neq3}}e_i,$
and
\[
u=p+\eta,\qquad
v=p+e_1,\qquad
q=p+e_1+\eta,\qquad
w=v+e_3,\qquad r=q+e_3\,.
\]
The vertices $u$ and $v$ are incomparable elements of
$L_{C(\mathbf 0)}$, with $u\wedge v=p, u\vee v=q.$ Hence $f=x_ux_v-x_px_q\in\mathcal G_\MP.$ Similarly, $q$ and $w$ are incomparable elements of
$L_{C(v)}$, with $q\wedge w=v, q\vee w=r.$ Thus, $g=x_qx_w-x_vx_r\in\mathcal G_\MP.$ Consequently,
\[
x_wf+x_pg
=x_v(x_ux_w-x_px_r)\in I_\MP.
\]

\begin{figure}[htbp]
\centering
\begin{tikzpicture}[scale=2]

\unitcube{0}{0}{0};
\unitcube{1}{0}{0};
\unitcube{1}{1}{0};

\node[draw,circle,fill=black,inner sep=.5pt,
      label=left:{\tiny $p=(0,0,1)$}] at (0,1,0) {};

\node[draw,circle,fill=blue,inner sep=.5pt,
      label=left:{\tiny $u=(0,1,1)$}] at (0,1,-1) {};

\node[draw,circle,fill=blue,inner sep=.5pt,
      label=below:{\tiny $v=(1,0,1)$}] at (1,1,0) {};

\node[draw,circle,fill=black,inner sep=.5pt,
      label=above:{\tiny $q=(1,1,1)$}] at (1,1,-1) {};

\node[draw,circle,fill=blue,inner sep=.5pt,
      label=left:{\tiny $w=(1,0,2)$}] at (1,2,0) {};

\node[draw,circle,fill=black,inner sep=.5pt,
      label=left:{\tiny $r=(1,1,2)$}] at (1,2,-1) {};

\node [draw, circle, fill=black, inner sep=.5pt, label=left:{\tiny{{$(0,0,0)$}}}] at (0,0,0) {};
\end{tikzpicture}
\caption{The configuration of Lemma~\ref{lem:not-prime} in dimension $3$. }
\end{figure}
Set $h=x_ux_w-x_px_r.$ The vertices $u$ and $w$ differ in every coordinate. Hence, if
$u,w\in L_D$ for some cuboid $D\subseteq P$, then necessarily
\[
D=[u\wedge w,u\vee w]=[p,r]=C(e_3),
\]
contrary to the assumption that $C(e_3)\notin P$. Therefore
$x_ux_w$ does not occur in any element of $\mathcal G_\MP$. Since
$I_\MP$ is generated by homogeneous quadrics, it follows that
$h\notin I_\MP$. Moreover, $x_v\notin I_\MP$, since $I_\MP$ is generated by homogeneous
quadrics. Thus $x_vh\in I_\MP, x_v\notin I_\MP, h\notin I_\MP,$ and therefore $I_\MP$ is not prime.
\end{proof}

Next we characterize the polycubes whose polycube
ideals are prime.

\begin{Proposition}\label{pro:primepolycube}
Let $\MP$ be a polycube in $\RR^n$, where $n\geq 3$. Then
$I_{\MP}$ is prime if and only if $\MP$ is a cuboid.
\end{Proposition}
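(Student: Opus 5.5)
The plan is to prove the two implications separately. The backward direction reduces to Hibi's theorem. The forward direction reduces to \cref{lem:not-prime} via the symmetries of $\mathbb Z^n$.

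\emph{$\MP$ a cuboid $\Rightarrow$ $I_\MP$ prime.} Let $\MP=[a,b]$, so $V(\MP)=[a,b]\cap\NN^n$ is a product of chains, hence a distributive lattice $L$. I will show $I_\MP=I_L$ and then invoke \cite{HIBI87}.
\begin{itemize}
\item The inclusion $I_\MP\subseteq I_L$ holds because every $f_{\alpha,\beta}\in\mathcal G_\MP$ is a join--meet relation of the sublattice $V(\MP)$.
\item For the reverse inclusion, let $\alpha,\beta\in V(\MP)$ be incomparable. I will build a cuboid $D$ of $\MP$ with $\alpha,\beta\in L_D$.
\item In each coordinate $r$ with $\alpha_r\neq\beta_r$, take $D$ to have endpoints $\min$ and $\max$ of $\{\alpha_r,\beta_r\}$.
\item In each coordinate with $\alpha_r=\beta_r=c$, take $D$ to have endpoints $\{c,c+1\}$ if $c<b_r$, and $\{c-1,c\}$ otherwise. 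This is possible since $a_r<b_r$.
\item Then $D\subseteq[a,b]$, so every unit-cube in $D$ lies in $\MP$, and $\alpha,\beta$ are corners of $D$. Hence $f_{\alpha,\beta}\in\mathcal G_{L_D}\subseteq \mathcal G_\MP$.
\end{itemize}

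\emph{$I_\MP$ prime $\Rightarrow$ $\MP$ a cuboid.} I argue by contraposition, and first need \cref{lem:not-prime} to apply to any position and orientation of its configuration. Let $\mathcal H$ be the group generated by translations, coordinate permutations and coordinate reflections. I will check that primeness of $I_\MP$ is invariant under $\mathcal H$ acting on $\MP$ (after translating back into $\NN^n$).
\begin{itemize}
\item Translations and permutations obviously preserve $\mathcal G_\MP$.
\item For a reflection, the key observation concerns a single cuboid $C$. Since $\alpha+\beta=(\alpha\wedge\beta)+(\alpha\vee\beta)$, the ideal $(\mathcal G_{L_C})$ equals the span of all binomials $x_\alpha x_\beta-x_\gamma x_\delta$ with $\alpha,\beta,\gamma,\delta\in L_C$ and $\alpha+\beta=\gamma+\delta$.
\item Indeed, any two such monomials are both congruent to $x_{\alpha\wedge\beta}x_{\alpha\vee\beta}$, because the meet and join are determined by the multiset of coordinates.
\item This description is visibly invariant under reflecting a coordinate.
\end{itemize}
Consequently, \cref{lem:not-prime} applies whenever $\MP$ contains cubes $C(x)$, $C(x+se_i)$, $C(x+se_i+te_j)$ with $C(x+te_j)\notin\MP$, for some $i\neq j$ and signs $s,t\in\{\pm1\}$. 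Here $n\ge3$ guarantees that a third coordinate is available to play the role of $\eta$.

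\emph{Combinatorial step.} Let $S\subseteq\mathbb Z^n$ be the set of lower-left vertices of cubes of $\MP$; it is connected under unit steps. Suppose no such ``missing corner'' configuration exists. I must show that $S$ is a box, since then $\MP$ is a cuboid. The plan has three parts.
\begin{itemize}
\item Rewrite the hypothesis as follows: if $x,x+te_j\in S$ and $x+se_i\in S$ with $i\neq j$, then $x+se_i+te_j\in S$. (This is the absence of the configuration applied at the base point $x+se_i$.)
\item Hence, for fixed $j$ and $t$, the property ``$x+te_j\in S$'' propagates from $x$ along any step of $S$ in a direction $\neq j$.
\item From this, show that every slice $S\cap\{x_j=c\}$ is either empty or a translate of a fixed set, and that nonempty slices occur for consecutive values of $c$. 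Then by induction on $n$ (within a slice the same no-missing-corner condition holds), $S$ is a product of intervals.
\end{itemize}

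The main obstacle is this last part. The propagation only runs along paths inside $S$, so I must show the slices are connected, or else handle several components at once. I would first try to prove slice-connectivity directly: take a path in $S$ between two points of a slice, and push each excursion out of the slice back into it, repeatedly using the propagation rule to shorten the path. If that gets messy, the fallback is to argue extremally. Take a cube $x\in S$ and a direction $e$ with $x+e\notin S$ while $x+e$ lies in the bounding box. Then follow a shortest path in $S$ from $x$ to some cube beyond the hyperplane $\{y_k=x_k+1\}$, where $k$ is the coordinate of $e$. The first turn of this path toward that hyperplane should produce exactly the forbidden configuration.
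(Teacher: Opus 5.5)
Your backward direction is correct, and so is your reduction to Lemma~\ref{lem:not-prime}. Your check that reflections preserve $I_{\mathcal P}$ (describing $(\mathcal G_{L_C})$ by the binomials $x_\alpha x_\beta-x_\gamma x_\delta$ with $\alpha+\beta=\gamma+\delta$) is more careful than the paper, which reverses coordinate directions without comment. The gap is the combinatorial step: showing that a connected $S$ with no missing corner is a box. This is the real content of the forward direction, and you leave it as a plan. Your primary route needs every slice $S\cap\{x_j=c\}$ to be connected, both for the ``translate of a fixed set'' claim and for the induction on $n$. This is not a side issue. A disconnected (e.g.\ U-shaped) slice is exactly what the no-missing-corner condition is supposed to exclude, so slice-connectivity is essentially the original statement again. ``Pushing excursions back into the slice'' is not yet an argument.

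Your fallback can be made to work, but as stated it does not close. Let $x=p_0,\dots,p_\ell$ be a shortest path in $S$ into the half-space $\{y_k\ge x_k+1\}$. It ends with a maximal run $p_{\ell-r},p_{\ell-r}+e_k,\dots,p_{\ell-r}+re_k=p_\ell$, and $\ell-r\ge1$ because $x+e_k\notin S$. The step $p_{\ell-r-1}\to p_{\ell-r}$ is then $\pm e_i$ with $i\neq k$. The turn there gives an L-shape, but absence of the forbidden configuration only yields $p_{\ell-r-1}+e_k\in S$, which shortens the path only when $r=1$. You must iterate the rule up the run, with corners $p_{\ell-r}+(m-1)e_k$, to get $p_{\ell-r-1}+me_k\in S$ for $1\le m\le r$. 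This produces a path of length $\ell-1$ into the half-space, a contradiction. So $x\pm e_k\in S$ whenever $S$ has cubes strictly beyond $x$ in that direction, and hence $S$ equals its bounding box.

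The paper avoids connectivity questions entirely by propagating only inside a region that is connected for free. It takes an inclusion-maximal cuboid $Q=[p,q]\subseteq\mathcal P$. Connectivity of $\mathcal P$ gives neighbours $C(d)\in Q$ and $C(d+e_j)\in\mathcal P\setminus Q$ (up to sign), so $d_j=q_j-1$. The boundary layer $\{C(u)\subseteq Q: u_j=q_j-1\}$ is itself a box, hence connected by steps in directions $\neq j$. Your propagation rule then puts the entire adjacent layer into $\mathcal P$, so $[p,q+e_j]$ is a larger cuboid, contradicting maximality.
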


\begin{proof}
Suppose that $P$ is a cuboid. Then $V(P)$ is a finite distributive
lattice. Moreover, every incomparable pair $a,b\in V(P)$ belongs to
$L_D$ for some cuboid $D\subseteq P$. Hence $I_P$ coincides with the
defining ideal of the Hibi ring associated to $V(P)$, and therefore
$I_P$ is prime by \cite{HIBI87}.

Conversely, suppose that $\MP$ is not a cuboid. We first claim that there exist $a\in\NN^n$, distinct
$i,j\in\{1,\ldots,n\}$, and $\varepsilon,\delta\in\{-1,1\}$ such
that
\[
C(a),\quad C(a+\varepsilon e_i),\quad
C(a+\varepsilon e_i+\delta e_j)\in\MP, \quad \text{ but }
\quad C(a+\delta e_j)\notin\MP,
\]
where all four lower-left vertices belong to $\NN^n$.

Indeed, suppose that no such configuration exists. Choose a cuboid
$Q=[p,q]\subseteq\MP$ which is maximal with respect to inclusion.
Since $\MP$ is not a cuboid and is connected, there are neighbouring
unit-cubes $C(d)\in Q$ and $C(d+\varepsilon e_j)\in\MP\setminus Q$.
Assume $\varepsilon=1$; the other case is analogous. Since $C(d+e_j)$ lies immediately outside $Q$ in the positive
$j$th coordinate direction, we have $d_j=q_j-1$. Consider the
boundary layer of unit-cubes of $Q$ given by
\[
\{C(u)\subseteq Q:u_j=q_j-1\}.
\]
Let $C(d')$ be any cube in this layer. Since $d_j=d'_j=q_j-1$,
we may pass from $d$ to $d'$ by changing the coordinates different
from $j$ one unit at a time. Hence there exist $d=d_0,d_1,\ldots,d_t=d'$ such that $C(d_k)\subseteq Q$ for every $k$ and
\[
d_{k+1}-d_k=\pm e_{i_k}
\qquad\text{for some }i_k\neq j.
\]

Suppose that $C(d_k+e_j)\in \MP$. Since
$C(d_k),C(d_{k+1})\in Q\subseteq \MP$, if
$C(d_{k+1}+e_j)\notin \MP$, then these three cubes together with the
missing cube $C(d_{k+1}+e_j)$ form the configuration excluded by our
assumption. Hence $C(d_{k+1}+e_j)\in \MP$. Inductively,
$C(d'+e_j)\in \MP$. Since $d'$ was arbitrary, every unit-cube in the layer immediately
adjacent to this side of $Q$ belongs to $\MP$. Therefore
$[p,q+e_j]$ is a cuboid contained in $\MP$, properly containing $Q$,
a contradiction.

By the claim, after a translation, a permutation of the coordinate
axes, and, if necessary, reversing the relevant coordinate directions,
we may assume that $\MP$ contains
\[
C(0,\ldots,0),\qquad
C(1,0,\ldots,0),\qquad
C(1,0,1,0,\ldots,0),
\]
but does not contain $C(0,0,1,0,\ldots,0).$ Hence $I_\MP$ is not prime by Lemma~\ref{lem:not-prime}.
\end{proof}

\section{Gröbner bases}\label{sec:gb}
For the remainder of the paper, we restrict our attention to polycubes in $\RR^3$. In this section, we study the Gröbner bases of polycube ideals associated with a particular class of polycubes described in the following setup. For fixed $i,k\in\mathbb{N}$, we call 
\[\{C(i,j,k)\in \MP:j\in\mathbb{N}\}\]
the $y$-column of $\MP$ at $(i,k)$.

\begin{Setup}\label{setup:polycube}
Fix an integer $m\geq 1$. Let $\mathcal \MP$ be a convex polycube in
$\mathbb{R}^3$ satisfying the following conditions:
\begin{enumerate}
    \item $V(\mathcal \MP)$ is a sublattice of $\mathbb{N}^3$;
    
    \item every nonempty $y$-column of $\MP$ is of the form\[    \{C(i,j,k):0\leq j\leq m-1\}\]for some $i,k\in\mathbb{N}$.
\end{enumerate}
\end{Setup}

Condition~(2) means that every nonempty $y$-column of $\MP$
    consists of exactly $m$ unit-cubes. Thus, $\MP$ may be viewed as a
    two-dimensional configuration in the $xz$-plane extended uniformly
    in the $y$-direction.

We will frequently use the projection onto the $xz$-plane. Let
\[
\pi:\mathbb R^3\longrightarrow\mathbb R^2,\qquad
\pi(x,y,z)=(x,z).
\]
We use the same notation for its restriction to $V(\MP)$. Thus, for
$a=(a_1,a_2,a_3)\in V(\MP)$, we have
$\pi(a)=(a_1,a_3)$. Moreover, if $D=[(i_1,j_1,k_1),(i_2,j_2,k_2)]$ is a cuboid, then its $xz$-projection is the rectangle $\pi(D)=[(i_1,k_1),(i_2,k_2)]\subseteq\mathbb R^2.$

We first record an immediate consequence of Setup~\ref{setup:polycube}.
\begin{Remark}\label{rem:structure}
Let $\MP$ satisfy Setup~\ref{setup:polycube}.  
\begin{enumerate} 
\item If $(i,j,k)\in V(\MP)$, then $(i,r,k)\in V(\MP)$ for every $0\leq r\leq m.$

\item If $D\subseteq \MP$ is a cuboid and $a,b \in V(\MP)$ with
$\pi(a),\pi(b)\in\pi(D)$, then $a,b\in L_E$ for some cuboid
$E\subseteq \MP$. Indeed, the full $y$-extension of $\pi(D)$ is
contained in $\MP$.
\end{enumerate}
\end{Remark}

The following observation shows that for a polycube as in \cref{setup:polycube}, incomparability is enough to
produce a relation in $\mathcal G_\MP$ whenever it is already visible
after projection onto the $xz$-plane.

\begin{Lemma}\label{lem:projection}
Let $\MP$ satisfy Setup~\ref{setup:polycube}, and let $a,b\in V(\MP)$
be incomparable. If $\pi(a)$ and $\pi(b)$ are incomparable in
$\mathbb N^2$, then $f_{a,b}\in\mathcal G_\MP$.
\end{Lemma}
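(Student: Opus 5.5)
Write $a=(a_1,a_2,a_3)$ and $b=(b_1,b_2,b_3)$. The plan is to exhibit an explicit cuboid $D\subseteq\MP$ with $a,b\in L_D$. Since $\pi(a)$ and $\pi(b)$ are incomparable in $\NN^2$, both $a_1\neq b_1$ and $a_3\neq b_3$ hold, and the differences have opposite signs. After swapping $a$ and $b$ we may assume $a_1<b_1$ and $a_3>b_3$.

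\emph{Choice of the $y$-range.} Choose integers $j_1<j_2$ in $[0,m]$ as follows:
\begin{itemize}
\item if $a_2\neq b_2$, take $\{j_1,j_2\}=\{a_2,b_2\}$;
\item if $a_2=b_2<m$, take $[j_1,j_2]=[a_2,m]$;
\item if $a_2=b_2=m$, take $[j_1,j_2]=[0,m]$.
\end{itemize}
Note that $a_2,b_2\in[0,m]$ by Setup~\ref{setup:polycube}(2). Set
\[
D=[(a_1,j_1,b_3),(b_1,j_2,a_3)].
\]
In every case each coordinate of $a$ and of $b$ is an endpoint of the corresponding coordinate range of $D$. Hence $a,b\in L_D$, and they are incomparable there. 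It remains to show that $D$ is a cuboid of $\MP$, because then $f_{a,b}\in\mathcal G_{L_D}\subseteq\mathcal G_\MP$.

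\emph{Reduction to the plane.} By Setup~\ref{setup:polycube}(2), a unit-cube $C(i,j,k)$ with $0\le j\le m-1$ lies in $\MP$ exactly when the unit square with lower-left corner $(i,k)$ lies in the polyomino $\pi(\MP)$. So it suffices to show that every unit square inside the rectangle
\[
R=[(a_1,b_3),(b_1,a_3)]
\]
belongs to $\pi(\MP)$.

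\emph{The corners of $R$ lie in $\MP$.} Since $V(\MP)$ is a sublattice of $\NN^3$, the points $a\wedge b$ and $a\vee b$ lie in $V(\MP)$. Together with $a$ and $b$ themselves, their projections are the four corners $(a_1,b_3)$, $(b_1,a_3)$, $(a_1,a_3)$ and $(b_1,b_3)$ of $R$. By Remark~\ref{rem:structure}(1) we can also move these four points in the $y$-direction to a common height $y=c$, staying in $V(\MP)$. Thus all four corners of $R\times\{c\}$ are points of $\MP$, viewed as a subset of $\RR^3$.

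\emph{Filling $R$ by convexity.} All segments used here are parallel to coordinate axes and lie in the plane $y=c$.
\begin{itemize}
\item Convexity applied to the two vertical edges of $R\times\{c\}$ puts both of them inside $\MP$.
\item For each height $t$ between $b_3$ and $a_3$, the horizontal segment at height $t$ joins a point of the left edge to a point of the right edge. Convexity again puts it inside $\MP$.
\end{itemize}
Hence $R\times\{c\}\subseteq\MP$, and so $R\subseteq\pi(\MP)$ as a set.

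\emph{From the set $R$ to unit squares.} Take a unit square $Q\subseteq R$. An interior point of $Q$ lies in $\pi(\MP)$, which is a union of unit squares. The only unit square containing an interior point of $Q$ is $Q$ itself, so $Q$ is a square of $\pi(\MP)$. Therefore all cubes of $D$ lie in $\MP$, which completes the proof.

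\emph{Expected obstacle.} The delicate point is this passage from "the corners lie in $\MP$" to "the cells of $R$ belong to $\MP$". Convexity only controls segments, i.e.\ point sets, not which unit-cubes are present. The interior-point argument above is what bridges this gap. The degenerate case $a_2=b_2$ is the other place needing care, and it is handled by the choice of $[j_1,j_2]$, using that full $y$-columns are present.
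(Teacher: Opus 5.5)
Your proposal is correct and follows essentially the same route as the paper: obtain the four corners of the rectangle from the sublattice property and Remark~\ref{rem:structure}(1), fill the rectangle by convexity, extend it through the full $y$-range using the uniform column height, and handle the case $a_2=b_2$ by choosing a second $y$-level. Your interior-point argument, which passes from ``the rectangle lies in $\MP$ as a point set'' to ``its unit-cubes belong to $\MP$'', makes explicit a step the paper leaves implicit.
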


\begin{proof}
Write $a=(i_1,j_1,k_1)$ and $b=(i_2,j_2,k_2)$. Since
$\pi(a)$ and $\pi(b)$ are incomparable, after interchanging $a$ and
$b$ if necessary, we may assume that $i_1<i_2$ and $k_1>k_2$.

By Remark~\ref{rem:structure},
$(i_1,0,k_1),(i_2,0,k_2)\in V(\MP)$. Since $V(\MP)$ is a sublattice of
$\mathbb N^3$, it follows that
$(i_1,0,k_2),(i_2,0,k_1)\in V(\MP)$. By convexity, first in the
$z$-direction and then in the $x$-direction, the rectangle $[(i_1,k_2),(i_2,k_1)]$ is contained in $\MP$. Since every nonempty $y$-column of $\MP$ consists
of exactly $m$ unit-cubes, the cuboid $[(i_1,0,k_2),(i_2,m,k_1)]$ is contained in $\MP$.

If $j_1\neq j_2$, then $a, b \in L_D$, where $D$ is the cuboid
\[
[(i_1,\min\{j_1,j_2\},k_2),
  (i_2,\max\{j_1,j_2\},k_1)].
\]
If $j_1=j_2$, choose $j'\in\{0,\ldots,m\}$ with $j'\neq j_1$ and use
the cuboid whose $y$-coordinates are $j_1$ and $j'$. In either case,
$a$ and $b$ belong to the lattice associated to a cuboid contained in
$\MP$. Hence $f_{a,b}\in\mathcal G_\MP$.
\end{proof}

We next identify the auxiliary relations that arise when reducing the
$S$-polynomial of two generators of $\mathcal G_\MP$ having a common
variable. Lemma~\ref{lem:projection} will allow us to verify that all
such relations still belong to $\mathcal G_\MP$.

\begin{Lemma}\label{lem:reduction-relations}
Let $\MP$ satisfy Setup~\ref{setup:polycube}, and suppose that
$f_{a,b},f_{b,c}\in\mathcal G_\MP$. Write
$a=(a_1,a_2,a_3)$ and $c=(c_1,c_2,c_3)$, and assume that
$a_2\geq c_2$. Set
\[
u=a\wedge b,\quad s=a\vee b,\quad
t=b\wedge c,\quad w=b\vee c, \quad 
v=c\wedge s,\quad z=a\vee t.
\]
Then, whenever the corresponding pair is incomparable, each of $f_{c,s}, f_{a,t}, f_{u,v}, f_{z,w}$ belongs to $\mathcal G_\MP$.
\end{Lemma}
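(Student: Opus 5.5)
The plan is to reduce everything to the $xz$-projection. Fix one of the four pairs, call it $(p,q)$, and assume it is incomparable. There are two cases.

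If $\pi(p)$ and $\pi(q)$ are incomparable in $\mathbb N^2$, then $f_{p,q}\in\mathcal G_\MP$ directly by Lemma~\ref{lem:projection}.

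Otherwise, after swapping, $\pi(p)<\pi(q)$, and incomparability of $p,q$ forces $p_2>q_2$. For this case I would aim to show that the rectangle $[\pi(p),\pi(q)]$ lies in $\pi(D)$ for some cuboid $D\subseteq\MP$. This rectangle may be degenerate, i.e.\ a segment; then it is enough that it lies in the projection of some cuboid of $\MP$. Remark~\ref{rem:structure}(2) then gives a cuboid $E$ with $p,q\in L_E$, so $f_{p,q}\in\mathcal G_\MP$. Note that in $E$ one may shrink the $x$- or $z$-range to a single unit when $p,q$ agree in that coordinate. In the same spirit, I would only need a unit square of $\pi(\MP)$ adjacent to the segment in the degenerate case.

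To find the cuboid, I would use the hypotheses. Since $f_{a,b}\in\mathcal G_\MP$, there is a cuboid $D_1\subseteq\MP$ with $a,b\in L_{D_1}$. Hence $u,s\in L_{D_1}$ and $\pi(a),\pi(b),\pi(u),\pi(s)$ are corners of the rectangle $R_1=\pi(D_1)$. Likewise $b,c,t,w\in L_{D_2}$, and their projections are corners of $R_2=\pi(D_2)$. The rectangles $R_1$ and $R_2$ share the corner $\pi(b)$.

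Each of the four pairs mixes one point from each lattice:
\begin{itemize}
\item $(c,s)$: $c\in L_{D_2}$ and $s\in L_{D_1}$;
\item $(a,t)$: $a\in L_{D_1}$ and $t\in L_{D_2}$;
\item $(u,v)$ with $v=c\wedge s$, and $(z,w)$ with $z=a\vee t$.
\end{itemize}
So I would do a case analysis on the relative position of $\pi(a)$, $\pi(c)$ and the shared corner $\pi(b)$ in $R_1\cup R_2$. In each configuration where a pair has comparable projections, I would check that its bounding rectangle is covered by $R_1$, by $R_2$, or by the bounding box of $R_1\cup R_2$. For the bounding box I would argue that it lies in $\pi(\MP)$: its corners are joins and meets of corners of $R_1,R_2$, so they are vertices by the sublattice condition (1). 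Convexity of $\MP$ in the $x$- and $z$-directions then fills in the rectangle, exactly as in the proof of Lemma~\ref{lem:projection}.

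The hypothesis $a_2\ge c_2$ is what makes $y$-incomparability line up with projection comparability in the right direction. For example, for $(c,s)$ one has $s_2\ge a_2\ge c_2$, so $c,s$ incomparable with comparable projections forces $\pi(s)<\pi(c)$ strictly. That is exactly a configuration where $\pi(c)$ sits ``above'' $R_1$ through the shared corner $\pi(b)$, and the box is the union-type rectangle. Analogous sign bookkeeping handles $(a,t)$. The pairs $(u,v)$ and $(z,w)$ are dual to $(c,s)$ and $(a,t)$; I would handle them by applying the order dual, i.e.\ the symmetry $x\mapsto -x$ after translation, which preserves Setup~\ref{setup:polycube} up to reflecting the $y$-column.

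The main obstacle is the comparable-projection case. There one must guarantee that the whole rectangle spanned by the two projections is a union of unit squares of $\pi(\MP)$, and not merely that its corners are vertices. A convex polyomino with sublattice vertex set can still have staircase boundaries, so the corners alone do not suffice. I would first try to show that the rectangle spanned by two comparable points, one in $R_1$ and one in $R_2$, is always the bounding box of two incomparable corners of $R_1\cup R_2$. That box is genuinely inside $\pi(\MP)$ by the convexity argument of Lemma~\ref{lem:projection}. If some configuration escapes this, I would fall back on a direct check using $R_1\cup R_2$ together with convexity along the common edges through $\pi(b)$.
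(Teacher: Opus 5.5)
Your outline is the paper's: split on whether the projections are comparable, use Lemma~\ref{lem:projection} when they are not, and finish with Remark~\ref{rem:structure}(2) once both projections lie in a single $\pi(D)$. But the step that carries the content is left as an unspecified case analysis, and the idea it needs is missing. In the comparable case, the two projections are always sandwiched between two corners of one rectangle $R_i$, so they lie in $R_1$ or in $R_2$. (A rectangle contains the box spanned by any two comparable points of it.) Concretely:
for $(c,s)$, $b\le s$ and your observation give $\pi(b)\le\pi(s)<\pi(c)$, so both lie in $R_2$, not in a ``union-type'' box;
for $(a,t)$, $\pi(a)<\pi(t)\le\pi(b)$, so both lie in $R_1$;
for $(u,v)$, since $u\le b$ and $t\le v\le s$, either $\pi(u)\le\pi(v)\le\pi(s)$ (in $R_1$) or $\pi(t)\le\pi(v)\le\pi(u)\le\pi(b)$ (in $R_2$);
for $(z,w)$, since $t\le z\le s$ and $b\le w$, either $\pi(t)\le\pi(z)\le\pi(w)$ (in $R_2$) or $\pi(b)\le\pi(w)\le\pi(z)\le\pi(s)$ (in $R_1$).
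Because each $R_i$ is a genuine rectangle, this also covers the degenerate segment cases. No region beyond $R_1$ and $R_2$ is ever needed.

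Two of the tools you propose in place of this would fail. First, the bounding box of $R_1\cup R_2$ need not lie in $\pi(\MP)$. Only its lowest and highest corners are meets or joins of corners of $R_1,R_2$; the other two corners need not be vertices. For example, take $m=2$ and $\pi(\MP)$ the L-shape with squares $[(0,0),(1,1)]$, $[(1,0),(2,1)]$, $[(1,1),(2,2)]$, which is convex with sublattice vertex set. Take $a=(0,2,0)$, $b=(1,1,1)$, $c=(2,0,2)$, $D_1=[(0,1,0),(1,2,1)]$ and $D_2=[(1,0,1),(2,1,2)]$. The box $[(0,0),(2,2)]$ contains the missing square $[(0,1),(1,2)]$, whose corner $(0,2)$ is not even a vertex. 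So that fallback is unavailable.

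Second, the duality does not reduce $(u,v)$ and $(z,w)$ to $(c,s)$ and $(a,t)$. To keep $a_2\ge c_2$ after the point reflection you must swap the roles of $a$ and $c$. A direct computation then shows that the reflection exchanges $(c,s)\leftrightarrow(a,t)$ and $(u,v)\leftrightarrow(z,w)$. Hence at least one of $(u,v),(z,w)$ needs its own argument, which your proposal never gives. As shown above, that argument uses no hypothesis on $y$-coordinates, since both directions of comparability are handled directly.
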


\begin{proof}
Choose cuboids $D_1,D_2\subseteq \MP$ such that
$a,b\in L_{D_1}$ and $b,c\in L_{D_2}$, and set
\[
R_i=\pi(D_i),\qquad i=1,2.
\]
By Setup~\ref{setup:polycube}, if
$R_i=[(\alpha_i,\gamma_i),(\beta_i,\delta_i)]$, then the cuboid $[(\alpha_i,0,\gamma_i),(\beta_i,m,\delta_i)]$ is contained in $\MP$. Consequently, if two incomparable vertices have
their $xz$-projections in the same $R_i$, then they belong to $L_D$
for some cuboid $D\subseteq \MP$, and hence the corresponding binomial
belongs to $\mathcal G_\MP$.

If the projections of any of the pairs
$\{c,s\}$, $\{a,t\}$, $\{u,v\}$, or $\{z,w\}$ are incomparable in
$\mathbb N^2$, then the corresponding assertion follows from
Lemma~\ref{lem:projection}. It therefore remains to consider the case
when the projections of the corresponding pair are comparable.

Suppose first that $c$ and $s$ are incomparable, and write
$s=(s_1,s_2,s_3)$. Since $c_2\leq a_2\leq s_2$, we cannot have
$\pi(c)\leq\pi(s)$, for otherwise $c\leq s$. Hence
$\pi(s)<\pi(c)$. Since $b\leq s$, we obtain $\pi(b)\leq\pi(s)<\pi(c).$ Thus $\pi(s),\pi(c)\in R_2$, and consequently
$f_{c,s}\in\mathcal G_\MP$.

Next suppose that $a$ and $t$ are incomparable, and write
$t=(t_1,t_2,t_3)$. Since $t_2\leq c_2\leq a_2$, we cannot have
$\pi(t)\leq\pi(a)$, for otherwise $t\leq a$. Hence
$\pi(a)<\pi(t)$. Since $t\leq b$, we have $\pi(a)<\pi(t)\leq\pi(b).$ Thus $\pi(a),\pi(t)\in R_1$, and hence
$f_{a,t}\in\mathcal G_\MP$.

Now suppose that $u$ and $v$ are incomparable. Recall that
$u,s\in L_{D_1}$ and $t,b\in L_{D_2}$. If
$\pi(u)\leq\pi(v)$, then $v\leq s$, and therefore $\pi(u)\leq\pi(v)\leq\pi(s).$ Thus $\pi(u),\pi(v)\in R_1$. If instead
$\pi(v)\leq\pi(u)$, then $t\leq v$ and $u\leq b$, so $\pi(t)\leq\pi(v)\leq\pi(u)\leq\pi(b).$ Hence $\pi(u),\pi(v)\in R_2$. In either case, $f_{u,v}\in\mathcal G_\MP$.

Finally, suppose that $z$ and $w$ are incomparable. If
$\pi(z)\leq\pi(w)$, then $t\leq z$, and hence $\pi(t)\leq\pi(z)\leq\pi(w).$ Since $t,w\in L_{D_2}$, we have $\pi(z),\pi(w)\in R_2$. If instead
$\pi(w)\leq\pi(z)$, then $b\leq w$ and $z\leq s$, so $\pi(b)\leq\pi(w)\leq\pi(z)\leq\pi(s).$ Since $b,s\in L_{D_1}$, we have $\pi(z),\pi(w)\in R_1$. Thus $f_{z,w}\in\mathcal G_\MP$.
\end{proof}

We next fix the monomial order that will be used in the Gröbner basis
argument. Let $S=K[x_a:a\in V(\MP)]$. Choose a total order $\prec$ on the variables
of $S$ such that
\[
x_a\prec x_b\qquad\text{whenever }a<b\text{ in }V(\MP).
\]
Let $<_{\mathrm{rev}}$ denote the graded reverse lexicographic order
induced by $\prec$. Since $V(\MP)$ is a finite distributive lattice and $\prec$ extends its
partial order, for every $f_{a,b}\in\mathcal G_\MP$ we have
\[
\operatorname{in}_{<_{\mathrm{rev}}}(f_{a,b})=x_ax_b,
\]
see \cite[Theorem~6.19]{HHO18}.
We are now ready to prove the main result of this section.

\begin{Theorem}\label{thm:GB}
Let $\MP$ be a polycube satisfying Setup~\ref{setup:polycube}. Then
$\mathcal G_\MP$ is a Gröbner basis of $I_\MP$ with respect to
$<_{\mathrm{rev}}$.
\end{Theorem}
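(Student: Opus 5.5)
We apply Buchberger's criterion to $\mathcal G_\MP$ with respect to $<_{\mathrm{rev}}$. Since $\operatorname{in}(f_{a,b})=x_ax_b$ for every generator, two generators whose leading monomials are coprime have an $S$-polynomial that reduces to zero. It therefore suffices to treat pairs $f_{a,b},f_{b,c}$ sharing a variable $x_b$ in their initial terms, with $a\neq c$. We will show that each such $S$-polynomial reduces to zero modulo $\mathcal G_\MP$.

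The guiding model is the Hibi ring of the finite distributive lattice $V(\MP)$ (a sublattice of $\NN^3$ by Setup~\ref{setup:polycube}). There, by \cite[Theorem~6.19]{HHO18}, the $S$-polynomial
\[
S(f_{a,b},f_{b,c})=x_cx_{u}x_{s}-x_ax_{t}x_{w},\qquad u=a\wedge b,\ s=a\vee b,\ t=b\wedge c,\ w=b\vee c,
\]
reduces to zero using join--meet relations of $V(\MP)$. The plan is to exhibit an explicit standard reduction and check that every relation used in it lies in $\mathcal G_\MP$. By symmetry between $a$ and $c$ we may assume $a_2\ge c_2$, which is exactly the normalization in Lemma~\ref{lem:reduction-relations}.

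The concrete reduction is as follows. If $c,s$ are incomparable, reduce $x_cx_s$ via $f_{c,s}$ to $x_vx_{c\vee s}$, where $v=c\wedge s$. If $a,t$ are incomparable, reduce $x_ax_t$ via $f_{a,t}$ to $x_{a\wedge t}x_z$, where $z=a\vee t$. The two sides then become $x_ux_vx_{c\vee s}$ and $x_{a\wedge t}x_zx_w$. Distributivity gives $c\vee s=a\vee b\vee c=z\vee w$ and $a\wedge t=a\wedge b\wedge c=u\wedge v$, and one checks in the lattice that $u\vee v=z\wedge w$. Hence, when $u,v$ are incomparable and $z,w$ are incomparable, further applying $f_{u,v}$ and $f_{z,w}$ brings both sides to the common monomial $x_{u\wedge v}\,x_{u\vee v}\,x_{z\vee w}$, so the difference reduces to $0$. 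When some pair is comparable, the corresponding step is omitted; one then verifies that the remaining monomials already agree, for example $c\le s$ forces $v=c$ and $c\vee s=s$.

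By Lemma~\ref{lem:reduction-relations}, each of $f_{c,s},f_{a,t},f_{u,v},f_{z,w}$ belongs to $\mathcal G_\MP$ whenever its pair is incomparable, so all reduction steps use elements of $\mathcal G_\MP$. The main obstacle is the lattice bookkeeping. We must verify the identity $u\vee v=z\wedge w$ and confirm that each reduction step strictly decreases the monomial, which holds because the initial term is always the incomparable product. We must also make sure that in each degenerate case, where one or more of the four pairs is comparable, the chain still closes up. This is routine in a distributive lattice: the expression can be handled coordinatewise in $\NN^3$, where it becomes an identity about medians of three integers. We would check the degenerate cases by that coordinatewise argument.
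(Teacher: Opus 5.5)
Your proposal is correct and follows the paper's proof essentially step by step. Like the paper, you apply Buchberger's criterion with the normalization $a_2\ge c_2$, reduce $S(f_{a,b},f_{b,c})$ via $f_{c,s}$ and $f_{a,t}$, and close with $f_{u,v}$ and $f_{z,w}$, all of which lie in $\mathcal G_\MP$ by Lemma~\ref{lem:reduction-relations}. The degenerate cases are simpler than your coordinatewise median check suggests: if a pair is comparable, its meet and join are the two elements of the pair, so the corresponding binomial is identically zero and the step is vacuous.
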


\begin{proof}
We use Buchberger's criterion. Let $f,g\in\mathcal G_\MP$ and $f\neq g$. If
$\operatorname{in}(f)$ and $\operatorname{in}(g)$ are relatively prime,
then $S(f,g)$ reduces to zero. Otherwise, after relabeling the vertices,
we may write
\[
f=f_{a,b}=x_ax_b-x_ux_s,\qquad
g=f_{b,c}=x_bx_c-x_tx_w,
\]
where $u=a\wedge b$, $s=a\vee b$, $t=b\wedge c$, and $w=b\vee c$.
After interchanging $f$ and $g$ if necessary, we may further assume
that $a_2\geq c_2$.

Set $v=c\wedge s$, $z=a\vee t$, $p=a\wedge b\wedge c$,
$q=a\vee b\vee c$, and $r=u\vee v$. By distributivity,
\[
u\wedge v=p,\qquad
u\vee v=r=z\wedge w,\qquad
z\vee w=q.
\]
Consider the binomials
\[
\begin{aligned}
h_1&=x_cx_s-x_vx_q, &
h_2&=x_ax_t-x_px_z,\\
h_3&=x_ux_v-x_px_r, &
h_4&=x_zx_w-x_rx_q.
\end{aligned}
\]
By Lemma~\ref{lem:reduction-relations}, whenever the corresponding
pair is incomparable, $h_i\in\mathcal G_\MP$. If the corresponding pair
is comparable, then $h_i=0$. Thus every nonzero $h_i$ may be used in
the following reductions. A direct computation gives
\[
S(f,g)=-x_cx_ux_s+x_ax_tx_w.
\]
Using $h_1$ and $h_2$, this reduces to
\[
R=-x_ux_vx_q+x_px_zx_w.
\]
Finally,
\[
R=-x_qh_3+x_ph_4,
\]
and hence $R$ reduces to zero using $h_3$ and $h_4$. Therefore
$S(f,g)$ reduces to zero. Thus every $S$-polynomial of two elements of $\mathcal G_\MP$ reduces
to zero, as required.
\end{proof}

\section{Cohen--Macaulayness and vertex decomposability}

Throughout this section, let ${\mathcal P}$ be a polycube satisfying
Setup~\ref{setup:polycube}. By Theorem~\ref{thm:GB},
$\mathcal G_{\mathcal P}$ is a quadratic Gröbner basis of $I_{\mathcal P}$ with respect to $<_{\mathrm{rev}}$, and hence
$\operatorname{in}_{<_{\mathrm{rev}}}(I_{\mathcal P})$ is a squarefree quadratic monomial ideal. It is known that
\[
S/I_{\mathcal P} \text{ is Cohen--Macaulay}
\quad\Longleftrightarrow\quad
S/\operatorname{in}_{<_{\mathrm{rev}}}(I_{\mathcal P})
\text{ is Cohen--Macaulay}.
\]
Indeed, the implication ``$\Leftarrow$'' holds for arbitrary Gr\"obner
degenerations~\cite[Theorem~2.19]{HHO18}, 
while ``$\Rightarrow$'' is the theorem of Conca and Varbaro on squarefree
Gr\"obner degenerations \cite{CV20squarefree}. 
Consequently, in order to characterize the Cohen--Macaulay polycube algebras in this
class, it suffices to determine when
$S/\operatorname{in}_{<_{\mathrm{rev}}}(I_{\mathcal P})$ is Cohen--Macaulay.

Since $\operatorname{in}_{<_{\mathrm{rev}}}(I_{\mathcal P})$ is generated by squarefree quadratic monomials, it is the Stanley--Reisner ideal of a flag complex on $V(\mathcal P)$ whose minimal nonfaces are the pairs $\{a,b\}$ such that $x_ax_b$ is the leading term of an element of $\MG_{\mathcal P}$. By \cref{thm:GB}, these are exactly the pairs $a,b$ which belong to
$L_D$ for some cuboid $D\subseteq\mathcal P$ and are incomparable. We wish to realize this flag complex as the order complex of a poset on $V(\mathcal P)$, so that the tools of poset topology become available.
  For a cuboid $D\subseteq {\mathcal P}$, let $\pi(D)$ denote its $xz$-projection, as in Section~\ref{sec:gb}.

\begin{Definition}\label{def:QP}
Define a relation $\leq_{Q_{\mathcal P}}$ on $V({\mathcal P})$ as follows. For
\[
a=(i_1,j_1,k_1),\qquad b=(i_2,j_2,k_2)\in V({\mathcal P}),
\]
we set $a\leq_{Q_{\mathcal P}}b$ if either
\begin{enumerate}
    \item $a\leq b$ in $\mathbb N^3$; or
    \item there is no cuboid $D\subseteq {\mathcal P}$ such that
    $a,b\in L_D$, and
    \[
    \pi(a)\leq\pi(b)\qquad\text{in} \quad \mathbb{N}^2.
    \]
\end{enumerate}
\end{Definition}

Observe that condition~(1) immediately implies that
\[
a\leq b \text{ in } V(\MP)
\quad\Longrightarrow\quad
a\leq_{Q_{\mathcal P}} b.
\]

It follows that $\leq_{Q_{\mathcal P}}$ is a partial order. Indeed, suppose that
$a\leq_{Q_{\mathcal P}}b$ and $b\leq_{Q_{\mathcal P}}a$. Then
$\pi(a)=\pi(b)$. By Remark~\ref{rem:structure}, $a$ and $b$ belong
to $L_D$ for some cuboid $D\subseteq {\mathcal P}$. Hence condition~(2) cannot
apply, so $a\leq b$ and $b\leq a$ in $\mathbb N^3$. Thus $a=b$.

For transitivity, suppose that
$a\leq_{Q_{\mathcal P}}b\leq_{Q_{\mathcal P}}c$. Then $\pi(a)\leq\pi(b)\leq\pi(c).$ If $a_2\leq c_2$, then $a\leq c$ in $\mathbb N^3$. Suppose that
$a_2>c_2$. If $a,c\in L_D$ for some cuboid $D\subseteq {\mathcal P}$, then
$\pi(b)\in\pi(D)$. By Remark~\ref{rem:structure}, there exist cuboids
$D_1,D_2\subseteq {\mathcal P}$ such that $a,b\in L_{D_1}$ and $b,c\in L_{D_2}.$ Since $a_2>c_2$, either $a_2>b_2$ or $b_2>c_2$.
In the first case, condition~(1) cannot give
$a\leq_{Q_{\mathcal P}}b$, while the existence of $D_1$ excludes
condition~(2), a contradiction. The second case similarly
contradicts $b\leq_{Q_{\mathcal P}}c$. Therefore $a$ and $c$ do not belong to $L_D$ for any cuboid
$D\subseteq {\mathcal P}$. Since $\pi(a)\leq\pi(c)$, condition~(2) gives
$a\leq_{Q_{\mathcal P}}c$. 

We denote the resulting poset by $Q_{\mathcal P}$. Let $\Delta(Q_{\mathcal P})$ be its order complex, and let $I_{\Delta(Q_{\mathcal P})}$ denote the Stanley--Reisner
ideal of $\Delta(Q_{\mathcal P})$.

\begin{Lemma}\label{lem:initial-order-complex}
We have $I_{\Delta(Q_{\mathcal P})}
=
\operatorname{in}_{<_{\mathrm{rev}}}(I_{\mathcal P}).$
\end{Lemma}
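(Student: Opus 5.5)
Both ideals are squarefree quadratic monomial ideals, so I will prove the lemma by comparing their minimal nonfaces, i.e.\ their degree-two generators. By Theorem~\ref{thm:GB}, $\operatorname{in}_{<_{\mathrm{rev}}}(I_{\mathcal P})$ is generated by the monomials $x_ax_b$ with $a,b$ incomparable in $\mathbb N^3$ and $a,b\in L_D$ for some cuboid $D\subseteq\mathcal P$. Since $\Delta(Q_{\mathcal P})$ is a flag complex, $I_{\Delta(Q_{\mathcal P})}$ is generated by $x_ax_b$ with $a,b$ incomparable in $Q_{\mathcal P}$. It therefore suffices to show, for $a\neq b$ in $V(\mathcal P)$:
\[
a,b \text{ incomparable in } Q_{\mathcal P}
\iff
a,b \text{ incomparable in } \mathbb N^3 \text{ and } a,b\in L_D \text{ for some cuboid } D\subseteq\mathcal P.
\]

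For ``$\Leftarrow$'', suppose $a,b$ are incomparable in $\mathbb N^3$ and lie in a common $L_D$. Then condition~(1) of Definition~\ref{def:QP} fails in both directions. Condition~(2) also fails in both directions, because it explicitly requires that no such $D$ exist. Hence $a,b$ are incomparable in $Q_{\mathcal P}$.

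For ``$\Rightarrow$'', suppose $a,b$ are incomparable in $Q_{\mathcal P}$. Since $\leq$ in $\mathbb N^3$ refines to $\leq_{Q_{\mathcal P}}$, they are incomparable in $\mathbb N^3$. It remains to produce a common cuboid, and I split into two cases.
\begin{itemize}
\item If $\pi(a)$ and $\pi(b)$ are incomparable in $\mathbb N^2$, then Lemma~\ref{lem:projection} gives $f_{a,b}\in\mathcal G_{\mathcal P}$. Unwinding the definition of $\mathcal G_{\mathcal P}$, this means $a,b\in L_D$ for some cuboid $D$, which is what we want.
\item If instead $\pi(a)$ and $\pi(b)$ are comparable, say $\pi(a)\leq\pi(b)$, then either some cuboid $D$ has $a,b\in L_D$ and we are done, or none does, in which case condition~(2) gives $a\leq_{Q_{\mathcal P}}b$. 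The latter contradicts incomparability in $Q_{\mathcal P}$.
\end{itemize}

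The only point needing care is the identification of the generators of the initial ideal. Theorem~\ref{thm:GB} gives that $\operatorname{in}(I_{\mathcal P})$ is generated by the leading terms $x_ax_b$ of the $f_{a,b}\in\mathcal G_{\mathcal P}$. One should also note that a leading term $x_ax_b$ never coincides with a monomial $x_{a\wedge b}x_{a\vee b}$, so no square-type generators appear. With that, the generator sets of the two ideals coincide and the lemma follows.
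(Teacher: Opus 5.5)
Your proposal is correct and follows the paper's proof essentially step for step. You identify the quadratic generators of both ideals via Theorem~\ref{thm:GB} and flagness, then handle the converse by splitting on whether $\pi(a),\pi(b)$ are comparable (using condition~(2) of Definition~\ref{def:QP}) or incomparable (using Lemma~\ref{lem:projection}).
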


\begin{proof}
The Stanley--Reisner ideal $I_{\Delta(Q_{\mathcal P})}$ is generated by the
monomials $x_ax_b$, where $a$ and $b$ are incomparable in $Q_{\mathcal P}$. Let $f_{a,b}\in\mathcal G_{\mathcal P}$. Then $a$ and $b$ are incomparable in
$\mathbb N^3$ and belong to $L_D$ for some cuboid $D\subseteq {\mathcal P}$. Hence $a$ and $b$ are incomparable in $Q_{\mathcal P}$, and therefore $\ini_{<_{\mathrm{rev}}}(f_{a,b})=x_ax_b \in I_{\Delta(Q_{\mathcal P})}.$ Thus, Theorem~\ref{thm:GB} gives $\ini_{<_{\mathrm{rev}}}(I_{\mathcal P})
\subseteq I_{\Delta(Q_{\mathcal P})}.$

Conversely, let $a,b\in Q_{\mathcal P}$ be incomparable. Then $a$ and $b$ are
incomparable in $\mathbb N^3$, since otherwise condition~(1) would
make them comparable in $Q_{\mathcal P}$.

Suppose first that $\pi(a)$ and $\pi(b)$ are comparable in
$\mathbb N^2$. After interchanging $a$ and $b$ if necessary, assume
that $\pi(a)\leq\pi(b)$. If there were no cuboid $D\subseteq {\mathcal P}$ such
that $a,b\in L_D$, then condition~(2) of
Definition~\ref{def:QP} would give $a\leq_{Q_{\mathcal P}}b$, a contradiction.
Hence $a,b\in L_D$ for some cuboid $D\subseteq {\mathcal P}$, and therefore
$f_{a,b}\in\mathcal G_{\mathcal P}$.

If $\pi(a)$ and $\pi(b)$ are incomparable in $\mathbb N^2$, then
Lemma~\ref{lem:projection} gives $f_{a,b}\in\mathcal G_{\mathcal P}$.

Thus, in either case,
\[
x_ax_b=\operatorname{in}_{<_{\mathrm{rev}}}(f_{a,b})
\in\operatorname{in}_{<_{\mathrm{rev}}}(I_{\mathcal P}).
\]
Therefore
$I_{\Delta(Q_{\mathcal P})}=\operatorname{in}_{<_{\mathrm{rev}}}(I_{\mathcal P})$.
\end{proof}

\subsection{Necessary conditions for Cohen--Macaulayness of $\Delta(Q_\MP)$}

By \cref{lem:initial-order-complex} and Reisner's criterion, $S/{{\ini}_{<_{\rm rev}}(I_{\MP})}$ is Cohen--Macaulay if and only if $\Delta(Q_{\MP})$ is Cohen--Macaulay. Together with the equivalence recalled at the beginning of this section,
the same is true for $K[\MP]$. We now derive necessary conditions for $\Delta(Q_{\MP})$ to be Cohen--Macaulay.

\begin{Proposition}\label{prop:height-one} 
Let ${\mathcal P}$ satisfy Setup~\ref{setup:polycube}, and suppose that ${\mathcal P}$ is
not a cuboid. If $S/\operatorname{in}_{<_{\mathrm{rev}}}(I_{\mathcal P})$ is
Cohen--Macaulay, then $m=1$. In particular, after a translation in
the $x$- and $z$-coordinates, ${\mathcal P}\subseteq[(0,0,0),(l,1,n)]$ for some $l,n\geq1$.
\end{Proposition}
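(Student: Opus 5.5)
The plan is to show that for $m\geq 2$ and ${\mathcal P}$ not a cuboid, $\Delta(Q_{\mathcal P})$ is not pure, which rules out Cohen--Macaulayness. By \cref{lem:initial-order-complex}, Cohen--Macaulayness of $S/\operatorname{in}_{<_{\mathrm{rev}}}(I_{\mathcal P})$ is equivalent to that of $\Delta(Q_{\mathcal P})$. For $x\leq_{Q_{\mathcal P}} y$, the order complex of the open interval $(x,y)_{Q_{\mathcal P}}$ is a link in $\Delta(Q_{\mathcal P})$: it is the link of a maximal chain from a minimal element to $x$ joined with a maximal chain from $y$ to a maximal element. Since links of Cohen--Macaulay complexes are Cohen--Macaulay and hence pure, every interval $[x,y]_{Q_{\mathcal P}}$ must be a pure poset. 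So it suffices to find one interval containing two maximal chains of different lengths.

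First I would find a local configuration in the projection. Let $R=\pi({\mathcal P})$. By \cref{setup:polycube}, ${\mathcal P}=R\times[0,m]$, $R$ is a convex polyomino, and $V(R)$ is a sublattice of $\mathbb N^2$. Since ${\mathcal P}$ is not a cuboid, $R$ is not a rectangle, so the bounding rectangle $[\min V(R),\max V(R)]$ contains a unit square $C(i,k)$ not in $R$. Using convexity and the lattice property, I expect to choose such a missing square on the staircase boundary of $R$ with the following properties:
\begin{itemize}
\item $\alpha=(i,k)$, $\alpha'=\alpha+e_1$ (or symmetrically $\alpha+e_2$) and $\beta=(i+1,k+1)$ all lie in $V(R)$;
\item $\alpha$ and $\beta$ lie in no common rectangle of $R$.
\end{itemize}
Pinning down this corner rigorously, for instance by taking a missing square adjacent to $R$ and invoking the sublattice property to get the two diagonal vertices, is the step I expect to need the most care.

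Next, let $x=(i,0,k)$ and $y=(i+1,m,k+1)$. Because $x,y$ (and any lifts of $\alpha,\beta$) lie in no common cuboid, condition~(2) of Definition~\ref{def:QP} gives $(i,m,k)\leq_{Q_{\mathcal P}}(i+1,0,k+1)$. This yields a chain
\[
x<(i,1,k)<\cdots<(i,m,k)<_{Q_{\mathcal P}}(i+1,0,k+1)<(i+1,1,k+1)<\cdots<y
\]
of length $2m+1$. A second chain runs inside $\mathbb N^3$ through $\alpha'$: from $x$ to $(i+1,0,k)$, then up the $y$-direction to $(i+1,m,k)$, then to $y$. It has length $m+2$.

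Then I would check that both chains are saturated in $Q_{\mathcal P}$. Any $z$ strictly between consecutive elements must have $\pi(z)\in\{\alpha,\alpha',\beta\}$ and satisfy the relevant comparabilities. Pairs whose projections lie in a common rectangle of $R$ are only comparable through condition~(1), by Remark~\ref{rem:structure}. For example, between $(i,m,k)$ and $(i+1,0,k+1)$ a candidate $z=(i+1,j,k)$ would need $j=m$ to lie above $(i,m,k)$, but then $z\not\leq(i+1,0,k+1)$ in $\mathbb N^3$, and they share a rectangle. So both chains are maximal in $[x,y]_{Q_{\mathcal P}}$. Their lengths $2m+1$ and $m+2$ differ unless $m=1$, contradicting purity. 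Hence $m=1$, and the containment ${\mathcal P}\subseteq[(0,0,0),(l,1,n)]$ after translation follows directly from \cref{setup:polycube}(2).
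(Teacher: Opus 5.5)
\textbf{Overall.} Your plan is the paper's own proof, and your chain computations are correct. After normalizing, your interval $[(i,0,k),(i+1,m,k+1)]_{Q_{\mathcal P}}$ is exactly the paper's $[\alpha,\beta]$. Your two chains (one through $(i,m,k)<_{Q_{\mathcal P}}(i+1,0,k+1)$, one through the lift of the inner corner $\alpha'$) are the paper's two chains, and the purity contradiction is the same. Your check that $(i,m,k)\lessdot_{Q_{\mathcal P}}(i+1,0,k+1)$ is correct; the paper only needs length $2m+r$ with $r\geq1$. Your link argument is an equivalent substitute for the paper's global non-purity of $Q_{\mathcal P}$.

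\textbf{Two facts to state.} The saturation checks silently use the following, both of which follow from convexity:
\begin{itemize}
\item The fourth corner of the missing square is not in $V(R)$. Otherwise its bottom and top edges lie in $R$, so the squares directly below and above it lie in $R$, and vertical convexity then forces the square itself into $R$. This is what gives $\pi(z)\in\{\alpha,\alpha',\beta\}$.
\item Each of the pairs $\{\alpha,\alpha'\}$ and $\{\alpha',\beta\}$ lies in a common rectangle of $R$. The edge joining them lies in $R$, so the square on its far side belongs to $R$.
\end{itemize}

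\textbf{The genuine gap: existence of the corner.} Your suggested route, taking a missing square adjacent to $R$ and using the sublattice property, does not work as stated. A missing square adjacent to $R$ may share only one edge with $R$, and then the sublattice property yields no third corner. For example, let $R$ consist of the squares $[(0,0),(1,1)]$, $[(1,0),(2,1)]$, $[(2,0),(3,1)]$, $[(2,1),(3,2)]$. This $R$ is convex and $V(R)$ is a sublattice. The missing square $[(0,1),(1,2)]$ is adjacent to $R$, yet $(0,2),(1,2)\notin V(R)$. You need a missing square that shares two perpendicular edges with squares of $R$.

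\textbf{How to close it.} The paper obtains such a square from the claim in the proof of Proposition~\ref{pro:primepolycube}. There exist $a$, $i\neq j$ and $\varepsilon,\delta\in\{\pm1\}$ with $C(a),C(a+\varepsilon e_i),C(a+\varepsilon e_i+\delta e_j)\in\mathcal P$ and $C(a+\delta e_j)\notin\mathcal P$. Full $y$-columns force $\{i,j\}=\{1,3\}$. Then:
\begin{itemize}
\item The projection of the missing cube has three corners in $V(R)$, namely those on its two edges shared with the projected cubes.
\item The fourth corner is absent, by the convexity argument above.
\item The three present corners must include the lower-left and upper-right ones. Otherwise the meet or join of the two off-diagonal corners would put the absent corner into $V(R)$, by the sublattice property.
\end{itemize}
This produces exactly your $\alpha$, $\beta$ and $\alpha'\in\{\alpha+e_1,\alpha+e_2\}$, and with it your argument is complete.
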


\begin{proof}
Suppose that $m\geq2$. By the claim in the proof of
Proposition~\ref{pro:primepolycube}, ${\mathcal P}$ contains a missing-corner configuration. Since every nonempty $y$-column of ${\mathcal P}$ contains
exactly the unit-cubes $C(i,j,k)$ with $0\leq j\leq m-1$, the two
directions involved in this configuration must be the $x$- and
$z$-directions. Thus, after translating and, if necessary, reversing
these directions, we may assume that 
\[
C(0,0,0),\quad C(1,0,0),\quad C(1,0,1)\in {\mathcal P},
\qquad
C(0,0,1)\notin {\mathcal P}.
\]

Observe that translations and the interchange of the $x$- and
$z$-coordinates preserve Setup~4.1 and induce isomorphisms of the
corresponding posets. Moreover, reversing both the $x$- and
$z$-directions preserves Setup~4.1 and replaces
$Q_{\mathcal P}$ by its dual. Indeed, using the uniform height of
the $y$-columns, the induced correspondence on vertices may also
reverse the $y$-coordinate, giving an order-reversing bijection.
Thus, in all cases, the order complex $\Delta(Q_{\mathcal P})$
is unchanged up to isomorphism.
Set
\[
\alpha=(0,0,1),\qquad a=(0,m,1),\qquad
b=(1,0,2),\qquad \beta=(1,m,2).
\]
Then $a<_{Q_{\mathcal P}}b$: indeed, $\pi(a)<\pi(b)$, while $a$ and $b$
cannot belong to $L_D$ for any cuboid $D\subseteq {\mathcal P}$, since the
only such cuboid would be
$[a\wedge b,a\vee b]=[\alpha,\beta]$, which contains the missing
unit-cube $C(0,0,1)$.

The interval $[\alpha,\beta]_{Q_{\mathcal P}}$ contains the saturated chain
\[
\alpha<(0,1,1)<\cdots<(0,m,1)
<(1,m,1)<\beta,
\]
of length $m+2$. On the other hand, let
\[
a=d_0<d_1<\cdots<d_r=b
\]
be a saturated chain from $a$ to $b$ in $Q_{\mathcal P}$. Since $a<b$, we have
$r\geq1$. Then
\[
\alpha<(0,1,1)<\cdots<(0,m,1)=d_0
<d_1<\cdots<d_r=(1,0,2)
<(1,1,2)<\cdots<\beta
\]
is a saturated chain of length $2m+r$.

Since $m\geq2$ and $r\geq1$, we have $2m+r>m+2$. Hence
$Q_{\mathcal P}$ is not pure, and therefore $\Delta(Q_{\mathcal P})$ is not pure;
hence, it is not Cohen--Macaulay. By Lemma~\ref{lem:initial-order-complex}, this
contradicts the assumption. Thus $m=1$.
\end{proof}

In order to study the Cohen--Macaulay property of $\Delta(Q_\MP)$, we take a closer look at the structure of $Q_\MP$. We begin by observing that $Q_\MP$ is a bounded poset and it is a lattice only in the trivial case.

\begin{Remark}
Let $\MP$ satisfy \Cref{setup:polycube}. It follows from the definition of $Q_\MP$ that the minimal element of sublattice $V(\MP)$ is the minimal element of $Q_\MP$ and the maximal element of $V(\MP)$ is the maximal element of $Q_\MP$. Hence, $Q_\MP$ is bounded.

The poset $Q_\MP$ is a lattice if and only if $\MP$ is a cuboid.
Indeed, if $\MP$ is a cuboid, then $V(\MP)$ is a distributive lattice and every incomparable pair $a,b\in V(\MP)$ belongs to
$L_D$ for some cuboid $D\subseteq \MP$. Hence $Q_\MP = V(\MP)$.

Suppose now that $\MP$ is not a cuboid. Using the notation from the proof of \cref{prop:height-one}, note that both $(1,m,1)$ and $(1,0,2)$ are upper bounds of $\{(0,m,1), (1,0,1)\}$ in $Q_\MP$. If $c = (0,m,1)\vee (1,0,1)$ existed in $Q_\MP$, then we would have $(1,0,1) \leq_{Q_\MP} c \leq_{Q_\MP} (1,m,1)$. Since $\pi((1,0,1)) \leq \pi(c) \leq \pi((1,m,1))$, we get that $c= (1,j,1)$ for some $0\leq j \leq m$. 

If $j\geq 1$, then $c$ and $(1,0,2)$ are incomparable in $\NN^3$, and belong to $L_D$ where  $D = [(1,0,1),(2,j,2)]$ is a cuboid in $\MP$. Thus, $c = (1,0,1)$. But $c$ and $(0,m,1)$ are incomparable in $Q_\MP$ because they are incomparable in $\NN^3$ and belong to $L_{D'}$, where $D'= [(0,0,0),(1,m,1)]$ is a cuboid in $\MP$. Hence $(0,m,1)\vee (1,0,1)$ does not exist, and
$Q_\MP$ is not a lattice.
\end{Remark}

\begin{Notation}\label{not:SP}
Let ${\mathcal P}$ satisfy Setup~\ref{setup:polycube}, and 
set
\[
{\mathcal S}_{\mathcal P}=
\bigl\{\{a,b\}:a\lessdot_{Q_{\mathcal P}}b
\text{ and }a,b\text{ are incomparable in }\mathbb N^3\bigr\},
\]
and
\[
{\mathcal L}_{\mathcal P}=\{a\in V({\mathcal P}):a\in A\text{ for some }A\in\mathcal S_{\mathcal P}\}.
\]
\end{Notation}


Let $\MP$ be a non-cuboid polycube as in Setup~\ref{setup:polycube} such that $K[\MP]$ is Cohen--Macaulay. Then by Proposition~\ref{prop:height-one}, we may assume that ${\mathcal P}\subseteq[(0,0,0),(l,1,n)]$ for some $l,n\geq1$.
Let $\{a,b\}\in\mathcal S_{\mathcal P}$, with $a=(i_1,j_1,k_1)$ and $ b=(i_2,j_2,k_2),$ where $a\lessdot_{Q_{\mathcal P}}b$. Since $a$ and $b$ are incomparable in
$\mathbb N^3$, while $\pi(a)\leq\pi(b)$, we necessarily have $j_1=1, j_2=0.$ Thus every element of $\mathcal S_{\mathcal P}$ is of the form
\[
\{(i_1,1,k_1),(i_2,0,k_2)\}.
\]


The next few lemmas further restrict the possible form of a cover relation
$a\lessdot_{Q_{\mathcal P}} b$ with $\{a,b\}\in S_{\mathcal P}$.

\begin{Lemma}\label{lem:SP-special}
Let $\mathcal P\subseteq[(0,0,0),(l,1,n)]$ satisfy
Setup~\ref{setup:polycube}, and let $\{a,b\}\in\mathcal S_{\mathcal P}$.

\begin{enumerate}
\item If $a=(i,1,k), b=(i,0,k')$ with $ k'>k$ (see Figure~\ref{fig:vertical}), then
\[
\{a,(i+1,0,k+1)\}\in\mathcal S_{\mathcal P}
\qquad\text{and}\qquad
k'\geq k+3.
\]

\item If $a=(i,1,k),b=(i',0,k)$ with $i'>i$ (see Figure~\ref{fig:horizontal}), then
\[
\{a,(i+1,0,k+1)\}\in\mathcal S_{\mathcal P}
\qquad\text{and}\qquad
i'\geq i+3.
\]
\end{enumerate}
\end{Lemma}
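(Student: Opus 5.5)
The plan is to prove part (1) directly and obtain part (2) by interchanging the $x$- and $z$-coordinates. As noted in the proof of Proposition~\ref{prop:height-one}, this interchange preserves Setup~\ref{setup:polycube} and induces an isomorphism of $Q_{\mathcal P}$, and it carries the configuration of (2) to that of (1).

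So let $a=(i,1,k)$ and $b=(i,0,k')$ with $k'>k$ and $a\lessdot_{Q_{\mathcal P}}b$. Since $a,b$ are incomparable in $\mathbb N^3$, the relation $a<_{Q_{\mathcal P}}b$ comes from condition~(2) of Definition~\ref{def:QP}. Hence no cuboid $D\subseteq\mathcal P$ has $a,b\in L_D$. Any such cuboid would have $y$-range $[0,1]$, $z$-range $[k,k']$, and $x$-range $[i,i'']$ or $[i'',i]$. By convexity and Remark~\ref{rem:structure}, the condition therefore says that neither of the full rectangles $[(i,k),(i+1,k')]$ and $[(i-1,k),(i,k')]$ lies in $\pi(\mathcal P)$.

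On the other hand, convexity gives that the vertical segment from $(i,k)$ to $(i,k')$ lies in $\pi(\mathcal P)$. Hence every unit edge $(i,t)$--$(i,t+1)$ with $k\le t<k'$ is adjacent to a unit square of $\pi(\mathcal P)$ on at least one side.

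The core of the argument is to use the covering condition to exclude intermediate elements. For each vertex $e$ on or next to the segment, I will test whether $a<_{Q_{\mathcal P}}e<_{Q_{\mathcal P}}b$. The natural candidates are $(i,1,t)$ and $(i,0,t)$ for $k<t<k'$, together with $(i\pm1,\ast,t)$.
\begin{itemize}
\item Each comparison is decided by Definition~\ref{def:QP}: comparable in $\mathbb N^3$, or else the projections are comparable and no common cuboid exists.
\item A common cuboid exists exactly when the corresponding rectangle beside the segment is filled.
\end{itemize}

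I first dispose of $k'=k+1$. The edge $(i,k)$--$(i,k+1)$ has an adjacent unit square, so $a,b$ are corners of a unit cuboid, which contradicts $a<_{Q_{\mathcal P}}b$.

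Next I analyse which side squares are present. If $C(i,0,k)\in\mathcal P$, then $(i,1,k+1)$ sits strictly between $a$ and $b$: it lies above $a$ in $\mathbb N^3$, and it lies below $b$ in $Q_{\mathcal P}$ unless a rectangle beside the shorter segment from $(i,k+1)$ to $(i,k')$ is full. Pushing this along the segment contradicts the covering. So I expect to derive $C(i,0,k)\notin\mathcal P$, and by the edge-adjacency fact, $C(i-1,0,k)\in\mathcal P$.

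With this information I will check the following for $d=(i+1,0,k+1)$:
\begin{itemize}
\item $d\in V(\mathcal P)$, by the sublattice property applied to $(i,0,k+1)$ and a suitable vertex with $x$-coordinate $i+1$ coming from the polycube near the segment;
\item $a$ and $d$ are incomparable in $\mathbb N^3$;
\item $\pi(a)\le\pi(d)$, and the only candidate cuboid $C(i,0,k)$ is missing, so $a<_{Q_{\mathcal P}}d$;
\item there is no element strictly between $a$ and $d$, which is checked on the finitely many vertices with projection in $[(i,k),(i+1,k+1)]$.
\end{itemize}
This gives $\{a,d\}\in\mathcal S_{\mathcal P}$.

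Finally, I rule out $k'=k+2$ by exhibiting an intermediate element. The candidate is $(i,1,k+1)$, or $(i,0,k+1)$ and $d$-type vertices, according to which squares beside the edge $(i,k+1)$--$(i,k+2)$ are present. This yields $k'\ge k+3$.

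The main obstacle I anticipate is the bookkeeping in the middle step. I must show that $d$ really is a vertex of $\mathcal P$ and that the left/right square pattern is forced. This needs a careful combined use of convexity, the sublattice condition on $V(\mathcal P)$, and the cover hypothesis, rather than any single earlier lemma. I would organise it as a short case split on which of $C(i-1,0,k)$ and $C(i,0,k)$ belong to $\mathcal P$.
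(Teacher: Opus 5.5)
\textbf{There is a genuine gap.} Your outline matches the paper's: reduce (2) to (1), rephrase ``no common cuboid'' as ``neither strip $[(i-1,k),(i,k')]$ nor $[(i,k),(i+1,k')]$ is full'', then analyse the squares $L_r=C(i-1,0,r)$ and $R_r=C(i,0,r)$ beside the segment. But the mechanisms you name for the two decisive steps fail.

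\textbf{First gap: excluding $R_k\in\mathcal P$.} The covering hypothesis cannot do this. Suppose $R_k,\dots,R_{k'-2}$ and $L_{k+1},\dots,L_{k'-1}$ lie in $\mathcal P$, but $L_k$ and $R_{k'-1}$ do not. Then each $(i,1,t)$ shares a cuboid with $b$ through the left strip, and each $(i,0,t)$ shares one with $a$ through the right strip. So nothing lies strictly between $a$ and $b$, and your ``pushing'' stalls in exactly the ``unless'' clause. A concrete case is the convex polycube $\{C(1,0,0),C(1,0,1),C(0,0,1),C(0,0,2)\}$, which violates only the sublattice condition. For the order defined by the same rule, $(1,1,0)$ is covered by $(1,0,3)$. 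Yet $(1,1,0)$ and $(2,0,1)$ are corners of $C(1,0,0)$, so the conclusion of the lemma fails there.

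The sublattice hypothesis must therefore be used at this step, not for $d\in V(\mathcal P)$:
\begin{itemize}
\item Since neither strip is full, convexity shows that $L_k,L_{k'-1}$ are not both in $\mathcal P$, and likewise $R_k,R_{k'-1}$.
\item If $R_k\in\mathcal P$, then $L_{k'-1}\in\mathcal P$. The vertices of $L_k$ are meets of vertices of $R_k$ and $L_{k'-1}$, hence $L_k\in\mathcal P$, a contradiction.
\item This gives $L_k,R_{k'-1}\in\mathcal P$ and $R_k,L_{k'-1}\notin\mathcal P$.
\item Next, $c=(i,1,k+1)$ must share a cuboid with $b$, since otherwise $a<c<_{Q_{\mathcal P}}b$. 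That cuboid cannot lie to the left, because $L_{k'-1}\notin\mathcal P$, so $R_{k+1}\in\mathcal P$.
\end{itemize}
This is what puts $d$ in $V(\mathcal P)$. It is also what excludes $(i,1,k+1)$ from the interval between $a$ and $d$ in your finite check.

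\textbf{Second gap: ruling out $k'=k+2$.} This cannot be done by exhibiting an intermediate element. Two configurations are allowed there: exactly $L_k,R_{k+1}$, or exactly $R_k,L_{k+1}$, among the four squares. In both, $(i,0,k+1)$ and $a$ are corners of one unit-cube, and $(i,1,k+1)$ and $b$ are corners of another, so $a$ is still covered by $b$. The configuration containing $R_k$ is excluded by the sublattice argument above. The other, where $L_k$ and $R_{k+1}$ meet only along an edge, is locally consistent with both convexity and the sublattice property.

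It is excluded only by face-connectivity, which your plan never uses. A path of unit-cubes from $L_k$ to $R_{k+1}$ crosses the plane $x=i$ at some level $r$ with $L_r,R_r\in\mathcal P$. Convexity in the $z$-direction then forces $R_k\in\mathcal P$ if $r\le k$, or $L_{k+1}\in\mathcal P$ if $r\ge k+1$, a contradiction either way.
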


\begin{proof}
We prove (1); the proof of (2) is analogous, by interchanging the
$x$- and $z$-coordinates.

\begin{figure}[h]
    \begin{subfigure}[t]{7cm}
        \centering
\begin{tikzpicture}[scale=1.5]

\shadecube{0}{0}{0};
\shadecube{0}{1}{0};
\shadecube{1}{1}{0};
\shadecube{1}{2}{0};


\draw[](0,0,0)--(0+1,0,0)--(0+1,0,0-1)--(0,0,0-1)--(0,0,0);
\draw[](0,0+1,0)--(0+1,0+1,0) -- (0+1,0+1,0-1)--(0,0+1,0-1)--(0,0+1,0);
\draw[](0,0,0)--(0,0+1,0);
\draw[] (0+1,0,0)--(0+1,0+1,0);
\draw[] (0,0,0-1)-- (0,0+1,0-1);
\draw[] (0+1,0,0-1)-- (0+1,0+1,0-1);

\draw[dotted, thick](0,1+1,0)--(0+1,1+1,0)  (0+1,1+1,0-1)--(0,1+1,0-1)--(0,1+1,0);
\draw[dotted, thick](0,1,0)--(0,1+1,0);
\draw[dotted, thick] (0+1,1,0)--(0+1,1+1,0);
\draw[dotted, thick] (0+1,1,0-1)-- (0+1,1+1,0-1);

\draw[dotted, thick](1,1,0)--(1+1,1,0)--(1+1,1,0-1)--(1,1,0-1);
 \draw[dotted, thick] (1+1,1,0-1)-- (1+1,1+1,0-1);

\draw[](1,2,0)--(1+1,2,0)--(1+1,2,0-1)--(1,2,0-1)--(1,2,0);
\draw[](1+1,2+1,0)--(1+1,2+1,0-1)--(1,2+1,0-1)--(1,2+1,0);
\draw[](1,2,0)--(1,2+1,0);
 \draw[] (1+1,2,0-1)-- (1+1,2+1,0-1);
\draw[] (1,2,0-1)--(1,2+1,0-1);

\draw[red] (1,0,-1)--(1,3,0);
\draw[red] (1,0,-1)--(2,1,0);

\draw[thick, blue] (1,3,0)--(2,3,0);
\draw[thick, blue] (2,2,0)--(2,3,0);

\draw[dashed, thick, blue] (2,1,0)--(2,2,0);

\node [draw, circle, fill=black, inner sep=.5pt, label=right:{\tiny{{$(i,0,k)$}}}] at (1,0,0) {};
\node [draw, circle, fill=black, inner sep=.5pt, label=left:{\tiny{{$(i,1,k')$}}}] at (1,3,-1) {}; 

\node [draw, circle, fill=black, inner sep=.5pt, label=right:{\tiny{{$a = (i,1,k)$}}}] at (1,0,-1) {}; 
\node [draw, circle, fill=black, inner sep=.5pt, label=left:{\tiny{{$b = (i,0,k')$}}}] at (1,3,0) {}; 

\node [draw, circle, fill=black, inner sep=.5pt, label=right:{\tiny{{$(i+1,0,k+1)$}}}] at (2,1,0) {}; 
\node [draw, circle, fill=black, inner sep=.5pt, label=right:{\tiny{{$(i+1,0,k+2)$}}}] at (2,2,0) {};
\node [draw, circle, fill=black, inner sep=.5pt, label=right:{\tiny{{$c=(i+1,0,k')$}}}] at (2,3,0) {};

\end{tikzpicture}
	\caption{When $i=i'$}
	\label{fig:vertical}
    \end{subfigure}
            \quad
        \begin{subfigure}[t]{7cm}
            \centering
        \begin{tikzpicture}[scale=1.5]

\shadecube{0}{0}{0};
\shadecube{1}{0}{0};
\shadecube{1}{1}{0};
\shadecube{2}{1}{0};
        
\draw[](0,0,0)--(0+1,0,0)--(0+1,0,0-1)--(0,0,0-1)--(0,0,0);
\draw[](0,0+1,0)--(0+1,0+1,0) -- (0+1,0+1,0-1)--(0,0+1,0-1)--(0,0+1,0);
\draw[] (0,0,0-1)--(0,0+1,0-1);
\draw[](0,0,0)--(0,0+1,0);
\draw[] (0+1,0,0)--(0+1,0+1,0);
\draw[] (0+1,0,0-1)-- (0+1,0+1,0-1);

\draw[dotted, thick](1,0,0)--(1+1,0,0)--(1+1,0,0-1)--(1,0,0-1);
\draw[dotted, thick](1,0+1,0)--(1+1,0+1,0) (1+1,0+1,0-1)--(1,0+1,0-1);
\draw[dotted, thick] (1+1,0,0)--(1+1,0+1,0);
\draw[dotted, thick] (1+1,0,0-1)-- (1+1,0+1,0-1);

\draw[dotted, thick](1+1,1+1,0-1)--(1,1+1,0-1)--(1,1+1,0);
\draw[dotted, thick](1,1,0)--(1,1+1,0);
\draw[dotted, thick] (1,1,0-1)--(1,1+1,0-1);

\draw[](2,1,0)--(2+1,1,0)--(2+1,1,0-1)--(2,1,0-1)--(2,1,0);
\draw[](2+1,1+1,0) -- (2+1,1+1,0-1)--(2,1+1,0-1)--(2,1+1,0);
\draw[](2,1,0)--(2,1+1,0);
\draw[] (2+1,1,0-1)-- (2+1,1+1,0-1);
\draw[] (2,1,0-1)--(2,1+1,0-1);

\draw[red] (0,1,-1)--(1,2,0);
\draw[red] (0,1,-1)--(3,1,0);

\draw[thick, blue] (2,2,0)--(3,2,0);
\draw[thick, blue] (3,1,0)--(3,2,0);

\draw[dashed, thick, blue] (1,1+1,0)--(1+1,1+1,0);

\node [draw, circle, fill=black, inner sep=.5pt, label=left:{\tiny{{$(i,1,k)$}}}] at (0,1,-1) {}; 
\node [draw, circle, fill=black, inner sep=.5pt, label=right:{\tiny{{$(i',0,k)$}}}] at (3,1,0) {}; 

\node [draw, circle, fill=black, inner sep=.5pt, label=left:{\tiny{{$(i+1,0,k+1)$}}}] at (1,2,0) {}; 
\node [draw, circle, fill=black, inner sep=.5pt, label=right:{\tiny{{$(i',0,k+1)$}}}] at (3,2,0) {}; 

\end{tikzpicture}
        \caption{When $k=k'$}\label{fig:horizontal}
    \end{subfigure}
        \caption{}
\end{figure}

Since $\{a,b\}\in\mathcal S_{\mathcal P}$, we have
$a\lessdot_{Q_{\mathcal P}}b$, and $a$ and $b$ do not belong to
$L_D$ for any cuboid $D\subseteq\mathcal P$. Since
$V(\mathcal P)$ is a sublattice of $\NN^3$, $(i,0,k)=a\wedge b$ and
$(i,1,k')=a\vee b$ belong to $V(\mathcal P)$. By convexity,
$(i,0,r),(i,1,r)\in V(\mathcal P)$ for every $k\leq r\leq k'$.
Hence, for each $k\leq r<k'$, at least one of
$C(i-1,0,r)$ and $C(i,0,r)$ belongs to $\mathcal P$.

Set
\[
L_r=C(i-1,0,r)
\qquad\text{and}\qquad
R_r=C(i,0,r).
\]
We first claim that
\begin{equation}\label{claim}
L_k,R_{k'-1}\in\mathcal P,
\qquad
R_k,L_{k'-1}\notin\mathcal P.
\end{equation}
Indeed, $L_k$ and $L_{k'-1}$ cannot both belong to $\mathcal P$,
since convexity would imply that all $L_r$, $k\leq r<k'$, belong
to $\mathcal P$, and then $a$ and $b$ would lie in $L_D$ for the
cuboid $D=[(i-1,0,k),(i,1,k')]$. Similarly, $R_k$ and
$R_{k'-1}$ cannot both belong to $\mathcal P$.

Suppose that $R_k\in\mathcal P$. Then $R_{k'-1}\notin\mathcal P$,
so $L_{k'-1}\in\mathcal P$. Since $V(\mathcal P)$ is a sublattice,
the vertices of $L_k$ are obtained as coordinatewise meets of
vertices of $R_k$ and $L_{k'-1}$. Thus $L_k\in\mathcal P$, which
is impossible by the preceding paragraph. Hence
$R_k\notin\mathcal P$, and therefore $L_k\in\mathcal P$. It follows
in turn that $L_{k'-1}\notin\mathcal P$ and
$R_{k'-1}\in\mathcal P$, proving the claim.

Let $c=(i,1,k+1)$. Since $a<c$ in $\mathbb N^3$ and
$a\lessdot_{Q_{\mathcal P}}b$, we cannot have
$c<_{Q_{\mathcal P}}b$. As $\pi(c)\leq\pi(b)$, Definition~\ref{def:QP}
implies that $c$ and $b$ belong to $L_D$ for some cuboid
$D\subseteq\mathcal P$. Since $L_{k'-1}\notin\mathcal P$, this
cuboid must extend to the right of the plane $x=i$. In particular,
$R_{k+1}\in\mathcal P$.

Set $d=(i+1,0,k+1)$. Since $R_k\notin\mathcal P$, there is no
cuboid $D\subseteq\mathcal P$ such that $a,d\in L_D$. As
$\pi(a)<\pi(d)$, Definition~\ref{def:QP} gives
$a<_{Q_{\mathcal P}}d$. Now we show that this is a cover relation. Suppose that $a<_{Q_{\mathcal P}}u<_{Q_{\mathcal P}}d$ for some $u\in V(\mathcal P)$. Then
$\pi(a)\leq\pi(u)\leq\pi(d)$. If $\pi(u)=\pi(a)$, then either $u=a$, or
$u=(i,0,k)<a$ in $\mathbb N^3$, both contradicting
$a<_{Q_{\mathcal P}}u$. Similarly, $\pi(u)\neq\pi(d)$. Hence $\pi(a)<\pi(u)<\pi(d)$, and it follows that
\[
\pi(u)=(i,k+1)\quad\text{or}\quad \pi(u)=(i+1,k).
\]
Suppose first that $\pi(u)=(i,k+1)$. If $u=(i,0,k+1)$, then
$a$ and $u$ are vertices of $L_k\in\mathcal P$, contradicting
$a<_{Q_{\mathcal P}}u$. If $u=(i,1,k+1)$, then $u$ and $d$
are vertices of $R_{k+1}\in\mathcal P$, contradicting
$u<_{Q_{\mathcal P}}d$. Finally, suppose that $\pi(u)=(i+1,k)$. By
Setup~\ref{setup:polycube}, both $(i+1,0,k)$ and $(i+1,1,k)$
belong to $V(\mathcal P)$. Together with
$L_k,R_{k+1}\in\mathcal P$, this gives all vertices of $R_k$ in
$V(\mathcal P)$, and convexity forces $R_k\in\mathcal P$, a
contradiction. Hence no such $u$ exists. Therefore
$a\lessdot_{Q_{\mathcal P}}d$, and consequently
$\{a,d\}\in\mathcal S_{\mathcal P}$.

It remains to prove that $k'\geq k+3$. If $k'=k+1$, then
$a$ and $b$ both belong to $L_{k}$, a contradiction. Suppose that $k'=k+2$. Using (\ref{claim}) with $k'-1=k+1$ gives
\[
L_k,R_{k+1}\in\mathcal P,
\qquad
R_k,L_{k+1}\notin\mathcal P.
\]
Since $\mathcal P$ is connected, a path of unit-cubes from $L_k$
to $R_{k+1}$ must cross the plane $x=i$. Hence, for some integer
$r$, both $L_r$ and $R_r$ belong to $\mathcal P$. If $r\leq k$,
convexity applied to $R_r$ and $R_{k+1}$ yields
$R_k\in\mathcal P$, a contradiction. If $r\geq k+1$, convexity applied to $L_k$
and $L_r$ yields $L_{k+1}\in\mathcal P$, a contradiction. Therefore $k'\neq k+2$, and hence $k'\geq k+3$.
\end{proof}


\begin{lemma}\label{lem:same-coordinate}
Let $\mathcal P\subseteq[(0,0,0),(l,1,n)]$ satisfy
Setup~\ref{setup:polycube}, and let
$\{a,b\}\in\mathcal S_{\mathcal P}$, where $a=(i,1,k), b=(i',0,k')$. If $i=i'$ or $k=k'$, then
$Q_{\mathcal P}$ is not Cohen--Macaulay and
$\ML_{\mathcal P}$ is not a chain.
\end{lemma}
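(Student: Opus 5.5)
By symmetry (interchanging the $x$- and $z$-coordinates) it suffices to treat the case $i=i'$, so $a=(i,1,k)$ and $b=(i,0,k')$. Then $k'>k$, since $\pi(a)\leq\pi(b)$ and $a\neq b$. By Lemma~\ref{lem:SP-special}(1), $d=(i+1,0,k+1)$ satisfies $\{a,d\}\in\mathcal S_{\mathcal P}$, and $k'\geq k+3$. From its proof, $L_k,R_{k'-1},R_{k+1}\in\mathcal P$ and $R_k,L_{k'-1}\notin\mathcal P$.

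\textbf{$\mathcal L_{\mathcal P}$ is not a chain.} The elements $b=(i,0,k')$ and $d=(i+1,0,k+1)$ both lie in $\mathcal L_{\mathcal P}$. Their projections $(i,k')$ and $(i+1,k+1)$ are incomparable in $\mathbb N^2$, because $k'>k+1$. Any two elements comparable in $Q_{\mathcal P}$ have comparable projections, since both clauses of Definition~\ref{def:QP} force $\pi(a)\leq\pi(b)$. Hence $b$ and $d$ are incomparable in $Q_{\mathcal P}$, and $\mathcal L_{\mathcal P}$ is not a chain.

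\textbf{$\Delta(Q_{\mathcal P})$ is not Cohen--Macaulay.} I would exhibit an interval of $Q_{\mathcal P}$ that fails the homological condition of Lemma~\ref{lem:intervalCM}, or more simply two saturated chains of different lengths between the same endpoints, which shows non-purity as in Proposition~\ref{prop:height-one}. Take the endpoints $\alpha=a=(i,1,k)$ and $\beta=(i+1,0,k')$; note that $\beta=c$ in Figure~\ref{fig:vertical}, and $\beta\in V(\mathcal P)$ because $R_{k'-1}\in\mathcal P$. Compare two chains.
\begin{itemize}
\item Through $b$: $a\lessdot b=(i,0,k')\lessdot(i+1,0,k')$. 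The last step is a cover because $\pi$ increases by a unit vector and $y$ is constant, so any intermediate element must project to $(i,k')$ or $(i+1,k')$, which is impossible. This chain has length $2$.
\item Through $d$: $a\lessdot d=(i+1,0,k+1)\lessdot(i+1,0,k+2)\lessdot\cdots\lessdot(i+1,0,k')$. Each step is a unit step in $z$ with $y=0$, hence a cover by the same projection argument. This chain has length $1+(k'-k-1)=k'-k\geq 3$.
\end{itemize}
Since $2\neq k'-k$, the interval $[a,\beta]_{Q_{\mathcal P}}$ is not pure. Then $\Delta(Q_{\mathcal P})$ is not pure, because every maximal chain of $[a,\beta]$ extends to a maximal chain of the bounded poset $Q_{\mathcal P}$ by appending fixed saturated chains below $a$ and above $\beta$. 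A non-pure complex is not Cohen--Macaulay.

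\textbf{Main obstacle.} The delicate step is confirming that each displayed relation is really a cover in $Q_{\mathcal P}$. The covers $a\lessdot b$ and $a\lessdot d$ are given by hypothesis and by Lemma~\ref{lem:SP-special}. For the other steps, the possible intermediate elements must project onto one of the two endpoint projections, and the check requires care with the possible $y$-values ($0$ or $1$) at each projection. If some step fails to be a cover, the fallback is to compute the open interval $(a,\beta)_{Q_{\mathcal P}}$ directly. I expect it to consist of two disjoint chains: $\{b\}$ on one side and $\{d,\dots,(i+1,0,k'-1)\}$ on the other, possibly together with some $y=1$ vertices, which the missing cubes $R_k$ and $L_{k'-1}$ should rule out. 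That interval is disconnected, so its order complex has nonzero $\tilde H_0$ while its rank is positive, and Lemma~\ref{lem:intervalCM} again shows $\Delta(Q_{\mathcal P})$ is not Cohen--Macaulay.
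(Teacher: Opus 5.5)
Your argument is correct and is essentially the paper's: the paper uses the same interval with top $c=(i+1,0,k')=b\vee d$ and the same facts, namely $a\lessdot b\lessdot c$ and the chain $d<\cdots<(i+1,0,k'-1)$ inside $(a,c)$. It phrases the failure as $b$ being an isolated vertex of the at least one-dimensional complex $\Delta((a,c))$ and then invokes Lemma~\ref{lem:intervalCM}, which is your fallback; you read off non-purity from the same data. Your justification that $b$ and $d$ are incomparable (their projections are incomparable) fills in a step the paper only asserts.

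The one step you must write out is $b\lessdot c$, because your stated reason does not suffice. The only candidate intermediate element is $(i,1,k')=a\vee b\in V(\mathcal P)$. It is excluded only because it and $c$ are corners of $R_{k'-1}\in\mathcal P$; without such a cuboid, clause~(2) of Definition~\ref{def:QP} would place it strictly between $b$ and $c$.

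The long chain need not be saturated at all. Any maximal chain of $[a,c]$ containing $d<\cdots<(i+1,0,k'-1)$ already has length at least $k'-k\geq 3>2$.
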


\begin{proof}
We prove the case $i=i'$; the case $k=k'$ follows analogously by
interchanging the $x$- and $z$-coordinates. Thus
$a=(i,1,k)$ and $b=(i,0,k')$. Since $a\lessdot_{Q_{\mathcal P}}b$, we have $k'>k$. 

Set $d=(i+1,0,k+1)$. By Lemma~\ref{lem:SP-special},
$\{a,d\}\in\mathcal S_{\mathcal P}$ and $k'\geq k+3$. Let
$c=(i+1,0,k')=b\vee d$. Since $V(\mathcal P)$ is a sublattice,
$c\in V(\mathcal P)$. With the notation used in the proof of
Lemma~\ref{lem:SP-special}, we have
$R_{k+1},R_{k'-1}\in\mathcal P$. By convexity,
$R_r\in\mathcal P$ for every $k+1\leq r\leq k'-1$. Hence
\[
d=(i+1,0,k+1)<\cdots<(i+1,0,k'-1)
\]
is a chain in $Q_{\mathcal P}$ contained in the open interval $(a,c)$,
as indicated in blue in Figure~4(a). Moreover, this chain has length at least one because $k'\geq k+3$.

On the other hand, from $R_{k'-1}\in\mathcal P$ it follows that $b\lessdot_{Q_{\mathcal P}}c$.
Since $a\lessdot_{Q_{\mathcal P}}b$ and
$b\lessdot_{Q_{\mathcal P}}c$, the vertex $b$ is isolated in the order complex of $(a,c)$. Hence $\Delta((a,c))$ is a disconnected simplicial complex of dimension $\geq 1$ and therefore is not Cohen--Macaulay.  By Lemma~\ref{lem:intervalCM}, it follows that $Q_{\mathcal P}$ is not Cohen--Macaulay.

Finally, $a,b,d\in \ML_{\mathcal P}$, while $b$ and $d$ are
incomparable in $Q_{\mathcal P}$. Therefore
$\ML_{\mathcal P}$ is not a chain.
\end{proof}

The next lemma shows that if a cover relation in $S_{\mathcal P}$ changes
both the $x$- and $z$-coordinates, then it changes each of them by exactly one.

\begin{lemma}\label{lem:SP-diagonal}
Let $\mathcal P\subseteq[(0,0,0),(l,1,n)]$ satisfy
Setup~\ref{setup:polycube}, and let
$\{a,b\}\in\mathcal S_{\mathcal P}$, where $a=(i_1,1,k_1), b=(i_2,0,k_2)$.
If $i_1\neq i_2$ and
$k_1\neq k_2$, then
$i_2=i_1+1$ and $k_2=k_1+1$.
\end{lemma}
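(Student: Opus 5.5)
We argue by contradiction, producing an element $u$ with $a<_{Q_{\mathcal P}}u<_{Q_{\mathcal P}}b$, which contradicts $a\lessdot_{Q_{\mathcal P}}b$. First the setup. Since $a<_{Q_{\mathcal P}}b$ and $a,b$ are incomparable in $\mathbb N^3$, condition~(2) of Definition~\ref{def:QP} gives $\pi(a)\le\pi(b)$. Together with $i_1\neq i_2$ and $k_1\neq k_2$, this means $i_1<i_2$ and $k_1<k_2$.

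Next we show that every lattice point of the rectangle $R=[(i_1,k_1),(i_2,k_2)]$ is the projection of a vertex. Because $V(\mathcal P)$ is a sublattice, $a\wedge b=(i_1,0,k_1)$ and $a\vee b=(i_2,1,k_2)$ lie in $V(\mathcal P)$, and so do $(i_1,\ast,k_2)$ and $(i_2,\ast,k_1)$. Convexity along the $x$- and $z$-directions, as in the proof of Lemma~\ref{lem:projection}, then shows that every lattice point of $R$ is $\pi$ of a vertex. By Remark~\ref{rem:structure} this holds at both heights $y=0$ and $y=1$. Since $a,b$ lie in no common $L_D$ and $m=1$, the rectangle $R$ is \emph{not} contained in $\pi(\mathcal P)$; otherwise $R\times[0,1]$ would be a cuboid with $a,b$ as corners.

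Now assume, say, $i_2\ge i_1+2$; the case $k_2\ge k_1+2$ is symmetric under interchanging $x$ and $z$. Set $p=(i_1+1,k_1+1)$, which satisfies $\pi(a)<p<\pi(b)$ with $p\neq\pi(b)$, and consider the two vertices $u_1=(i_1+1,1,k_1+1)$ and $u_0=(i_1+1,0,k_1+1)$.
\begin{itemize}
\item We have $a<u_1$ in $\mathbb N^3$. Hence $u_1$ is a witness ($a<_{Q_{\mathcal P}}u_1<_{Q_{\mathcal P}}b$) unless $u_1$ and $b$ are corners of a common cuboid.
\item We have $u_0<b$ in $\mathbb N^3$. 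Hence $u_0$ is a witness unless $a$ and $u_0$ are corners of a common cuboid. Here $a,u_0$ differ in all three coordinates, so that cuboid can only be $C(i_1,0,k_1)$. Thus $u_0$ fails only if the unit square $S=[(i_1,k_1),(i_1+1,k_1+1)]$ lies in $\pi(\mathcal P)$.
\end{itemize}
So it remains to treat the case $S\subseteq\pi(\mathcal P)$ and $u_1,b\in L_D$ for some cuboid $D$. If $k_2\ge k_1+2$, then $D$ is forced to project onto $[p,\pi(b)]$. By convexity and the sublattice property, the unit squares of $R$ not covered by $S$ or $\pi(D)$ are then filled in. Indeed, the missing cubes along the bottom row and the left column are recovered, as in the proof of the claim in Lemma~\ref{lem:SP-special}: the vertices of a missing cube are meets or joins of vertices of cubes already present, and convexity then forces that cube into $\mathcal P$. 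This gives $R\subseteq\pi(\mathcal P)$, a contradiction.

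The main obstacle is the degenerate case $k_2=k_1+1$ (with $i_2\ge i_1+2$). Here $u_1$ and $b$ have the same $z$-coordinate, so $D$ may project onto $[(i_1+1,k_1+1),(i_2,k_1+2)]$, lying above $R$ rather than inside it. In this case we would imitate the argument of Lemma~\ref{lem:SP-special}. The cube $S$ is present at the left end of the row $R$, the row directly above is present from $x=i_1+1$ to $x=i_2$, and some unit square of $R$ is missing. We would derive a contradiction from convexity in the $z$-direction together with the sublattice property (meets of vertices of $S$ with vertices of the upper row). If this fails, we would instead try a different intermediate vertex, for example $(i_2-1,0,k_1)$ or $(i_1+1,1,k_1)$, and show that it lies strictly between $a$ and $b$ in $Q_{\mathcal P}$, using that $R$ has a missing square. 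Once both $i_2\ge i_1+2$ and $k_2\ge k_1+2$ are excluded, we obtain $i_2=i_1+1$ and $k_2=k_1+1$.
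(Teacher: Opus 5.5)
Your argument breaks down at Step~2, and the later steps do not recover. You claim that $(i_1,\ast,k_2)$ and $(i_2,\ast,k_1)$ lie in $V(\mathcal P)$ by the sublattice property. But $a$, $b$, $a\wedge b$, $a\vee b$ all project onto the two \emph{comparable} corners $(i_1,k_1)$ and $(i_2,k_2)$ of $R$, so no meet or join of them reaches the other two corners. In Lemma~\ref{lem:projection} this worked only because the projections were incomparable.

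The claim is in fact false, and it contradicts your own Step~3. If every lattice point of $R$ were the projection of vertices at heights $0$ and $1$, convexity along the three axes would put the centre of every unit cube over $R$ into $\mathcal P$, giving $R\times[0,1]\subseteq\mathcal P$. So Step~2 would rule out even the covers $(i,1,k)\lessdot_{Q_{\mathcal P}}(i+1,0,k+1)$ that the lemma permits and that do occur (e.g.\ for Ferrers diagrams). Hence you have no argument that $u_0,u_1\in V(\mathcal P)$, and the Setup hypotheses together with $a<_{Q_{\mathcal P}}b$ do not supply one. For $m=1$ and $\pi(\mathcal P)$ the L-shape $[(0,0),(3,1)]\cup[(2,1),(3,3)]$, the pair $a=(0,1,1)$, $b=(3,0,3)$ has $a<_{Q_{\mathcal P}}b$, yet $(1,2)$ is not a vertex.

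Even granting $u_0,u_1$, the fill-in step for $k_2\ge k_1+2$ is wrong. The rectangles $S$ and $\pi(D)$ meet only at $p$, so every vertex of $S$ lies below every vertex of $\pi(D)$, and their meets and joins produce nothing new. (In Lemma~\ref{lem:SP-special} the relevant cubes sat in crossing position, which is why meets worked there.) Concretely, let $m=1$ and let $\pi(\mathcal P)$ consist of the unit squares with lower-left corners $(0,0),(1,0),(1,1),(2,1),(1,2),(2,2)$; this is convex with sublattice vertex set. Take $a=(0,1,0)$, $b=(3,0,3)$. Then $C(0,0,0)\in\mathcal P$, and $u_1,b$ are corners of $[(1,0,1),(3,1,3)]\subseteq\mathcal P$, so both of your witnesses fail. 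Yet $C(2,0,0),C(0,0,1),C(0,0,2)\notin\mathcal P$. What excludes this configuration is the cover hypothesis applied to yet another vertex, namely $(2,0,3)$. Together with the case $k_2=k_1+1$, which you leave as a plan, this means the proposal does not prove the lemma.

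The paper instead takes witnesses next to $b$ rather than at the diagonal point. One of $u=(i_2-1,0,k_2)$, $v=(i_2,0,k_2-1)$ lies in $V(\mathcal P)$, since a saturated chain of $\mathbb N^3$ inside $V(\mathcal P)$ from $a\wedge b$ to $b$ ends in one of them. If $u\in V(\mathcal P)$, then $a\not<_{Q_{\mathcal P}}u$ forces the cuboid $[(i_1,0,k_1),(i_2-1,1,k_2)]$, which yields $c=(i_2-1,1,k_1)>a$. Then $c\not<_{Q_{\mathcal P}}b$ forces $[(i_2-1,0,k_1),(i_2,1,k_2)]$. These two cuboids share a face and together form $[a\wedge b,a\vee b]$, a contradiction. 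If $u\notin V(\mathcal P)$, the symmetric argument with $v$ handles $k_2\ge k_1+2$. For $k_2=k_1+1$, convexity at height $1$ between $a$ and $(i_2,1,k_1)$ gives $c$, and $c\not<_{Q_{\mathcal P}}b$ then forces $u\in V(\mathcal P)$, again a contradiction.
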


\begin{proof}
By Definition~\ref{def:QP}, we have $i_1<i_2$ and $k_1<k_2$.
Suppose that $i_1+1<i_2$, and set
$p=a\wedge b=(i_1,0,k_1)\in V(\mathcal P)$.

Since $\mathcal P$ is connected and $V(\mathcal P)$ is an induced sublattice of $\mathbb N^3$, we may choose a saturated chain in
$V(\mathcal P)$ from $p$ to $b$ which is also saturated in $\mathbb N^3$. Hence this chain must pass through one of
\[
u=(i_2-1,0,k_2)\qquad\text{or}\qquad
v=(i_2,0,k_2-1).
\]
Therefore at least one of these vertices belongs to
$V(\mathcal P)$.

\begin{figure}
        \centering
\begin{tikzpicture}[scale=2.5]
\unitcube{0}{0}{0};

\node [draw, circle, fill=black, inner sep=.5pt, label=left:{\tiny{{$p=(i_1,0,k_1)$}}}] at (0,0,0) {}; 
\node [draw, circle, fill=black, inner sep=.5pt, label=left:{\tiny{{$a=(i_1,1,k_1)$}}}] at (0,0,-1) {}; 
\node [draw, circle, fill=black, inner sep=.5pt, label=right:{\tiny{{$c=(i_2-1,1,k_1)$}}}] at (1,0,-1) {}; 
\node [draw, circle, fill=black, inner sep=.5pt, label=right:{\tiny{{$u=(i_2-1,0,k_2)$}}}] at (1,1,0) {}; 
\node [draw, circle, fill=black, inner sep=.5pt, label=right:{\tiny{{$(i_2-1,1,k_2)$}}}] at (1,1,-1) {}; 
\node [draw, circle, fill=black, inner sep=.5pt, label=right:{\tiny{{$(i_2-1,0,k_1)$}}}] at (1,0,0) {}; 

\end{tikzpicture}
	\caption{The cuboid $D_1$ in the proof of \cref{lem:SP-diagonal}}
	\label{fig:relations}
\end{figure}

Suppose first that $u\in V(\mathcal P)$. Since $u<b$ and
$a\lessdot_{Q_{\mathcal P}}b$, we cannot have
$a<_{Q_{\mathcal P}}u$. Thus, by Definition~\ref{def:QP},
$a$ and $u$ belong to $L_{D_1}$ for some cuboid
$D_1\subseteq\mathcal P$; see Figure~\ref{fig:relations}.
Necessarily, $D_1=[p,(i_2-1,1,k_2)].$ In particular, $c=(i_2-1,1,k_1)\in V(\mathcal P)$. Since
$a<c$, the cover relation $a\lessdot_{Q_{\mathcal P}}b$ implies
$c\not<_{Q_{\mathcal P}}b$. Hence $c$ and $b$ belong to
$L_{D_2}$ for a cuboid $D_2\subseteq\mathcal P$, necessarily $D_2=[(i_2-1,0,k_1),(i_2,1,k_2)].$ Thus
\[
D_1\cup D_2=[(i_1,0,k_1),(i_2,1,k_2)]\subseteq\mathcal P,
\]
contradicting $\{a,b\}\in\mathcal S_{\mathcal P}$. Therefore
$u\notin V(\mathcal P)$, and consequently $v\in V(\mathcal P)$. 

If $k_2>k_1+1$, the preceding argument, with the $x$- and
$z$-coordinates interchanged and applied to $v$, gives the same
contradiction. Hence $k_2=k_1+1$, so
$v=(i_2,0,k_1)$. By \cref{rem:structure},
$(i_2,1,k_1)\in V(\mathcal P)$. Since also
$a=(i_1,1,k_1)\in V(\mathcal P)$, convexity gives
$c=(i_2-1,1,k_1)\in V(\mathcal P)$. Again $a<c$, so $c\not<_{Q_{\mathcal P}}b$. Therefore $c$ and
$b$ belong to the lattice associated to the cuboid $[(i_2-1,0,k_1),(i_2,1,k_1+1)]\subseteq\mathcal P.$ In particular,
\[
u=(i_2-1,0,k_1+1)=(i_2-1,0,k_2)\in V(\mathcal P),
\]
contradicting $u\notin V(\mathcal P)$. Therefore $i_2=i_1+1$. By symmetry, $k_2=k_1+1$.
\end{proof}


 We are now ready to prove the second necessary condition for
$\Delta(Q_{\mathcal P})$ to be Cohen--Macaulay.

\begin{Theorem}\label{thm:LP-chain}
Let $\mathcal P$ satisfy Setup~\ref{setup:polycube}. If
$Q_{\mathcal P}$ is Cohen--Macaulay, then
${\mathcal L}_{\mathcal P}$ is a chain in $Q_{\mathcal P}$.
\end{Theorem}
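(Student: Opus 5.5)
We argue by contraposition: assuming $\mathcal L_{\mathcal P}$ is not a chain, we exhibit an interval of $Q_{\mathcal P}$ whose order complex violates the criterion of Lemma~\ref{lem:intervalCM}.

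\emph{Reductions.} If $\mathcal P$ is a cuboid, then $Q_{\mathcal P}=V(\mathcal P)$ and $\mathcal S_{\mathcal P}=\emptyset$, so there is nothing to prove. Otherwise, Proposition~\ref{prop:height-one} (with Lemma~\ref{lem:initial-order-complex}) lets us assume $\mathcal P\subseteq[(0,0,0),(l,1,n)]$. Every element of $\mathcal S_{\mathcal P}$ then has the form $\{(i_1,1,k_1),(i_2,0,k_2)\}$. By Lemma~\ref{lem:same-coordinate} we may assume $i_1\neq i_2$ and $k_1\neq k_2$, and Lemma~\ref{lem:SP-diagonal} then forces $(i_2,k_2)=(i_1+1,k_1+1)$. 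So every cover in $\mathcal S_{\mathcal P}$ is a diagonal step $a=(i,1,k)\lessdot_{Q_{\mathcal P}}(i+1,0,k+1)$ across a missing corner.

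\emph{Locating a failure.} Suppose $\mathcal L_{\mathcal P}$ is not a chain, and choose incomparable $x,y\in\mathcal L_{\mathcal P}$. We want two diagonal covers $\{a,b\},\{a',b'\}\in\mathcal S_{\mathcal P}$ whose projections are $\mathbb N^2$-incomparable, or which otherwise interact badly. The convexity/sublattice structure of $\pi(\mathcal P)$ is a staircase-type region, and its missing corners lie on its lower-left and upper-right boundaries. Each diagonal cover sits at a reflex corner of this boundary. Two such corners that are not nested in $Q_{\mathcal P}$ should come from two different reflex corners, for instance one on the lower boundary and one on the upper boundary, or two corners on the same boundary at incomparable positions. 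For such a pair, take $\alpha=a\wedge a'$ (or the bottom element) and $\beta=b\vee b'$, so that the interval $[\alpha,\beta]_{Q_{\mathcal P}}$ contains both covers. We then compute its maximal chains. A chain passing through a diagonal cover $(i,1,k)\lessdot(i+1,0,k+1)$ gains length differently from a chain going around through $(i+1,1,k+1)$. Concretely, compare $(i,0,k)<(i,1,k)<(i+1,0,k+1)<(i+1,1,k+1)$, which has length $3$, with a chain through $\mathbb N^3$-covers, which has length $3$ as well. Since these lengths agree, purity is not the obstruction, and we must instead look at homology. We expect the open interval $(\alpha,\beta)$ to contain two "routes" that meet only at the ends, and therefore to be disconnected (or to have $\tilde H_1\neq 0$) while its rank is at least $1$ (resp.\ $2$). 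This would mimic the argument of Lemma~\ref{lem:same-coordinate}, where an element was isolated in $\Delta((a,c))$.

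\emph{Main obstacle.} The hard step is the case analysis on the relative positions of the two reflex corners. For each configuration we must pick the correct interval and verify which pairs are comparable in $Q_{\mathcal P}$. That verification means checking cuboid membership through Definition~\ref{def:QP}, using convexity and the sublattice property just as in the proof of Lemma~\ref{lem:SP-special}. We would first treat the case where the two corners lie on opposite boundaries with $a\not\le_{Q_{\mathcal P}}a'$, and show that $(a\wedge a',\,b\vee b')$ disconnects into two components. The same-boundary case would then follow by a similar argument, or be reduced to it via the order-dual symmetry noted in the proof of Proposition~\ref{prop:height-one}.
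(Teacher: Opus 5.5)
\paragraph{Verdict.} Your proposal is a correct plan but not a proof: it stops where the real work begins. Your reductions match the paper's. These are the cuboid case, $m=1$ by Proposition~\ref{prop:height-one}, and the fact that every cover in $\mathcal S_{\mathcal P}$ is a diagonal step $(i,1,k)\lessdot(i+1,0,k+1)$ by Lemmas~\ref{lem:same-coordinate} and~\ref{lem:SP-diagonal}. Your interval $[a\wedge a',b\vee b']$ is also exactly the one the paper uses. After that, everything is phrased as an expectation, and the steps that prove the theorem are missing.

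\paragraph{Choosing the pair of covers.} You need a precise passage from ``$\mathcal L_{\mathcal P}$ is not a chain'' to a specific pair of covers. Let $T_{\mathcal P}$ be the set of $(i,k)$ with $\{(i,1,k),(i+1,0,k+1)\}\in\mathcal S_{\mathcal P}$. The key observation is that $(i,k),(j,h)\in T_{\mathcal P}$ with $i<j$ and $k<h$ give a chain $(i,1,k)<(i+1,0,k+1)\le(j,1,h)<(j+1,0,h+1)$. So if $\mathcal L_{\mathcal P}$ is not a chain, there is a pair with $i=j$, or $k=h$, or $i<j$ and $k>h$.

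Your proposed case split does not match this.
\begin{itemize}
\item Two such corners on the same monotone boundary path are automatically strictly ordered in both coordinates, since each occupies a stretch of the path from $(i,k)$ to $(i+1,k+1)$. So your ``same boundary, incomparable positions'' case is empty.
\item The case $i=j$, where the projections \emph{are} comparable, is exactly what your ``projections $\mathbb N^2$-incomparable'' criterion misses, and ``otherwise interact badly'' does not pin it down.
\item Order duality maps each of the three configuration types to itself, so it reduces nothing. The case $k=h$ reduces to $i=j$ by interchanging the $x$- and $z$-coordinates.
\end{itemize}

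\paragraph{The two-chain decomposition.} The heart of the proof is showing that $(\alpha,\beta)$ is a disjoint union of two mutually incomparable chains. This requires control over every $\mathcal S_{\mathcal P}$-cover inside $[\alpha,\beta]$. The paper chooses the bad pair minimizing $|i-j|+|k-h|$, so the two chosen covers are the only elements of $\mathcal S_{\mathcal P}$ in $[\alpha,\beta]$.
\begin{itemize}
\item Take $w\in(\alpha,\beta)$ with second coordinate $1$. A saturated chain from $w$ to $\beta$ must drop from level $1$ to level $0$ through an $\mathcal S_{\mathcal P}$-cover, hence through one of the chosen two. This confines $w$ to one of two explicit chains.
\item Level $0$ is handled symmetrically, using saturated chains from $\alpha$.
\item Any comparability between the two chains would produce a third $\mathcal S_{\mathcal P}$-cover in $[\alpha,\beta]$.
\end{itemize}
Without this argument, ``two routes meeting only at the ends'' is unsupported.

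\paragraph{The dimension bound.} Disconnectedness contradicts Lemma~\ref{lem:intervalCM} only if $\dim\Delta((\alpha,\beta))\ge1$. In the case $i=j$, $k<h$ this needs $h\ge k+2$; for $h=k+1$ each chain would be a single point, giving no contradiction. The paper excludes $h=k+1$ by showing that $(i,1,k+1)$ and $(i+1,0,k+1)$ would then either:
\begin{itemize}
\item form an $\mathcal S_{\mathcal P}$-cover with equal third coordinates, contradicting Lemma~\ref{lem:same-coordinate}; or
\item lie in a common cuboid containing one of the missing cubes $C(i,0,k)$, $C(i,0,k+1)$.
\end{itemize}
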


\begin{proof}
If $\mathcal P$ is a cuboid, then
$\mathcal S_{\mathcal P}=\emptyset$, and there is nothing to prove.
Thus assume that $\mathcal P$ is not a cuboid. By
Proposition~\ref{prop:height-one}, after a
translation in the $x$- and $z$-coordinates, we may assume that
\[
\mathcal P\subseteq[(0,0,0),(l,1,n)]
\]
for some $l,n\geq1$. By Lemmas~\ref{lem:same-coordinate} and~\ref{lem:SP-diagonal},
every element of $\mathcal S_{\mathcal P}$ is of the form
\[
\{(i,1,k),(i+1,0,k+1)\}.
\]
Set
\[
T_{\mathcal P}
=
\{(i,k):
\{(i,1,k),(i+1,0,k+1)\}\in\mathcal S_{\mathcal P}\}.
\]
Notice that if $(i,k)\in T_{\mathcal P}$, then
$C(i,0,k)\notin\mathcal P$.

Suppose that ${\mathcal L}_{\mathcal P}$ is not a chain.
If $(i,k),(j,h)\in T_{\mathcal P}$ satisfy $i<j$ and $k<h$, then
\[
(i,1,k)<_{Q_{\mathcal P}}(i+1,0,k+1)
\leq_{Q_{\mathcal P}}(j,1,h)
<_{Q_{\mathcal P}}(j+1,0,h+1).
\]
Hence there must exist two points of $T_{\mathcal P}$ which are
not strictly ordered in both coordinates. Choose such a pair
$(i,k),(j,h)$ minimizing $|i-j|+|k-h|$.

After interchanging the two points, we may assume $i\leq j$.
If $i=j$, we may assume $k<h$ (see \cref{fig:casevertical}). If $i<j$, then $k\geq h$; when
$k=h$ (see \cref{fig:casehorizontal}), interchanging the $x$- and $z$-coordinates reduces to the
previous case. Thus it remains to consider
\[
\text{\rm (1)}\quad i=j,\ k<h,
\qquad\text{or}\qquad
\text{\rm (2)}\quad i<j,\ k>h.
\]
By the minimality of the chosen pair, in case~(1) there is no
$(i,s)\in T_{\mathcal P}$ with $k<s<h$. In case~(2), there is no
$(r,s)\in T_{\mathcal P}$ with
\[
i\leq r\leq j,\qquad h\leq s\leq k,
\]
other than $(i,k)$ and $(j,h)$.

\noindent\textit{Case (1).} Suppose that $i=j$ and $k<h$. We claim that
$h\geq k+2$. Assume that $h=k+1$, and set $q=(i,1,k+1), r=(i+1,0,k+1).$ If $q$ and $r$ do not belong to $L_D$ for any cuboid
$D\subseteq\mathcal P$, then
$q\lessdot_{Q_{\mathcal P}}r$, and hence
$\{q,r\}\in\mathcal S_{\mathcal P}$. This contradicts
Lemma~\ref{lem:same-coordinate}, since $q$ and $r$ have the same
third coordinate. Thus $q,r\in L_D$ for some cuboid $D\subseteq\mathcal P$.
Since their third coordinates are equal, $k+1$ is one of the two
endpoints of $D$ in the $z$-direction. Consequently, $D$ contains
either $C(i,0,k)$ or $C(i,0,k+1)$. Both are impossible, since
$(i,k),(i,k+1)\in T_{\mathcal P}$. Hence $h\geq k+2$.

\begin{figure}%
	\begin{subfigure}[t]{6.5cm}
		\centering
		\begin{tikzpicture}[scale=1.5]

\shadecube{0}{0}{0};
\shadecube{0}{1}{0};
\shadecube{1}{1}{0};
\shadecube{2}{1}{0};
\shadecube{2}{2}{0};

\draw[](0,0,0)--(0+1,0,0)--(0+1,0,0-1)--(0,0,0-1)--(0,0,0);
\draw[](0,0+1,0)--(0+1,0+1,0) -- (0+1,0+1,0-1)--(0,0+1,0-1)--(0,0+1,0);
\draw[] (0,0,0-1)--(0,0+1,0-1);
\draw[](0,0,0)--(0,0+1,0);
\draw[] (0+1,0,0)--(0+1,0+1,0);

\draw[dotted, thick](0,1+1,0)--(0+1,1+1,0) -- (0+1,1+1,0-1)--(0,1+1,0-1)--(0,1+1,0);
\draw[dotted, thick](0,1,0)--(0,1+1,0);
\draw[dotted, thick] (0,1,0-1)--(0,1+1,0-1);
\draw[dotted, thick] (0+1,1,0)--(0+1,1+1,0);

 \draw[dotted, thick](1,1,0)--(1+1,1,0)--(1+1,1,0-1)--(1,1,0-1);
 \draw[dotted, thick](1,1+1,0)--(1+1,1+1,0) (1+1,1+1,0-1)--(1,1+1,0-1);

\draw[dotted, thick](2,1,0)--(2+1,1,0)--(2+1,1,0-1)--(2,1,0-1);

\draw[dotted, thick] (2+1,1,0-1)-- (2+1,1+1,0-1);
\draw[dotted, thick] (2,1,0-1)--(2,1+1,0-1);
\draw[dotted, thick] (2+1,1,0)--(2+1,1+1,0);

\draw[](2,2,0)--(2+1,2,0)--(2+1,2,0-1)--(2,2,0-1)--(2,2,0);
\draw[](2+1,2+1,0) -- (2+1,2+1,0-1)--(2,2+1,0-1)--(2,2+1,0);
\draw[](2,2+1,0)--(2+1,2+1,0);
\draw[] (2+1,2,0-1)-- (2+1,2+1,0-1);
\draw[] (2,2,0-1)--(2,2+1,0-1);
\draw[] (2+1,2,0)--(2+1,2+1,0);

\draw[red] (1,0,-1)--(2,1,0);
\draw[red] (1,2,-1)--(2,3,0);

\draw[thick, blue] (0+1,0,0-1)-- (0+1,0+1,0-1);
\draw[dashed, thick, blue] (0+1,1,0-1)-- (0+1,1+1,0-1);

\draw[dashed, thick, blue](2,1,0)--(2,1+1,0);
\draw[thick, blue](2,2,0)--(2,2+1,0);

\node [draw, circle, fill=black, inner sep=.5pt, label=right:{\tiny{{$x = (i,1,k)$}}}] at (1,0,-1) {};
\node [draw, circle, fill=black, inner sep=.5pt, label=below right:{\tiny{{$(i+1,0,k+1)$}}}] at (2,1,0) {};

\node [draw, circle, fill=black, inner sep=.5pt, label=above left:{\tiny{{$(i,1,h)$}}}] at (1,2,-1) {};
\node [draw, circle, fill=black, inner sep=.5pt, label=left:{\tiny{{$y= (i+1,0,h+1)$}}}] at (2,3,0) {};
		\end{tikzpicture}
		\caption{Case (1): $i=j$ and $k<h$;} \label{fig:casevertical}
	\end{subfigure}
	\quad
	\begin{subfigure}[t]{8cm}
		\centering
		\begin{tikzpicture}[scale=1.5]

\shadecube{0}{0}{0};
\shadecube{1}{0}{0};
\shadecube{1}{1}{0};
\shadecube{1}{2}{0};
\shadecube{2}{2}{0};
        
\draw[](0,0,0)--(0+1,0,0)--(0+1,0,0-1)--(0,0,0-1)--(0,0,0);
\draw[](0,0+1,0)--(0+1,0+1,0) -- (0+1,0+1,0-1) (0,0+1,0-1)--(0,0+1,0);
\draw[] (0,0,0-1)--(0,0+1,0-1);
\draw[](0,0,0)--(0,0+1,0);
\draw[] (0+1,0,0)--(0+1,0+1,0);
\draw[] (0+1,0,0-1)-- (0+1,0+1,0-1);

\draw[dotted, thick](1,0,0)--(1+1,0,0)--(1+1,0,0-1)--(1,0,0-1);
\draw[dotted, thick](1,0+1,0)--(1+1,0+1,0) -- (1+1,0+1,0-1);
\draw[dotted, thick] (1+1,0,0)--(1+1,0+1,0);
\draw[dotted, thick] (1+1,0,0-1)-- (1+1,0+1,0-1);

\draw[dotted, thick](1,1,0)--(1,1+1,0);
\draw[dotted, thick] (1+1,1,0)--(1+1,1+1,0);
\draw[dotted, thick] (1+1,1,0-1)-- (1+1,1+1,0-1);
\draw[dotted, thick] (1,1,0-1)--(1,1+1,0-1);

\draw[dotted, thick](1+1,2,0-1)--(1,2,0-1)--(1,2,0);
\draw[dotted, thick](1+1,2+1,0-1)--(1,2+1,0-1)--(1,2+1,0)--(1+1,2+1,0) ;
\draw[dotted, thick](1,2,0)--(1,2+1,0);
\draw[dotted, thick] (1,2,0-1)--(1,2+1,0-1);

\draw[](2+1,2,0)--(2+1,2,0-1)--(2,2,0-1)--(2,2,0);
\draw[](2+1,2+1,0) -- (2+1,2+1,0-1)--(2,2+1,0-1)--(2,2+1,0);
\draw[](2,2,0)--(2,2+1,0);
\draw[](2,2+1,0)--(2+1,2+1,0);
\draw[] (2+1,2,0-1)-- (2+1,2+1,0-1);
\draw[] (2,2,0-1)--(2,2+1,0-1);
\draw[] (2+1,2,0)--(2+1,2+1,0);

\draw[red] (0,1,-1)--(1,2,0);
\draw[red] (2,1,-1)--(3,2,0);

\draw[thick, blue](0+1,0+1,0-1)--(0,0+1,0-1);
\draw[dashed, thick, blue] (1+1,0+1,0-1)--(1,0+1,0-1);

\draw[dashed, thick, blue] (1,2,0)--(1+1,2,0);
\draw[thick, blue] (2,2,0)--(2+1,2,0);

\node [draw, circle, fill=black, inner sep=.5pt, label=left:{\tiny{{$x= (i,1,k)$}}}] at (0,1,-1) {};
\node [draw, circle, fill=black, inner sep=.5pt, label=left:{\tiny{{$(i+1,0,k+1)$}}}] at (1,2,0) {};

\node [draw, circle, fill=black, inner sep=.5pt, label=right:{\tiny{{$(j,1,k)$}}}] at (2,1,-1) {};
\node [draw, circle, fill=black, inner sep=.5pt, label=right:{\tiny{{$y=(j+1,0,k+1)$}}}] at (3,2,0) {};
     \end{tikzpicture}
		\caption{$i < j$ and $k = h$;} \label{fig:casehorizontal}
	\end{subfigure}
		\caption{}
	\label{fig:elementsinQ}
\end{figure}

Set
\[
x=(i,1,k),\qquad y=(i+1,0,h+1).
\]
Since $C(i,0,k)\notin\mathcal P$, the vertices $x$ and $y$ do not
belong to $L_D$ for any cuboid $D\subseteq\mathcal P$. Hence
$x<_{Q_{\mathcal P}}y$. We claim that the open interval $(x,y)$ of $Q_{\mathcal P}$ is the disjoint union of the two chains
\[
(i,1,k+1)<_{Q_{\mathcal P}}\cdots<_{Q_{\mathcal P}}(i,1,h)
\]
and
\[
(i+1,0,k+1)<_{Q_{\mathcal P}}\cdots<_{Q_{\mathcal P}}(i+1,0,h)\,.
\]
Indeed, let $w\in(x,y)$ have second coordinate $1$, and choose a saturated chain from $w$ to $y$. Since $w$ has second coordinate $1$ whereas $y$ has second coordinate $0$, some cover in this chain has lower element with second coordinate $1$ and upper element with second coordinate $0$. Such a cover cannot be a relation in the coordinatewise order on $\mathbb N^3$, and hence belongs to $\mathcal S_{\mathcal P}$. By Lemma~\ref{lem:SP-diagonal}, such a
cover has the form
\[
(r,1,s)\lessdot_{Q_{\mathcal P}}(r+1,0,s+1)
\]
for some $(r,s)\in T_{\mathcal P}$. Since this cover lies in
$[x,y]$, we must have $r=i$ and $k\leq s\leq h$. By the minimality
of the chosen pair, $s=k$ or $s=h$. The case $s=k$ is impossible, because $(i,1,s)\in (x,y)$. Hence $s=h$, and
it follows that $w=(i,1,s)$ for some $k<s\leq h.$ Similarly, if $w$ has second coordinate $0$, a saturated chain
from $x$ to $w$ must contain the cover relation corresponding to $(i,k)\in T_{\mathcal P}$. Thus
$w=(i+1,0,s)$ for some $k+1\leq s\leq h$.

Finally, there is no relation between the two displayed chains.
Indeed, such a relation would give a saturated chain from an
element of the first chain to an element of the second and hence
there is a cover relation from $\MS_\MP$ in $(x,y)$ different from the two chosen
ones.
Thus, $\Delta((x,y))$ is disconnected. Since $h\geq k+2$, it has
dimension at least one, and therefore it is not Cohen--Macaulay.
This contradicts the Cohen--Macaulayness of $Q_{\mathcal P}$ by \cref{lem:intervalCM}.

\noindent\textit{Case (2).}
Suppose that $i<j$ and $k>h$. Write
\[
a=(i,1,k),\quad b=(i+1,0,k+1),\quad
c=(j,1,h),\quad d=(j+1,0,h+1),
\]
and set
\[
x=a\wedge c=(i,1,h),\qquad
y=b\vee d=(j+1,0,k+1),
\]
see Figure~\ref{fig:crossing-case}. Since $V(\mathcal P)$ is a sublattice, $x,y\in V(\mathcal P)$. Moreover, $x<_{Q_{\mathcal P}}y$. Indeed, if
$x,y\in L_D$ for some cuboid $D\subseteq\mathcal P$, then $D=[(i,0,h),(j+1,1,k+1)],$ and hence $C(i,0,k)\in\mathcal P$, contradicting $(i,k)\in T_{\mathcal P}$.

\begin{figure}[h]
\centering
\begin{tikzpicture}[scale=2]

\shadecube{0}{0}{0};
\shadecube{1}{0}{0};
\shadecube{0}{1}{0};
\shadecube{1}{1}{0};
\shadecube{2}{1}{0};
\shadecube{1}{2}{0};
\shadecube{2}{2}{0};

\draw[](0,0,0)--(1,0,0)--(1,0,-1);
\draw[](0,0,-1)--(0,0,0);
\draw[](0,1,0)--(1,1,0)--(1,1,-1)--(0,1,-1)--(0,1,0);
\draw[](0,0,0)--(0,1,0);
\draw[](1,0,0)--(1,1,0);
\draw[](1,0,-1)--(1,1,-1);

\draw[dotted, thick](1,0,0)--(2,0,0)--(2,0,-1);
\draw[dotted, thick](1,1,0)--(2,1,0)--(2,1,-1)--(1,1,-1);
\draw[dotted, thick](2,0,0)--(2,1,0);
\draw[dotted, thick](2,0,-1)--(2,1,-1);

\draw[dotted, thick](0,2,0)--(1,2,0)--(1,2,-1)--(0,2,-1)--(0,2,0);
\draw[dotted, thick](0,1,0)--(0,2,0);
\draw[dotted, thick](1,1,0)--(1,2,0);
\draw[dotted, thick](1,1,-1)--(1,2,-1);

\draw[dotted, thick](1,2,0)--(2,2,0);
\draw[dotted, thick](2,2,-1)--(1,2,-1);
\draw[dotted, thick](2,3,-1)--(1,3,-1)--(1,3,0);
\draw[dotted, thick](1,2,0)--(1,3,0);
\draw[dotted, thick](1,2,-1)--(1,3,-1);

\draw[dotted, thick](2,1,0)--(3,1,0)--(3,1,-1)--(2,1,-1);
\draw[dotted, thick](2,1,0)--(2,2,0);
\draw[dotted, thick](3,1,-1)--(3,2,-1);
\draw[dotted, thick](2,1,-1)--(2,2,-1);

\draw[](2,2,0)--(3,2,0)--(3,2,-1)--(2,2,-1)--(2,2,0);
\draw[](3,3,0)--(3,3,-1)--(2,3,-1)--(2,3,0);
\draw[](2,2,0)--(2,3,0);
\draw[](3,2,-1)--(3,3,-1);
\draw[](2,2,-1)--(2,3,-1);

\draw[red] (0,2,-1)--(1,3,0);
\draw[red] (2,0,-1)--(3,1,0);

\draw[blue, thick] (0,0,-1)--(0,1,-1);
\draw[dashed, thick, blue] (0,1,-1)--(0,2,-1);

\draw[dashed, thick, blue] (1,3,0)--(2,3,0);
\draw[blue, thick] (2,3,0)--(3,3,0);

\draw[blue, thick] (0,0,-1)--(1,0,-1);
\draw[dashed, thick, blue] (1,0,-1)--(2,0,-1);

\draw[dashed, thick, blue] (3,1,0)--(3,2,0);
\draw[blue, thick] (3,2,0)--(3,3,0);

\node [draw, circle, fill=black, inner sep=.5pt,
label=left:{\tiny{$x=a\wedge c=(i,1,h)$}}] at (0,0,-1) {};

\node [draw, circle, fill=black, inner sep=.5pt,
label=left:{\tiny{$(i,1,h+1)$}}] at (0,1,-1) {};

\node [draw, circle, fill=black, inner sep=.5pt,
label=left:{\tiny{$a=(i,1,k)$}}] at (0,2,-1) {};

\node [draw, circle, fill=black, inner sep=.5pt,
label=left:{\tiny{$b=(i+1,0,k+1)$}}] at (1,3,0) {};

\node [draw, circle, fill=black, inner sep=.5pt,
label=below right:{\tiny{$(i+1,1,h)$}}] at (1,0,-1) {};

\node [draw, circle, fill=black, inner sep=.5pt,
label=right:{\tiny{$c=(j,1,h)$}}] at (2,0,-1) {};

\node [draw, circle, fill=black, inner sep=.5pt,
label=right:{\tiny{$d=(j+1,0,h+1)$}}] at (3,1,0) {};

\node [draw, circle, fill=black, inner sep=.5pt,
label=right:{\tiny{$(j+1,0,k)$}}] at (3,2,0) {};

\node [draw, circle, fill=black, inner sep=.5pt,
label=right:{\tiny{$y=b\vee d=(j+1,0,k+1)$}}] at (3,3,0) {};

\end{tikzpicture}
\caption{Case~(2): the crossing case $i<j$ and $k>h$.}
\label{fig:crossing-case}
\end{figure}

We claim that the open interval $(x,y)$ of $Q_{\mathcal P}$ is
the disjoint union of the two chains
\[
(i,1,h+1)<\cdots<a\lessdot b<\cdots<(j,0,k+1)
\]
and
\[
(i+1,1,h)<\cdots<c\lessdot d<\cdots<(j+1,0,k).
\]
The displayed vertices belong to $V(\mathcal P)$ by convexity. Let $w\in(x,y)$ have second coordinate $1$, and choose a saturated
chain from $w$ to $y$. As in Case~(1), this chain contains a cover
\[
(r,1,s)\lessdot_{Q_{\mathcal P}}(r+1,0,s+1)
\]
with $(r,s)\in T_{\mathcal P}$. Since this cover lies in $[x,y]$,
we have $i\le r\le j,$ and $h\le s\le k.$ By the minimality of the chosen pair,
$(r,s)$ is either $(i,k)$ or $(j,h)$. In the first case $w\le a$,
and hence $w=(i,1,s)$ for some $h<s\le k.$ In the second case $w\le c$, and hence $w=(r,1,h)$ for some $i<r\le j.$
Similarly, if $w$ has second coordinate $0$, a saturated chain
from $x$ to $w$ contains one of the same two cover relations in $\MS_\MP$.
If it contains $a\lessdot b$, then
\[
w=(r,0,k+1)\qquad\text{for some }i+1\le r\le j,
\]
whereas if it contains $c\lessdot d$, then
\[
w=(j+1,0,s)\qquad\text{for some }h+1\le s\le k.
\]
Thus every element of $(x,y)$ belongs to one of the two displayed
chains.

There is no relation between the two chains. Elements with the same second coordinate have incomparable $xz-$projections. Any relation from an element with second coordinate $1$ in one chain to an element with second coordinate $0$ in the other would yield, along a saturated chain, an additional element of
$\mathcal S_{\mathcal P}$ in $[x,y]$, contradicting the minimality of the chosen pair. Hence $\Delta((x,y))$ is disconnected. Since each of the two chains contains a cover relation of $\MS_\MP$, $\dim\Delta((x,y))\ge1$. Therefore $\Delta((x,y))$ is not Cohen--Macaulay, a contradiction.
\end{proof}


\subsection{Vertex decomposability}

We now prove the converse of Theorem~\ref{thm:LP-chain}, in the
stronger form that the chain condition on ${\mathcal L}_{\mathcal P}$
implies vertex decomposability.

We begin by recalling an auxiliary fact that will be used repeatedly.

\begin{Lemma}\label{lem:join}
Let $P$ and $Q$ be disjoint finite posets. Then the order complex of the ordinal sum
$P\dirsum Q$ is vertex decomposable if and only if both
$\Delta(P)$ and $\Delta(Q)$ are vertex decomposable.
\end{Lemma}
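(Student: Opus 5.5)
The plan is to use that the order complex of an ordinal sum is a join: $\Delta(P\oplus Q)=\Delta(P)*\Delta(Q)$, since a chain of $P\oplus Q$ is exactly the disjoint union of a chain of $P$ and a chain of $Q$. So the statement reduces to the fact that a join of two complexes on disjoint vertex sets is vertex decomposable if and only if both factors are. I would prove this by induction on the number of vertices of $\Delta_1*\Delta_2$, using the identities
\[
\link_{\Delta_1*\Delta_2}(x)=\link_{\Delta_1}(x)*\Delta_2,\qquad \del_{\Delta_1*\Delta_2}(x)=\del_{\Delta_1}(x)*\Delta_2
\]
for $x\in V(\Delta_1)$, and purity, which holds for the join exactly when it holds for both factors (facets of the join are unions of facets).

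For ``if'': if $\Delta_1$ is a simplex or $\{\emptyset\}$, induct instead on $\Delta_2$; if both are simplices, so is the join. Otherwise take a shedding vertex $x$ of $\Delta_1$ with link and deletion vertex decomposable. A facet of $\del_{\Delta_1}(x)*\Delta_2$ is $F\cup G$ with $F$ a facet of $\del_{\Delta_1}(x)$, hence of $\Delta_1$, and $G$ a facet of $\Delta_2$. So $x$ is shedding in the join, and induction applies to the link and deletion joins.

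For ``only if'': if the join is a simplex, both factors are simplices. Otherwise take a shedding vertex $x$ of the join, say $x\in V(\Delta_1)$, whose link and deletion are vertex decomposable. By induction, $\link_{\Delta_1}(x)$, $\del_{\Delta_1}(x)$ and $\Delta_2$ are vertex decomposable. Shedding transfers back: if $F$ is a facet of $\del_{\Delta_1}(x)$, then $F\cup G$ is a facet of the deletion of the join, hence of the join, so $F$ is a facet of $\Delta_1$. Thus $\Delta_1$ is vertex decomposable.

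The main obstacle is the degenerate cases where a deletion becomes a cone or has lost vertices. One has to check that $\del_{\Delta_1}(x)$ need not contain $x$ as a vertex while still living on a disjoint vertex set, and that $\{\emptyset\}$ behaves as the identity for joins. With care in the base cases the induction goes through.
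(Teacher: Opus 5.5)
Your proposal is correct and follows the paper's route: identify $\Delta(P\oplus Q)$ with the join $\Delta(P)*\Delta(Q)$, then use that a join is vertex decomposable exactly when both factors are. The paper simply cites this fact about joins, while you also sketch its standard proof by induction on the number of vertices, using the link and deletion formulas for joins and the correspondence between facets of a join and pairs of facets; that sketch is sound.
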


\begin{proof}
Since
\[
\Delta(P\dirsum Q)=\Delta(P)*\Delta(Q),
\]
where $*$ denotes the join of simplicial complexes, the result follows from
the fact that the join of two simplicial complexes is vertex decomposable if
and only if each factor is vertex decomposable.
\end{proof}


\begin{figure}[h]
    \begin{tikzpicture}[scale=2]
\unitcubebig{0}{0}{0};
\draw[](1,0+1,0)--(1+.75,0+1,0) -- (1+.75,0+1,0-1)--(1,0+1,0-1);
\draw[](3.25,0+1,0)--(1+1.5,0+1,0) -- (1+1.5,0+1,0-1)--(3.25,0+1,0-1) --(3.25,0+1,0);
\draw[dotted, thick](1+1.5,0+1,0)--(1+.75,0+1,0) ;
\draw[dotted, thick] (1+.75,0+1,0-1)--(1+1.5,0+1,0-1);

\node [draw, circle, fill=black, inner sep=.5pt, label=left:{\tiny{{$(0,0,0)$}}}] at (0,0,0) {}; 
\node [draw, circle, fill=black, inner sep=.5pt, label=below:{\tiny{{$(l_1-1,0,0)$}}}] at (1,0,0) {};

 \node [draw, circle, fill=black, inner sep=.5pt, label=left:{\tiny{{$(0,1,0)$}}}] at (0,0,-1) {}; 
 \node [draw, circle, fill=black, inner sep=.5pt, label=left:{\tiny{{$(0,0,t)$}}}] at (0,1,0) {}; 
 \node [draw, circle, fill=black, inner sep=.5pt, label=left:{\tiny{{$(0,1,t)$}}}] at (0,1,-1) {}; 

 \node [draw, circle, fill=black, inner sep=.5pt, label=right:{\tiny{{$(l_1-1,1,0)$}}}] at (1,0,-1) {}; 
 \node [draw, circle, fill=black, inner sep=.5pt, label=above left:{\tiny{{$(l_1-1,1,t)$}}}] at (1,1,-1) {};

 \node [draw, circle, fill=black, inner sep=.5pt, label=above:{\tiny{{$(l_1,1,t)$}}}] at (1.75,1,-1) {}; 
 \node [draw, circle, fill=black, inner sep=.5pt, label=below:{\tiny{{$(r-1,0,t)$}}}] at (2.5,1,0) {}; 
  \node [draw, circle, fill=black, inner sep=.5pt, label=right:{\tiny{{$(r,1,t)$}}}] at (3.25,1,-1) {}; 
 \node [draw, circle, fill=black, inner sep=.5pt, label=right:{\tiny{{$(r,0,t)$}}}] at (3.25,1,0) {}; 

\end{tikzpicture}
	\caption{}
	\label{fig:posetexample}
\end{figure}

\begin{lemma} \label{lem:auxiliary}
 Let $r\geq l_1 \geq 1$ and $t\ge0$. Let $R$ denote the induced subposet of
$\NN^3$ (see \Cref{fig:posetexample}) on the vertex set
\[
[(0,0,0),(l_1-1,1,t)]
\;\cup\;
\{(i,j,t):l_1 \leq i\le r,\ j=0,1\}.
\]
Then $R$ is a distributive lattice. Consequently,
$\Delta(R)$ is vertex decomposable.  
Moreover, $\Delta([(0,0,0), v)_R)$ is vertex decomposable for all $v\in R$, $v\neq (0,0,0)$ .
\end{lemma}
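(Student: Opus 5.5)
The first step is to check that $R$ is closed under coordinatewise $\min$ and $\max$ in $\NN^3$. Write $B=[(0,0,0),(l_1-1,1,t)]\cap\NN^3$ for the box part and $T=\{(i,j,t):l_1\le i\le r,\ j=0,1\}$ for the tail. Then $R=B\cup T$, and there are three cases.
\begin{itemize}
\item If both elements lie in $B$, closure holds because $B$ is an interval of $\NN^3$.
\item If both elements lie in $T$, closure holds because all third coordinates equal $t$ and the first two coordinates stay within the ranges defining $T$.
\item If $\beta=(\beta_1,\beta_2,\beta_3)\in B$ and $\tau=(i,j,t)\in T$, then $\beta\wedge\tau=(\beta_1,\min\{\beta_2,j\},\beta_3)\in B$, since $\beta_1\le l_1-1<i$ and $\beta_3\le t$. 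Likewise $\beta\vee\tau=(i,\max\{\beta_2,j\},t)\in T$.
\end{itemize}
Hence $R$ is a sublattice of $\NN^3$. Since $\NN^3$ is distributive, $R$ is a finite distributive lattice.

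For vertex decomposability I will invoke the general fact that the order complex of any finite distributive lattice is vertex decomposable. Because I am not sure this is standard enough to cite without comment, I would also give a direct induction on $r$. Let $w=(r,0,t)$ and $\hat 1=(r,1,t)$. In $R$ the element $w$ is covered only by $\hat1$.

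First, $w$ is a shedding vertex. Let $\sigma$ be a maximal chain of $R\setminus\{w\}$. It must pass through $(r-1,1,t)\lessdot\hat 1$: in $R\setminus\{w\}$, every element other than $\hat1$ lies below $(r-1,1,t)$, because it lies either in $B$ or in the smaller tail. Since $(r-1,1,t)\not\le w$, the vertex $w$ cannot be inserted into $\sigma$. So every facet of $\del(w)$ is a facet of $\Delta(R)$.

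Next, the deletion is a join: $\del_{\Delta(R)}(w)=\Delta(R_{r-1})*\{\hat1\}$. Here $R_{r-1}$ is the same poset with $r$ replaced by $r-1$; when $r=l_1$ it is simply the box $B$, a product of chains. Similarly, the link is $\link_{\Delta(R)}(w)=\Delta([\hat 0,w)_R)*\{\hat 1\}$. The interval $[\hat 0,w]_R$ consists of the elements of $R$ with second coordinate $0$, and it is a planar lattice of exactly the same shape: a rectangle with a chain attached along its top edge. Both pieces are covered by \cref{lem:join} together with induction on the size of the lattice. The base cases are products of chains, where I would either quote vertex decomposability of $\Delta$ of a product of chains or run the same shedding argument coordinate by coordinate.

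For the final assertion, let $v\in R$ with $v\neq(0,0,0)$. Then $[\hat 0,v]_R$ is again a distributive lattice, being an interval of one, so $\Delta([\hat0,v]_R)$ is vertex decomposable by the first part. Moreover $\Delta([\hat 0,v)_R)=\link_{\Delta([\hat0,v]_R)}(v)$, because $v$ is the top element. Provan and Billera showed that links of faces in a vertex decomposable complex are vertex decomposable, and this gives the claim.

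The main obstacle is the middle step: proving vertex decomposability of $\Delta$ for a general finite distributive lattice, or at least for the lattices arising in the induction. Everything else is a short lattice check. I would try first to isolate a join-irreducible element covered only by $\hat1$ as the shedding vertex, as described above, and keep the induction within the class of planar or "box plus tail" lattices.
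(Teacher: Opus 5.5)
Your proposal is correct and follows the paper's route: check that $R$ is closed under coordinatewise $\min$ and $\max$, cite Provan--Billera for vertex decomposability of order complexes of finite distributive lattices, and get the final assertion from the fact that links of faces in vertex decomposable complexes are vertex decomposable. Writing $\Delta([(0,0,0),v)_R)$ as the link of $v$ in $\Delta([(0,0,0),v]_R)$ is a harmless variant of the paper's link argument in $\Delta(R)$, and the backup induction on $r$ is unnecessary; it also does not remove the reliance on the general fact, since it still bottoms out in products of chains.
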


\begin{proof}
    The vertex set of $R$ is closed under coordinatewise minimum and maximum. Thus $R$ is a finite sublattice of
$\mathbb N^3$, hence a distributive lattice. Consequently, $\Delta(R)$ is vertex decomposable by \cite{PB80vertexdecomposable}. The moreover part follows as links of faces of vertex decomposable complexes are vertex decomposable.
\end{proof}

	\begin{figure}[h]
		\begin{subfigure}[t]{11cm}
			\centering	
			    \begin{tikzpicture}[scale=2]
\unitcubebig{1}{0}{0}; 
\draw[](2,0,0)--(2+1,0,0) (2+1,0,0-1)--(2,0,0-1);
\draw[](2,0+1,0)--(2+1,0+1,0) (2+1,0+1,0-1)--(2,0+1,0-1);

\shadecube{2}{0}{0};
\shadecube{3}{1}{0};

\unitcubebig{3}{0}{0};

\draw[](3+1,1+1,0) -- (3+1,1+1,0-1) (3,1+1,0-1)--(3,1+1,0);
\draw[](3,1,0)--(3,1+1,0);
\draw[] (3,1,0-1)--(3,1+1,0-1);

\draw[] (3+1,1,0-1)-- (3+1,1+1,0-1);
\draw[] (3+1,1,0)-- (3+1,1+1,0);

\draw[](3,1+1,0)--(3+.3,1+1,0) (3+1,1+1,0)--(3+.66,1+1,0);
\draw[](3,1+1,0-1)--(3+.33,1+1,0-1) (3+1,1+1,0-1)--(3+.66,1+1,0-1);
\draw[dotted, thick](3+.33,1+1,0)--(3+.66,1+1,0);
\draw[dotted, thick](3+.33,1+1,0-1)--(3+.66,1+1,0-1);

\draw[red] (2,1,-1)--(3,2,0);

\node [draw, circle, fill=black, inner sep=.5pt, label=left:{\tiny{{$(0,0,0)$}}}] at (1,0,0) {}; 
\node [draw, circle, fill=black, inner sep=.5pt, label=below:{\tiny{{$(l_1-1,0,0)$}}}] at (2,0,0) {}; 
\node [draw, circle, fill=black, inner sep=.5pt, label=below:{\tiny{{$(l_1,0,0)$}}}] at (3,0,0) {}; 
\node [draw, circle, fill=black, inner sep=.5pt, label=below:{\tiny{{$(l_2,0,0)$}}}] at (4,0,0) {}; 

\node [draw, circle, fill=black, inner sep=.5pt, label=above left:{\tiny{{$(l_1-1,1,n_1)$}}}] at (2,1,-1) {}; 

\node [draw, circle, fill=black, inner sep=.5pt, label=above right:{\tiny{{$(l_1,1,0)$}}}] at (3,0,-1) {}; 
\node [draw, circle, fill=black, inner sep=.5pt, label=right:{\tiny{{$(l_2,1,0)$}}}] at (4,0,-1) {}; 
\node [draw, circle, fill=black, inner sep=.5pt, label=right:{\tiny{{$(l_2,1,n_1)$}}}] at (4,1,-1) {}; 

\node [draw, circle, fill=black, inner sep=.5pt, label=above left:{\tiny{{$(l_1,0,n_1+1)$}}}] at (3,2,0) {}; 
\node [draw, circle, fill=black, inner sep=.5pt, label=right:{\tiny{{$(l_2,0,n_1+1)$}}}] at (4,2,0) {}; 
\node [draw, circle, fill=black, inner sep=.5pt, label=above:{\tiny{{$(l_1,1,n_1+1)$}}}] at (3,2,-1) {}; 
\node [draw, circle, fill=black, inner sep=.5pt, label=right:{\tiny{{$(l_2,1,n_1+1)$}}}] at (4,2,-1) {}; 

\node [draw, circle, fill=black, inner sep=.5pt, label=left:{\tiny{{$(0,1,0)$}}}] at (1,0,-1) {}; 
\node [draw, circle, fill=black, inner sep=.5pt, label=left:{\tiny{{$(0,0,n_1)$}}}] at (1,1,0) {}; 
\node [draw, circle, fill=black, inner sep=.5pt, label=left:{\tiny{{$(0,1,n_1)$}}}] at (1,1,-1) {}; 
\end{tikzpicture}
			\caption{}\label{fig:vertexdecomhorizontal}
		\end{subfigure}
		\quad
		\begin{subfigure}[t]{10cm}
			\centering	
\begin{tikzpicture}[scale=2]
\unitcubebig{0}{1}{0}; 
\draw[](0,2,0)--(0,2+1,0);
\draw[] (0,2,0-1)--(0,2+1,0-1);

\draw[] (0+1,2,0-1)-- (0+1,2+1,0-1);
\draw[] (0+1,2,0)-- (0+1,2+1,0);
\unitcubebig{0}{3}{0};

\shadecube{0}{2}{0};
\shadecube{1}{3}{0};

\draw[](1,3,0)--(1+1,3,0)--(1+1,3,0-1)--(1,3,0-1);
\draw[](1,3+1,0)--(1+1,3+1,0) -- (1+1,3+1,0-1)--(1,3+1,0-1);

\draw[] (1+1,3,0)--(1+1,3+.33,0) (1+1,3.66,0)--(1+1,3+1,0);
\draw[] (1+1,3,0-1)--(1+1,3+.33,0-1) (1+1,3.66,0-1)--(1+1,3+1,0-1);

\draw[dotted, thick] (1+1,3+.33,0)-- (1+1,3.66,0);
\draw[dotted, thick] (1+1,3+.33,0-1)--(1+1,3.66,0-1);

\draw[red] (1,2,-1)--(2,3,0);

\node [draw, circle, fill=black, inner sep=.5pt, label=below:{\tiny{{$(0,0,0)$}}}] at (0,1,0) {}; 
\node [draw, circle, fill=black, inner sep=.5pt, label=below:{\tiny{{$(l_1,0,0)$}}}] at (1,1,0) {}; 
\node [draw, circle, fill=black, inner sep=.5pt, label=right:{\tiny{{$(l_1,1,0)$}}}] at (1,1,-1) {}; 
\node [draw, circle, fill=black, inner sep=.5pt, label=left:{\tiny{{$(0,1,0)$}}}] at (0,1,-1) {}; 

\node [draw, circle, fill=black, inner sep=.5pt, label=left:{\tiny{{$(0,0,n_1)$}}}] at (0,2,0) {}; 
\node [draw, circle, fill=black, inner sep=.5pt, label=left:{\tiny{{$(0,1,n_1)$}}}] at (0,2,-1) {}; 

\node [draw, circle, fill=black, inner sep=.5pt, label=left:{\tiny{{$(0,0,n_1+1)$}}}] at (0,3,0) {}; 
\node [draw, circle, fill=black, inner sep=.5pt, label=left:{\tiny{{$(0,1,n_1+1)$}}}] at (0,3,-1) {}; 
\node [draw, circle, fill=black, inner sep=.5pt, label=left:{\tiny{{$(0,0,n_2)$}}}] at (0,4,0) {}; 
\node [draw, circle, fill=black, inner sep=.5pt, label=left:{\tiny{{$(0,1,n_2)$}}}] at (0,4,-1) {}; 

\node [draw, circle, fill=black, inner sep=.5pt, label= below left:{\tiny{{$(l_1,0,n_2)$}}}] at (1,4,0) {}; 
\node [draw, circle, fill=black, inner sep=.5pt, label=above :{\tiny{{$(l_1,1,n_2)$}}}] at (1,4,-1) {}; 
\node [draw, circle, fill=black, inner sep=.5pt, label= right:{\tiny{{$(l_1+1,0,n_2)$}}}] at (2,4,0) {}; 
\node [draw, circle, fill=black, inner sep=.5pt, label=right :{\tiny{{$(l_1+1,1,n_2)$}}}] at (2,4,-1) {};

\node [draw, circle, fill=black, inner sep=.5pt, label=right:{\tiny{{$(l_1,1,n_1)$}}}] at (1,2,-1) {}; 
\node [draw, circle, fill=black, inner sep=.5pt, label=right:{\tiny{{$(l_1+1,0,n_1+1)$}}}] at (2,3,0) {}; 
\node [draw, circle, fill=black, inner sep=.5pt, label=right:{\tiny{{$(l_1+1,1,n_1+1)$}}}] at (2,3,-1) {};
\end{tikzpicture}
		\caption{}\label{fig:vertexdecomvertical}
		\end{subfigure}
	\end{figure}

We will also need the following gradedness property, which ensures that
the relevant order complexes are pure.

\begin{Lemma}\label{lem:graded}
Let $\MP\subseteq [(0,0,0),(l,1,n)]$ satisfy \Cref{setup:polycube}, and assume that every element of $\MS_\MP$ has the form 
\{(i,1,k), (i+1,0,k+1)\}. Then $Q_\MP$ is graded with
rank function
\[
\rho(i,j,k)=i+j+k .
\]
In particular, $\Delta(Q_\MP)$ and $\Delta(I)$ for every interval $I$ of
$Q_\MP$ are pure.
\end{Lemma}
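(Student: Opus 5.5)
The plan is to show that every cover relation $a\lessdot_{Q_\MP} b$ satisfies $\rho(b)=\rho(a)+1$. Once that is established, every saturated chain between two elements $x<_{Q_\MP}y$ has length $\rho(y)-\rho(x)$. Hence every interval is pure. Since $Q_\MP$ is bounded, $\Delta(Q_\MP)$ is also pure: its maximal chains are exactly the saturated chains from the minimum to the maximum.

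First I would check that $\rho$ is strictly increasing along $<_{Q_\MP}$. If $a<b$ in $\NN^3$, this is clear. If instead $a<_{Q_\MP}b$ comes from condition~(2) of Definition~\ref{def:QP}, then $\pi(a)\leq\pi(b)$ and $a,b$ are incomparable in $\NN^3$. This forces $\pi(a)<\pi(b)$, $a_2=1$ and $b_2=0$. The pair cannot lie in a common $L_D$, and since some corner lattice contains any two vertices with the same projection, the projections differ by at least one unit. For a general such pair I would argue $\rho(b)-\rho(a)\geq 1$. Indeed, $a_1+a_3\le b_1+b_3-1$ would give only $\rho(b)\ge\rho(a)$. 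However, equality would need $\pi(b)=\pi(a)+e$ for a unit vector $e$, and then $a,b$ lie in the corner lattice of the unit cube spanned by $\pi(a),\pi(b)$ and $y\in\{0,1\}$. That cube belongs to $\MP$ by Setup~\ref{setup:polycube} and convexity, contradicting condition~(2). This gives strict monotonicity.

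Next I would split the cover relations $a\lessdot_{Q_\MP} b$ into two types. If $a,b$ are incomparable in $\NN^3$, then $\{a,b\}\in\MS_\MP$, so by hypothesis $b-a=(1,-1,1)$ and $\rho(b)=\rho(a)+1$. If $a<b$ in $\NN^3$, I must show $b=a+e_s$ for a standard basis vector $e_s$. Suppose $\rho(b)-\rho(a)\ge2$. Because $V(\MP)$ is a sublattice of $\NN^3$ and $\MP$ is convex, the interval $[a,b]\cap\NN^3$ is contained in $V(\MP)$. To see this, note that $a\vee$ and $\wedge$ operations combined with the convexity of axis-parallel segments give all lattice points of the box. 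Any lattice point $c$ strictly between $a$ and $b$ then satisfies $a<c<b$ in $\NN^3$, hence $a<_{Q_\MP}c<_{Q_\MP}b$, contradicting the cover.

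The main obstacle is justifying that every lattice point of the box $[a,b]$ lies in $V(\MP)$. I would first try to use Remark~\ref{rem:structure}(1) to reduce to the $xz$-rectangle $[\pi(a),\pi(b)]$. In that rectangle the corners $(a_1,b_3)$ and $(b_1,a_3)$ are obtained from a saturated $\NN^3$-chain inside $V(\MP)$, as in the proof of Lemma~\ref{lem:SP-diagonal}. Connectedness produces an intermediate vertex, and at least one vertex of the form $a+e_s$ with $a+e_s\le b$ lies in $V(\MP)$, which already suffices. Concretely, take a saturated chain in the sublattice $V(\MP)$ from $a$ to $b$ that is saturated in $\NN^3$, exactly as in Lemma~\ref{lem:SP-diagonal}; its second element is such a $c$. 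With this, $\rho$ increases by exactly one on every cover, which proves that $Q_\MP$ is graded with rank function $\rho$, and purity of all intervals follows.
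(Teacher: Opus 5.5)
Your proposal is correct and follows the paper's proof. A cover $a\lessdot_{Q_\MP}b$ either lies in $\MS_\MP$, where the hypothesis gives $b-a=(1,-1,1)$, or satisfies $a<b$ in $\NN^3$; in the second case the $\NN^3$-saturated chain inside $V(\MP)$ from $a$ to $b$ (the same fact the paper uses in Lemma~\ref{lem:SP-diagonal}) forces $b=a+e_s$, and your strict-monotonicity paragraph is not needed.

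Discard the intermediate claim that all of $[a,b]\cap\NN^3$ lies in $V(\MP)$, because it is false. For $\MP=\{C(0,0,0),C(1,0,0),C(1,0,1)\}$, which meets all the hypotheses, the point $(0,0,2)\in[(0,0,0),(1,0,2)]$ is not a vertex. What is true, and all you need, is the existence of a single step $a+e_s\le b$ in $V(\MP)$.
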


\begin{proof}
Let $a\lessdot b$ be a cover relation of $Q_\MP$. If $a\le b$ in $\mathbb N^3$,
then $b=a+e_t$ for some $t\in\{1,2,3\}$.
Otherwise $\{a,b\}\in\MS_\MP$, and $a=(i,1,k)$ and $b=(i+1,0,k+1)$.
In both cases $\rho(b)=\rho(a)+1$. Since $Q_\MP$ is bounded, it follows that all maximal chains of $Q_\MP$ have the same length,
i.e.\ $Q_\MP$ is graded by $\rho$. Intervals of graded posets are graded.
\end{proof}

We are now ready for the main result of this subsection.

\begin{Theorem}\label{thm:chain_vd}
Let $\MP$ be a polycube as in \Cref{setup:polycube} with
$\MP\subseteq[(0,0,0),(l,1,n)]$ for some integers $l,n\ge1$.
If $\ML_\MP$ is a chain in $Q_\MP$, then $\Delta(Q_\MP)$ is vertex decomposable.
\end{Theorem}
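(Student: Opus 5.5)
We argue by induction on $\#\mathcal S_\MP$. If $\mathcal S_\MP=\emptyset$, then $Q_\MP$ coincides with the induced order of $\mathbb N^3$ on $V(\MP)$. This is a distributive lattice because $V(\MP)$ is a sublattice, so $\Delta(Q_\MP)$ is vertex decomposable by \cite{PB80vertexdecomposable}.

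Assume $\mathcal S_\MP\neq\emptyset$. Since $\ML_\MP$ is a chain, every $\{a,b\}\in\mathcal S_\MP$ has $a,b$ comparable with all other members of $\ML_\MP$. Lemma~\ref{lem:same-coordinate} shows that no element of $\mathcal S_\MP$ has $i=i'$ or $k=k'$; otherwise $\ML_\MP$ would not be a chain. Lemma~\ref{lem:SP-diagonal} then shows that every element of $\mathcal S_\MP$ has the form $\{(i,1,k),(i+1,0,k+1)\}$. Hence Lemma~\ref{lem:graded} applies: $Q_\MP$ and all its intervals are graded, so all complexes below are pure.

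Because $\ML_\MP$ is a chain, the set $T_\MP$ is strictly increasing in both coordinates. Let $(i_0,k_0)$ be its first element, with $a=(i_0,1,k_0)$ and $b=(i_0+1,0,k_0+1)$. The picture is that of Figure~\ref{fig:vertexdecomhorizontal} or~\ref{fig:vertexdecomvertical}: up to interchanging $x$ and $z$, the part of $\MP$ before the first missing corner is a poset $R$ of the shape in Lemma~\ref{lem:auxiliary}, namely a box $[(0,0,0),(l_1-1,1,n_1)]$ followed by a strip.

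\textbf{Main step.} We look for a shedding vertex that removes the cover $a\lessdot b$. The natural candidate is $a=(l_1-1,1,n_1)$, the top corner of the box, or dually $b$.

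\emph{Link of $a$.} We have $\link(a)=\Delta([\hat0,a))*\Delta((a,\hat1])$. The interval $[\hat0,a)$ is a lower interval of $R$, so it is vertex decomposable by Lemma~\ref{lem:auxiliary}. We would show that $(a,\hat1]$ is, after adjoining a minimum, the poset $Q_{\MP'}$ for a smaller polycube $\MP'$ satisfying Setup~\ref{setup:polycube}. Its $\ML_{\MP'}$ is still a chain and it has fewer $\mathcal S$-covers, so induction applies. Alternatively, $(a,\hat1]$ is an ordinal sum of such posets. Lemma~\ref{lem:join} then gives vertex decomposability of the link.

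\emph{Deletion of $a$.} After removing $a$, the remaining elements lying below $b$ and not below $a$ are comparable to $b$ only through $\mathbb N^3$-relations. So $Q_\MP\setminus\{a\}$ should again be $Q_{\MP''}$ with one fewer $\mathcal S$-cover, or a poset glued from pieces each covered by Lemma~\ref{lem:auxiliary} or by induction. We then apply induction.

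\emph{Shedding.} We must check that every maximal chain of $Q_\MP\setminus\{a\}$ is maximal in $Q_\MP$. By gradedness (Lemma~\ref{lem:graded}), it suffices that every maximal chain avoiding $a$ has full length $\rho(\hat1)$. A chain through the cover of $a$ can always be rerouted through $(l_1,1,n_1)$ or $(l_1-1,0,n_1+1)$ using convexity, so no chain loses length when $a$ is removed.

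\textbf{Main obstacle.} The hard part is identifying the deletion and the upper part of the link as posets of the same type, i.e. of the form $Q_{\MP'}$ or ordinal sums thereof, so that induction genuinely applies. This needs careful bookkeeping of which cuboids survive. If this fails, we would instead peel off the vertices of $\ML_\MP$ in chain order, one cover at a time. At each stage we would verify directly that the deletion is a join of a lower interval of a distributive lattice of the form $R$ with the remaining poset, which has one fewer $\mathcal S$-cover. The final stage is handled by Lemma~\ref{lem:auxiliary}.
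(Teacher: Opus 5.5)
\textbf{There is a gap in your main step.} The vertex $a=(l_1-1,1,n_1)$ is in general \emph{not} a shedding vertex of $\Delta(Q_\MP)$, and neither is $b=(l_1,0,n_1+1)$. So the deletion you want to feed into the induction is not even pure.

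Take the configuration of Figure~\ref{fig:vertexdecomhorizontal} with $l_1\ge 2$, and a maximal chain $L$ of $Q_\MP$ containing $(l_1-2,1,n_1)\lessdot a\lessdot (l_1,1,n_1)$.
\begin{itemize}
\item By Lemma~\ref{lem:graded}, any element strictly between the two outer vertices has rank $\rho(a)$ and $xz$-projection between $(l_1-2,n_1)$ and $(l_1,n_1)$. Hence it is $a$ or $(l_1,0,n_1)$.
\item The vertex $(l_1,0,n_1)$ and $(l_1-2,1,n_1)$ are both corners of the cuboid $[(l_1-2,0,n_1-1),(l_1,1,n_1)]\subseteq\MP$, so they are incomparable in $Q_\MP$.
\item Therefore $L\setminus\{a\}$ is a maximal chain of $Q_\MP\setminus\{a\}$ of length $\rho(\hat1)-1$. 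It is a facet of $\del(a)$ that is not a facet of $\Delta(Q_\MP)$.
\end{itemize}
Your proposed reroute does not exist. The vertex $(l_1-1,0,n_1+1)$ is not a vertex of $\MP$ here, since in Figure~\ref{fig:vertexdecomhorizontal} the layer $n_1\le z\le n_1+1$ begins at $x=l_1$. Also, $(l_1,1,n_1)$ is an upper cover of $a$, not a substitute for it.

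A concrete instance is $\MP=\{C(0,0,0),C(1,0,0),C(2,0,0),C(2,0,1)\}$. It satisfies all hypotheses, with $\MS_\MP=\{\{(1,1,1),(2,0,2)\}\}$.
\begin{itemize}
\item The maximal chain $(0,0,0)<(0,1,0)<(0,1,1)<(1,1,1)<(2,1,1)<(3,1,1)<(3,1,2)$ cannot be rerouted around $a=(1,1,1)$.
\item The maximal chain $(0,0,0)<(0,1,0)<(0,1,1)<(1,1,1)<(2,0,2)<(3,0,2)<(3,1,2)$ cannot be rerouted around $b=(2,0,2)$. The other rank-$4$ candidates are $(2,1,1)$ and $(3,0,1)$; the first shares a cuboid with $(3,0,2)$ and the second shares one with $(1,1,1)$.
\end{itemize}
Your fallback of peeling the vertices of $\ML_\MP$ in chain order begins with exactly these vertices, so it fails for the same reason.

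\textbf{The missing idea.} One should not attack the cover $a\lessdot b$ directly. Instead, shed the strip $B=[(l_1,0,0),(l_2,1,n_1)]$ lying beside the box $Q_1=[\hat0,a]$, in a specific order: level by level ($t=0,\ldots,n_1$), right to left within a level, with $(r,1,t)$ deleted before $(r,0,t)$. This order does three things.
\begin{itemize}
\item Each deleted vertex $v_i$ has the unique upper cover $v_i+e_3$ in the current poset. This is what makes the rerouting work; the very last vertex $(l_1,0,n_1)$ is rerouted through $a$.
\item Each link splits as a lower interval of the distributive lattice $R$ of Lemma~\ref{lem:auxiliary}, joined with an upper interval of $Q_2=[(l_1,0,0),\hat1]$.
\item The poset $Q_2$ equals $Q_{\MP_2}$ for a polycube $\MP_2$ with smaller $\ML$, so induction applies to it.
\end{itemize}
Once $B$ is gone, what remains is the ordinal sum $Q_1\oplus Q_3$ with $Q_3=[b,\hat1]$. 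The cover $a\lessdot b$ is then just the junction of a join of two vertex decomposable complexes.
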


\begin{proof}
We proceed by induction on $\#\ML_\MP$. If $\ML_\MP=\varnothing$, then $Q_\MP$
is the set $V(\MP)$ with the order induced from $\mathbb N^3$, hence a
distributive lattice, and $\Delta(Q_\MP)$ is vertex decomposable by
\cite{PB80vertexdecomposable}. Assume therefore that $\#\ML_\MP>0$ and that
the assertion holds for every polycube $\MP'$ satisfying \Cref{setup:polycube},
contained in $[(0,0,0),(l',1,n')]$ for some $l',n'\ge1$, with
$\#\ML_{\MP'}<\#\ML_\MP$. Translating $\MP$ if necessary, we may assume that
$(0,0,0)$ and $(l,1,n)$ are the minimal and the maximal element of $Q_\MP$;
we write $\hat 1=(l,1,n)$.

\medskip\noindent
Since $\ML_\MP\neq\varnothing$, also $\MS_\MP\neq\varnothing$, and by
\Cref{lem:same-coordinate,lem:SP-diagonal} every $\{a,b\}\in\MS_\MP$ has the form
$a=(i,1,k)$, $b=(i+1,0,k+1)$. In particular the minimal element of the chain
$\ML_\MP$ has second coordinate $1$; write it as $(l_1-1,1,n_1)$, so that
$(l_1,0,n_1+1)\in\ML_\MP$ and
\[
(l_1-1,1,n_1)\lessdot(l_1,0,n_1+1)
\]
is a cover relation of $Q_\MP$. In a neighbourhood of this cover the polycube
$\MP$ has one of the two local forms shown in
\Cref{fig:vertexdecomhorizontal,fig:vertexdecomvertical}. We treat the form of
\Cref{fig:vertexdecomhorizontal}; the other case is entirely analogous. 
Note that the vertex set of $Q_\MP$ is the disjoint union of the vertex sets of the following three
intervals:
\begin{align*}
Q_1&=[(0,0,0),(l_1-1,1,n_1)]_{Q_\MP}
     =\{v\in Q_\MP : v\leq (l_1-1,1,n_1) \},\\
B  &=[(l_1,0,0),(l_2,1,n_1)]_{Q_\MP}
     =\{v\in Q_\MP : (l_1,0,0) \leq  v \leq (l_2,1,n_1)\},\\
Q_3&=[(l_1,0,n_1+1),\hat1]_{Q_\MP}
     =\{v\in Q_\MP : v \geq (l_1,0,n_1+1)\},
\end{align*}
and we also set
\[
Q_2=[(l_1,0,0),\hat1]_{Q_\MP}
\qand
\MP_2=\{C(v)\in\MP : v\ge(l_1,0,0)\}.
\]
Note that $V(Q_2) = V(B)\sqcup V(Q_3)$, and $Q_3$ may be viewed as a closed interval in $Q_2$, i.e., $Q_3 = [(l_1,0,n_1+1),\hat{1}]_{Q_2}$ is a closed interval of $Q_2$.
Every element of $Q_1$ is below every element of $Q_3$ in $Q_\MP$, since
\[
u\le(l_1-1,1,n_1)\lessdot(l_1,0,n_1+1)\le v \qforall u \in Q_1, v \in Q_3.
\]

We have partitioned $V(Q_{\mathcal P})$ into three disjoint subsets. We first show that the order complexes of $Q_1$ and $Q_3$ are vertex decomposable and then construct an explicit shedding order on the set $V(B)$.

\medskip\noindent
\emph{Step 1: $\Delta(Q_1)$ is vertex decomposable.}
The vertices in $Q_1$ are exactly the points of
$\{0,\dots,l_1-1\}\times\{0,1\}\times\{0,\dots,n_1\}$, and any two of them are
vertices of the cuboid formed by the unit-cubes $C(i,0,k)$, $0\le i<l_1-1$,
$0\le k<n_1$ (or of a single column of such unit-cubes if $l_1=1$). Hence $Q_1$ is a distributive lattice, 
and $\Delta(Q_1)$ is vertex decomposable by \cite{PB80vertexdecomposable}.

\medskip\noindent
\emph{Step 2: $Q_2=Q_{\MP_2}$, and $\Delta(Q_2)$ is vertex decomposable.}
We first compare the underlying sets. 
Clearly, $V(\mathcal P_2)\subseteq Q_2$. Conversely, let
$a=(a_1,a_2,a_3)\in Q_2$. Choose a unit-cube
$C(i,0,k)\in\mathcal P$ having $a$ as a vertex. Since $a_1\geq l_1$,
either $i\geq l_1$, in which case $C(i,0,k)\in\mathcal P_2$, or
$i=l_1-1$ and $a_1=l_1$. In the latter case, the local form of
$\mathcal P$ shown in \Cref{fig:vertexdecomhorizontal} gives $k<n_1$ and
$C(l_1,0,k)\in\mathcal P_2$, which also contains $a$.
Hence $V(\mathcal P_2)=Q_2$.

Next we compare the orders. Let $a=(a_1,a_2,a_3),b=(b_1,b_2,b_3)\in V(\MP_2)$. If $a\le b$ in
$\mathbb N^3$, then $a\le b$ in both $Q_\MP$ and $Q_{\MP_2}$. Otherwise, by
\Cref{def:QP} the relation $a\le b$ holds in $Q_\MP$ (resp.\ in $Q_{\MP_2}$)
if and only if $\pi(a)\le\pi(b)$ and $a,b$ do not lie in a common cuboid of
$\MP$ (resp.\ of $\MP_2$). It therefore suffices to show that $a,b$ lie in a
common cuboid of $\MP$ if and only if they lie in a common cuboid of $\MP_2$.
One implication is trivial, as every cuboid of $\MP_2$ is a cuboid of $\MP$.
Conversely, let $D=[(i_1,0,k_1),(i_2,1,k_2)]\subseteq\MP$ be a cuboid with
$a,b\in L_D$. If $i_2>l_1$, then $D'=[(\max\{i_1,l_1\},0,k_1),(i_2,1,k_2)]$ is a cuboid all
of whose unit-cubes lie in $\MP_2$, and $a,b\in L_{D'}$ because $a_1,b_1\ge l_1$.
If $i_2=l_1$, then $a,b$ lie on the face $x_1=l_1$ of $D$, and $k_2\le n_1$
since the unit-cubes $C(l_1-1,0,0),\ldots, C(l_1-1,0,n_1-1) \in \MP$ and $C(l_1-1,0,n_1) \notin \MP$; hence
$a,b\in L_{D'}$ for the cuboid $D'=[(l_1,0,k_1),(l_1+1,1,k_2)]$, whose unit-cubes
$C(l_1,0,k)$, $k_1\le k<k_2\le n_1$, belong to $\MP_2$.
Hence $Q_{\MP_2}=Q_2$ as posets; in other words, $Q_{\MP_2}$ is the closed
interval $[(l_1,0,0),\hat1]$ of $Q_\MP$. 

Since $Q_{\MP_2}$ is an interval of $Q_\MP$, the cover relations of $Q_{\MP_2}=Q_2$ are exactly the cover relations of $Q_\MP$ between elements of $Q_2$. Hence, we get $\MS_{\MP_2}\subseteq\MS_\MP$ and
$\ML_{\MP_2}\subseteq\ML_\MP\cap Q_2$. Thus $\ML_{\MP_2}$ is a chain, and as
$(l_1-1,1,n_1)\in\ML_\MP\setminus Q_2$ we have $\#\ML_{\MP_2}<\#\ML_\MP$.
Since $\MP_2$ satisfies \Cref{setup:polycube}, by the induction hypothesis, $\Delta(Q_2)=\Delta(Q_{\MP_2})$ is vertex
decomposable.

\medskip\noindent
\emph{Step 3: $\Delta(Q_3)$ is vertex decomposable.}
Let $M$ be a saturated chain of $Q_2$ from $(l_1,0,0)$ to $(l_1,0,n_1+1)$.
An element $x\in Q_2\setminus M$ is comparable to every element of $M$ if and
only if $x>(l_1,0,n_1+1)$. Thus, since $Q_3 = [(l_1,0,n_1+1),\hat{1}]_{Q_2}$, we get
\[
\lk_{\Delta(Q_2)}(M)=\Delta\bigl(Q_3\setminus\{(l_1,0,n_1+1)\}\bigr),
\]
which is vertex decomposable because $\Delta(Q_2)$ is vertex decomposable and links of faces of vertex
decomposable complexes are vertex decomposable~\cite{PB80vertexdecomposable}.
Since $(l_1,0,n_1+1)$ is the minimal element of $Q_3$, we have the ordinal sum
decomposition $Q_3=\{(l_1,0,n_1+1)\}\oplus\bigl(Q_3\setminus\{(l_1,0,n_1+1)\}\bigr)$,
so that
\[
\Delta(Q_3)=\Delta\bigl(\{(l_1,0,n_1+1)\}\bigr)*\Delta\bigl(Q_3\setminus\{(l_1,0,n_1+1)\}\bigr)
\]
is a cone over a vertex decomposable complex, and hence $\Delta(Q_3)$ is vertex
decomposable by \Cref{lem:join}.

\medskip\noindent
\emph{Step 4: a shedding order on $V(B)$.}
Set
\[
p=l_2-l_1+1
\qquad\text{and}\qquad
s=2p(n_1+1)=|V(B)|.
\]
Order the vertices of $B$ as follows. For
$t=0,\ldots,n_1$, and for each fixed $t$, let
$r=l_2,l_2-1,\ldots,l_1$. For each $r$, list $(r,1,t)$ before $(r,0,t).$ Denote the resulting sequence by
$v_1,\ldots,v_s$; that is, 
\[
\begin{aligned}
&v_1=(l_2,1,0),\; v_2=(l_2,0,0),\; v_3=(l_2-1,1,0),\; v_4=(l_2-1,0,0),\;
\ldots,\;\\& v_{2p-1}=(l_1,1,0),\; v_{2p}=(l_1,0,0),\\
&v_{2p+1}=(l_2,1,1),\; v_{2p+2}=(l_2,0,1),\;\ldots,\;
 v_{4p-1}=(l_1,1,1),\; v_{4p}=(l_1,0,1),\\
&\hspace{2cm}\vdots\\
&v_{s-2p+1}=(l_2,1,n_1),\; v_{s-2p+2}=(l_2,0,n_1),\;\ldots,\;
 v_{s-1}=(l_1,1,n_1),\; v_s=(l_1,0,n_1).
\end{aligned}
\]
Set $\Delta_{0,1}=\Delta(Q_\MP)$ and, for
$1\le i\le s$,
\[
\Delta_{i,1}=\del_{\Delta_{i-1,1}}(v_i),
\qquad
\Delta_{i,2}=\lk_{\Delta_{i-1,1}}(v_i),
\qquad
Q^{(i)}=Q_\MP\setminus\{v_1,\ldots,v_{i-1}\}.
\]
Note that $\Delta_{i-1,1}=\Delta(Q^{(i)})$, since deleting vertices from an
order complex yields the order complex of the induced subposet. 

	\begin{figure}[h]
		\begin{subfigure}[t]{5cm}
			\centering
    \begin{tikzpicture}[scale=1.75]
\draw[](1,0+1,0)--(1+1.25,0+1,0) (1+1.25,0+1,0-1)--(1,0+1,0-1)--(1,0+1,0);

\draw[dotted, thick](1+1.75,0+1,0)--(1+1.25,0+1,0) (1+1.75,0+1,-1)--(1+1.25,0+1,-1);

\draw[](1+1.75,0+1,0)--(1+2,0+1,0) -- (1+2,0+1,0-1)--(1+1.75,0+1,0-1);

\draw[](1,0+.5,0)--(1+1.25,0+.5,0) (1+1.25,0+.5,0-1)--(1,0+.5,0-1)--(1,0+.5,0);

\draw[dotted, thick](1+1.75,0+.5,0)--(1+1.25,0+.5,0) (1+1.75,0+.5,-1)--(1+1.25,0+.5,-1);

\draw[](1+1.75,0+.5,0)--(1+2,0+.5,0) -- (1+2,0+.5,0-1)--(1+1.75,0+.5,0-1);

\draw[] (1,0+1,0)--(1,0,0);
\draw[] (1,0+1,0-1)--(1,0,0-1);
\draw[] (3,0+1,0)--(3,0.5,0);
\draw[] (3,0+1,0-1)--(3,0.5,0-1);

\draw[](1+1,0+.5,0)--(1+1,0+.5,0-1);
\draw[](1+1,0+1,0)--(1+1,0+1,0-1);

\draw[](1,0,0)--(1+1,0,0) -- (1+1,0,0-1)--(1,0,0-1)--(1,0,0);
\draw[](1+1,0+1,0)--(1+1,0,0);
\draw[](1+1,0+1,0-1)--(1+1,0,-1);

 \node [draw, circle, fill=black, inner sep=.5pt, label=below left:{\tiny{{$(r-1,0,t)$}}}] at (1,0,0) {}; 
 \node [draw, circle, fill=black, inner sep=.5pt, label=left:{\tiny{{$(r-1,1,t)$}}}] at (1,0,-1) {}; 

 \node [draw, circle, fill=black, inner sep=.5pt, label=below right:{\tiny{{$(r,0,t)$}}}] at (1+1,0,0) {}; 
 \node [draw, circle, fill=black, inner sep=.5pt, label=below right:{\tiny{{$v_i = (r,1,t)$}}}] at (1+1,0,-1) {}; 

 \node [draw, circle, fill=black, inner sep=.5pt, label=below right:{\tiny{{$(r,1,t+1)$}}}] at (1+1,.5,-1) {}; 
 
 \node [draw, circle, fill=black, inner sep=.5pt, label=right:{\tiny{{$(l_2,1,t+1)$}}}] at (1+2,.5,-1) {}; 
 \node [draw, circle, fill=black, inner sep=.5pt, label=above:{\tiny{{$(l_2,1,n_1)$}}}] at (1+2,1,-1) {}; 

\end{tikzpicture}
	\caption{neighbourhood of $v_i = (r,1,t)$}
	\label{fig:sheddingy=1}
		\end{subfigure}
		\quad
		\begin{subfigure}[t]{10.25cm}
			\centering	
    \begin{tikzpicture}[scale=1.75]
\draw[](1,0+1,0)--(1+1.25,0+1,0) (1+1.25,0+1,0-1)--(1,0+1,0-1)--(1,0+1,0);

\draw[dotted, thick](1+1.75,0+1,0)--(1+1.25,0+1,0) (1+1.75,0+1,-1)--(1+1.25,0+1,-1);

\draw[](1+1.75,0+1,0)--(1+2,0+1,0) -- (1+2,0+1,0-1)--(1+1.75,0+1,0-1);

\draw[](1,0+.5,0)--(1+1.25,0+.5,0) (1+1.25,0+.5,0-1)--(1,0+.5,0-1)--(1,0+.5,0);

\draw[dotted, thick](1+1.75,0+.5,0)--(1+1.25,0+.5,0) (1+1.75,0+.5,-1)--(1+1.25,0+.5,-1);

\draw[](1+1.75,0+.5,0)--(1+2,0+.5,0) -- (1+2,0+.5,0-1)--(1+1.75,0+.5,0-1);

\draw[] (1,0+1,0)--(1,0,0);
\draw[] (1,0+1,0-1)--(1,0,0-1);
\draw[] (3,0+1,0)--(3,0.5,0);
\draw[] (3,0+1,0-1)--(3,0.5,0-1);

\draw[](1+1,0+.5,0)--(1+1,0+.5,0-1);
\draw[](1+1,0+1,0)--(1+1,0+1,0-1);

\draw[](1,0,0)--(1+1,0,0) (1,0,0-1)--(1,0,0);
\draw[](1+1,0+1,0)--(1+1,0,0);

 \node [draw, circle, fill=black, inner sep=.5pt, label=below left:{\tiny{{$(r-1,0,t)$}}}] at (1,0,0) {}; 
 \node [draw, circle, fill=black, inner sep=.5pt, label=left:{\tiny{{$(r-1,1,t)$}}}] at (1,0,-1) {}; 

 \node [draw, circle, fill=black, inner sep=.5pt, label=below right:{\tiny{{$(r,0,t)$}}}] at (1+1,0,0) {}; 

 \node [draw, circle, fill=black, inner sep=.5pt, label=below right:{\tiny{{$(r,0,t+1)$}}}] at (1+1,.5,0) {}; 

  \node [draw, circle, fill=black, inner sep=.5pt, label=right:{\tiny{{$(l_2,1,t+1)$}}}] at (1+2,.5,-1) {}; 
 \node [draw, circle, fill=black, inner sep=.5pt, label=above:{\tiny{{$(l_2,1,n_1)$}}}] at (1+2,1,-1) {}; 

\end{tikzpicture}
	\caption{neighbourhood of $v_i = (r,0,t)$}
	\label{fig:sheddingy=0}            
		\end{subfigure}
     \caption{}   
	\end{figure}

\noindent
\textbf{Claim.} For every $1\le i\le s$, the vertex $v_i$ is a shedding vertex
of $\Delta_{i-1,1}$; moreover, $v_i$ is covered in $Q^{(i)}$ by exactly one
element, namely $w_i=v_i+e_3$.

\noindent
\emph{Proof of the Claim.}
We proceed by induction on $i$.
For $i=1$, we have $\Delta_{0,1}=\Delta(Q_{\mathcal P})$, which is pure by \cref{lem:graded}. 
Let $L$ be a maximal chain of $Q_\MP$ that contains $v_1=(l_2,1,0)$. 
The only element covering $v_1$ in $Q_\MP$ is $(l_2,1,1)$; there is no cover relation from $\MS_P$ starting at $v_1$ because $\ML_\MP$  is a chain, and $v_1$ and $(l_1,0,n_1+1)\in \ML_\MP$ are incomparable in $Q_\MP$ (see \cref{fig:vertexdecomhorizontal}). 
The elements covered by $v_1$ in $Q_\MP$ are $(l_2,0,0)$ and $(l_2-1,1,0)$; exactly one of them lies in $L$. 

If $(l_2,0,0) \in L$, then $(L\setminus \{v_1\}) \cup \{(l_2,0,1)\}$ is a maximal chain of $Q_\MP$. If $(l_2-1,1,0) \in L$, then $(L\setminus \{v_1\}) \cup \{(l_2-1,1,1)\}$ is a maximal chain of $Q_\MP$. Thus, $v_1$ is a shedding vertex of $\Delta_{0,1}$.

Now let $i>1$ and assume that
$v_1,\ldots,v_{i-1}$ have already been shown to be shedding vertices.
Since the deletion of a shedding vertex from a pure simplicial complex
is again pure of the same dimension, it follows inductively that
$\Delta_{i-1,1}$ is pure of dimension
$\dim\Delta(Q_{\mathcal P})$.
Hence its facets are precisely the maximal chains of
$Q^{(i)}$ avoiding $v_1,\ldots,v_{i-1}$. Let $L$ be a facet of $\Delta_{i-1,1}$ that contains $v_i$. We
exhibit an element $u\in Q^{(i)}\setminus L$ such that
$(L\setminus\{v_i\})\cup\{u\}$ is again a facet of $\Delta_{i-1,1}$; this
shows that $v_i$ is a shedding vertex.

Suppose first that $v_i=(r,1,t)$ with $l_1\le r\le l_2$ and $0\le t\le n_1$
(see \Cref{fig:sheddingy=1}). The elements covering $v_i$ in $Q_\MP$ are
$(r,1,t+1)$ and, if $r<l_2$, the element $(r+1,1,t)$; there is no
cover relation from $\MS_P$ starting at $v_i$, because $C(r,0,t)\in\MP$ whenever $r<l_2$. As
$(r+1,1,t)$ has already been deleted, $w_i=(r,1,t+1)$ is the only element
covering $v_i$ in $Q^{(i)}$; thus $w_i\in L$. The elements covered by $v_i$ in
$Q^{(i)}$ are $(r,0,t)$ and $(r-1,1,t)$, the element $(r,1,t-1)$ having been
deleted; exactly one of them lies in $L$.

If $(r,0,t)\in L$, take $u=(r,0,t+1)$; then $(r,0,t)\lessdot u\lessdot w_i$.
If $(r-1,1,t)\in L$ and $(r,t)\neq(l_1,n_1)$, take $u=(r-1,1,t+1)\in V(\MP)$;
then $(r-1,1,t)\lessdot u\lessdot w_i$.
If $(r-1,1,t)=(l_1-1,1,n_1)\in L$, take $u=(l_1,0,n_1+1)$; then
$(l_1-1,1,n_1)\lessdot u\lessdot(l_1,1,n_1+1)=w_i$. In each case $u$ is not
deleted, and hence $(L\setminus\{v_i\})\cup\{u\}$ is a maximal chain of $Q^{(i)}$.

Suppose now that $v_i=(r,0,t)$ (see \Cref{fig:sheddingy=0}). Since $(r,1,t)$
and, if $r<l_2$, $(r+1,0,t)$ have been deleted, $w_i=(r,0,t+1)$ is the only
element covering $v_i$ in $Q^{(i)}$; and since $(r,0,t-1)$ has been deleted,
$(r-1,0,t)$ is the only element covered by $v_i$ in $Q^{(i)}$. No element of $\MS_\MP$ ends at $v_i$, it would have been 
$\{(r-1,1,t-1),v_i\}$ with $C(r-1,0,t-1)\notin \MP$, whereas $C(r-1,0,t-1)\in \MP$.   
Thus $(r-1,0,t),w_i\in L$. If $(r,t)\neq(l_1,n_1)$, take $u=(r-1,0,t+1)$; then
$(r-1,0,t)\lessdot u\lessdot w_i$. If $(r,t)=(l_1,n_1)$, i.e.\ $i=s$, take
$u=(l_1-1,1,n_1)$; then $(l_1-1,0,n_1)\lessdot u\lessdot(l_1,0,n_1+1)=w_s$.
Again $u\in Q^{(i)}$, and $(L\setminus\{v_i\})\cup\{u\}$ is a facet of
$\Delta_{i-1,1}$. This proves the Claim.

\medskip\noindent
\emph{Step 5: every link $\Delta_{i,2}$ is vertex decomposable.}
Fix $1\le i\le s$ and write $v_i=(r,j,t)$. Since
$\Delta_{i-1,1}=\Delta(Q^{(i)})$, we have
\[
\Delta_{i,2}=\lk_{\Delta(Q^{(i)})}(v_i)
=\Delta\bigl(Q^{(i)}_{<v_i}\bigr)*\Delta\bigl(Q^{(i)}_{>v_i}\bigr),
\]
and by \Cref{lem:join} it suffices to show that both factors are vertex
decomposable.

For the upper factor we claim that $Q^{(i)}_{>v_i}=[w_i,\hat1]_{Q_2}$. Every
element $a= (a_1,a_2,a_3)\ge w_i$ of $Q_\MP$ has $a_3 \ge t+1$, so it lies in $Q^{(i)}$,
and $a>v_i$. Conversely, let $a\in Q^{(i)}$ with $a>v_i$. Extend $\{v_i,a\}$
to a maximal chain of $Q^{(i)}$; by $\dim(\Delta_{i-1,1}) =\dim(\Delta(Q_\MP))$ it is a
saturated chain of $Q_\MP$, so the element following $v_i$ in it covers $v_i$
in $Q_\MP$ and lies in $Q^{(i)}$. By the Claim this element is $w_i$, whence
$a\ge w_i$. Now $[w_i,\hat1]_{Q_2}$ is a closed interval of $Q_2$, and
\[
\Delta\bigl([w_i,\hat1]_{Q_2}\bigr)
=\{w_i\}*\lk_{\Delta(Q_2)}\bigl(F\cup\{w_i\}\bigr)
\]
for any maximal chain $F$ of $[(l_1,0,0),w_i)_{Q_2}$. By the same argument
as in Step~3, it is vertex decomposable.

For the lower factor, let $R$ be the poset of \Cref{lem:auxiliary} for the first coordinate $r$ and third coordinate $t$ of $v_i$.
All elements of $Q_\MP$ below $v_i$ lie in the cuboid $D = [(0,0,0),(l_2,1,n_1)]$, where the order relations of $Q_\MP$ are precisely those of $V(D)$. Among them the deleted ones are exactly with first coordinate $\geq l_1$ and third coordinate $<t$; the other deleted vertices are not below $v_i$. Therefore, $Q^{(i)}_{<v_i}=[(0,0,0),v_i)_R$ as a poset, and $\Delta\bigl(Q^{(i)}_{<v_i}\bigr)$ is vertex decomposable by \Cref{lem:auxiliary}. Hence $\Delta_{i,2}$ is vertex
decomposable.

\medskip\noindent
\emph{Step 6: conclusion.}
After deleting $v_1,\ldots,v_s$ we are left with
$V(Q_\MP\setminus B)=V(Q_1)\sqcup V(Q_3)$. Since every element of $Q_1$ lies below
every element of $Q_3$, we get
\[
\Delta_{s,1}=\Delta(Q_1\dirsum Q_3)=\Delta(Q_1)*\Delta(Q_3),
\]
which is vertex decomposable by Steps~1 and~3 and \Cref{lem:join}. Since each
$v_i$ is a shedding vertex of $\Delta_{i-1,1}$ with vertex decomposable link
$\Delta_{i,2}$, descending induction on $i$ shows that every $\Delta_{i,1}$,
$s\ge i\ge0$, is vertex decomposable. In particular
$\Delta(Q_\MP)=\Delta_{0,1}$ is vertex decomposable.
\end{proof}

\subsection{Main theorem}
The preceding results yield the following characterization.

\begin{Theorem}\label{thm:main}
Let $\MP$ satisfy \cref{setup:polycube}. The following are equivalent:
\begin{enumerate}
\item\label{thm:main:CMring}
    $K[\MP]$ is Cohen--Macaulay for any field $K$;
\item\label{thm:main:CMposet}
    $\Delta(Q_\MP)$ is Cohen--Macaulay;
\item \label{thm:main:VD}
    $\Delta(Q_\MP)$ is vertex decomposable;
\item \label{thm:main:polycube} 
    $\MP$ is a cuboid, or $m=1$ and $\ML_\MP$ is a chain in $Q_\MP$.
\end{enumerate}
\end{Theorem}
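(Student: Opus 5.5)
We prove the cycle of implications $(1)\Leftrightarrow(2)$, $(3)\Rightarrow(2)\Rightarrow(4)\Rightarrow(3)$, assembling results already established.

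\emph{$(1)\Leftrightarrow(2)$.} By Theorem~\ref{thm:GB}, $\mathcal G_\MP$ is a quadratic Gr\"obner basis, so $\operatorname{in}_{<_{\mathrm{rev}}}(I_\MP)$ is squarefree. By Lemma~\ref{lem:initial-order-complex} it equals $I_{\Delta(Q_\MP)}$. For a fixed field $K$, the implication ``$\Leftarrow$'' of \cite[Theorem~2.19]{HHO18} and the theorem of Conca--Varbaro \cite{CV20squarefree} show that $K[\MP]$ is Cohen--Macaulay exactly when $S/I_{\Delta(Q_\MP)}$ is. By Reisner's criterion, the latter holds exactly when $\Delta(Q_\MP)$ is Cohen--Macaulay over $K$. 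Quantifying over all fields $K$ gives the equivalence of (1) and (2).

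\emph{$(3)\Rightarrow(2)$.} This is the result of Provan--Billera: vertex decomposable complexes are shellable, hence Cohen--Macaulay over every field.

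\emph{$(2)\Rightarrow(4)$.} If $\MP$ is a cuboid, there is nothing to show. Otherwise Proposition~\ref{prop:height-one} gives $m=1$; its hypothesis is Cohen--Macaulayness of $S/\operatorname{in}_{<_{\mathrm{rev}}}(I_\MP)$, which is equivalent to (2) by Lemma~\ref{lem:initial-order-complex}. Theorem~\ref{thm:LP-chain} then shows that $\ML_\MP$ is a chain in $Q_\MP$.

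\emph{$(4)\Rightarrow(3)$.} If $\MP$ is a cuboid, then every incomparable pair of $V(\MP)$ lies in some $L_D$, so $Q_\MP=V(\MP)$ with the coordinatewise order. This is a distributive lattice, and its order complex is vertex decomposable by \cite{PB80vertexdecomposable}. Otherwise $m=1$, so after a translation in the $x$- and $z$-directions we have $\MP\subseteq[(0,0,0),(l,1,n)]$ with $l,n\ge1$. Translations induce isomorphisms of $Q_\MP$, so Theorem~\ref{thm:chain_vd} applies and gives vertex decomposability.

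The main subtlety is bookkeeping rather than new mathematics. First, we must make sure that ``Cohen--Macaulay'' in (2) is understood over every field, so that it matches (1) through Reisner's criterion applied field by field. Second, the translation normalizing $\MP$ into the box $[(0,0,0),(l,1,n)]$ must preserve Setup~\ref{setup:polycube} and the poset $Q_\MP$ up to isomorphism, as already observed in the proof of Proposition~\ref{prop:height-one}. All substantive work is contained in the cited results.
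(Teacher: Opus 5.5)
Your proposal is correct and matches the paper's proof essentially step for step. It uses the same cycle of implications: Lemma~\ref{lem:initial-order-complex} with the Gr\"obner-degeneration/Conca--Varbaro equivalence for (1)$\Leftrightarrow$(2), Proposition~\ref{prop:height-one} and Theorem~\ref{thm:LP-chain} for (2)$\Rightarrow$(4), Provan--Billera and Theorem~\ref{thm:chain_vd} for (4)$\Rightarrow$(3), and Provan--Billera again for (3)$\Rightarrow$(2); your extra bookkeeping (applying Reisner field by field, and placing $\MP$ in a box $[(0,0,0),(l,1,n)]$ when $m=1$) only makes explicit what the paper leaves implicit.
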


\begin{proof}
    \eqref{thm:main:CMring} $\iff$ \eqref{thm:main:CMposet} follows from \cref{lem:initial-order-complex} and the equivalence recalled at the beginning of the section. 
    
    \eqref{thm:main:CMposet} $\implies$ \eqref{thm:main:polycube} if $\MP$ is not a cuboid, then $m=1$ by \cref{prop:height-one}, and $\ML_\MP$ is a chain by \cref{thm:LP-chain}.

    \eqref{thm:main:polycube} $\implies$ \eqref{thm:main:VD} if $\MP$ is a cuboid, then $V(\MP)$ is a distributive lattice, and hence $\Delta(Q_\MP)$ is vertex decomposable by \cite{PB80vertexdecomposable}; otherwise, $\Delta(Q_\MP)$ is vertex decomposable by \cref{thm:chain_vd}.
    Finally, \eqref{thm:main:VD} $\implies$ \eqref{thm:main:CMposet} follows from \cite{PB80vertexdecomposable}.
\end{proof}

\section{$L$-convex polycubes}\label{sec:L-convex}

In the previous sections, we studied a class of polycubes whose
defining ideals admit quadratic Gr\"obner bases. We now consider
$L$-convex polycubes and show that their coordinate rings can be
realized as the coordinate rings of polycubes belonging to that
class. The construction is obtained by projecting onto the
$xz$-plane, applying the corresponding construction for
$L$-convex polyominoes, and then lifting the resulting planar
object in the $y$-direction.

We continue to assume the second condition of
Setup~\ref{setup:polycube}, concerning the uniform structure of
the $y$-columns but we no longer assume that $\MP$ is convex and that
$V(\mathcal P)$ is a sublattice of $\mathbb N^3$.

\begin{Setup}\label{setup:L-convex}
Fix an integer $m\geq1$, and let $\mathcal P$ be a
polycube such that every nonempty $y$-column of $\mathcal P$ is of the form $\{C(i,j,k):0\leq j\leq m-1\}$ for some $i,k\in\mathbb N$.
\end{Setup}

For a polycube $\mathcal P$ satisfying
Setup~\ref{setup:L-convex}, let $\mathcal P_{xz}$ denote its
projection onto the $xz$-plane. More precisely,
\[
\mathcal P_{xz}
=
\big\{
[(i,k),(i+1,k+1)]: C(i,j,k)\in\mathcal P
\text{ for some }j
\big\}.
\]

We say that $\mathcal P$ is \emph{$L$-convex} if $\mathcal P_{xz}$ is an
$L$-convex polyomino. Recall that a polyomino is called $L$-convex if
any two of its cells can be connected by a path contained in the
polyomino with at most one change of direction.  We draw a \emph{Ferrers diagram} with a common left edge and weakly increasing row lengths from bottom to top. Thus, up to translation, a Ferrers diagram is a polyomino of the form
\[
\big\{[(i,k),(i+1,k+1)]\, : \, 0\leq k <n, \, 0 \leq i < \lambda_k\big\}, \qquad 1\leq \lambda_0\leq \cdots\leq \lambda_{n-1}\,.
\]

For a polyomino $\mathcal R$, we denote by $K[\mathcal R]$ its
polyomino algebra, as defined in Section~3. We refer to~\cite{EHQR21}
for further details on $L$-convex polyominoes and for the construction
of the Ferrers diagram associated with an $L$-convex polyomino.

Let $\mathcal P$ be an $L$-convex polycube. By
\cite[Theorem~3.1]{EHQR21}, there exists a Ferrers diagram
$\mathcal Q_{xz}$, obtained from $\mathcal P_{xz}$ by the
construction described there, such that
\[
K[\mathcal P_{xz}]
\cong
K[\mathcal Q_{xz}]
\]
as $K$-algebras. We lift $\mathcal Q_{xz}$ to a polycube
$\mathcal Q_{\mathcal P}$ by replacing each unit square of
$\mathcal Q_{xz}$ with a column of $m$ unit-cubes in the
$y$-direction. More precisely,
\[
\mathcal Q_{\mathcal P}
=
\big\{
C(i,j,k): [(i,k),(i+1,k+1)] \in\mathcal Q_{xz},
\ 0\leq j\leq m-1
\big\}.
\]

\begin{Lemma}\label{lem:Q-setup}
The polycube $\mathcal Q_{\mathcal P}$ satisfies
Setup~\ref{setup:polycube}.
\end{Lemma}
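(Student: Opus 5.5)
We need to verify the three requirements of Setup~\ref{setup:polycube} for $\mathcal Q_{\mathcal P}$: it is a polycube, it is convex, $V(\mathcal Q_{\mathcal P})$ is a sublattice of $\mathbb N^3$, and its $y$-columns have uniform height $m$. Up to translation we write $\mathcal Q_{xz}=\{[(i,k),(i+1,k+1)]:0\le k<n,\ 0\le i<\lambda_k\}$ with $1\le\lambda_0\le\cdots\le\lambda_{n-1}$.

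\emph{Uniform columns.} Condition (2) holds by construction. For each unit square $[(i,k),(i+1,k+1)]\in\mathcal Q_{xz}$, the $y$-column at $(i,k)$ is exactly $\{C(i,j,k):0\le j\le m-1\}$. Every other $y$-column is empty.

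\emph{Polycube and convexity.} Face-connectedness will follow from two facts. A Ferrers diagram is a connected polyomino: each row is a horizontal segment starting at $i=0$, and consecutive rows share the cell at $i=0$. Each column of $m$ cubes is connected in the $y$-direction.

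For convexity, we use that $\mathcal Q_{\mathcal P}$ as a subset of $\mathbb R^3$ equals $\mathcal Q_{xz}\times[0,m]$ (with coordinates reordered). A segment parallel to the $y$-axis with endpoints in $\mathcal Q_{\mathcal P}$ stays inside, since $[0,m]$ is an interval. A segment parallel to the $x$- or $z$-axis lies in a fixed level $y=c$, so the claim reduces to row- and column-convexity of the Ferrers diagram as a closed subset of $\mathbb R^2$.
\begin{itemize}
\item Row-convexity: each horizontal slice is an interval $[0,\lambda_k]$. At a height $z=k$ lying on the boundary between rows, the slice is the union $[0,\lambda_{k-1}]\cup[0,\lambda_k]$, which is again an interval.
\item Column-convexity: since the $\lambda_k$ are weakly increasing, the column over $x$ is an interval $[k_0,n]$.
\end{itemize}

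\emph{Sublattice.} This is the step requiring the most care. The vertex set is
\[
V(\mathcal Q_{\mathcal P})=\{(i,j,k):0\le j\le m,\ (i,k)\in V(\mathcal Q_{xz})\},
\]
so it suffices to show that $V(\mathcal Q_{xz})$ is closed under coordinatewise min and max in $\mathbb N^2$. Concretely,
\[
V(\mathcal Q_{xz})=\{(i,k):0\le k\le n,\ 0\le i\le \mu_k\},
\]
where $\mu_0=\lambda_0$, $\mu_k=\max(\lambda_{k-1},\lambda_k)=\lambda_k$ for $1\le k<n$, and $\mu_n=\lambda_{n-1}$. The sequence $\mu$ is weakly increasing.

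Now take $(i,k),(i',k')$ in this set with $k\le k'$.
\begin{itemize}
\item For the meet $(\min(i,i'),k)$: we have $\min(i,i')\le i\le\mu_k$.
\item For the join $(\max(i,i'),k')$: we have $i\le\mu_k\le\mu_{k'}$ and $i'\le\mu_{k'}$, so $\max(i,i')\le\mu_{k'}$.
\end{itemize}
Both points therefore lie in $V(\mathcal Q_{xz})$. The $y$-coordinate factor $\{0,\dots,m\}$ is a chain, hence closed under min and max, so the product $V(\mathcal Q_{\mathcal P})$ is a sublattice of $\mathbb N^3$.

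The main obstacle is getting the description of $V(\mathcal Q_{xz})$ exactly right at the top and bottom rows, in particular the value $\mu_n=\lambda_{n-1}$. Once that description is fixed, the rest is routine.
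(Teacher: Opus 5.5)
Your proposal is correct and follows the paper's route. The paper also gets the uniform columns from the construction, convexity from the convexity of the Ferrers diagram, and the sublattice property from the product structure $V(\mathcal Q_{\mathcal P})=\{(i,j,k):(i,k)\in V(\mathcal Q_{xz}),\ 0\le j\le m\}$. You go further by proving the facts the paper only asserts: row and column convexity of the Ferrers diagram, the explicit description of $V(\mathcal Q_{xz})$ via the weakly increasing sequence $\mu_k$ together with its closure under coordinatewise min and max, and face-connectedness of the lift.
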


\begin{proof}
By construction, every nonempty $y$-column of
$\mathcal Q_{\mathcal P}$ consists of exactly the cubes
\[
C(i,j,k),\qquad 0\leq j\leq m-1.
\]
Moreover, since $\mathcal Q_{xz}$ is a Ferrers diagram, it is
convex. Hence its lift $\mathcal Q_{\mathcal P}$ is convex as
well.

It remains to verify that $V(\mathcal Q_{\mathcal P})$ is a
sublattice of $\mathbb N^3$. Since $\mathcal Q_{xz}$ is a
Ferrers diagram, $V(\mathcal Q_{xz})$ is a sublattice of
$\mathbb N^2$. Furthermore,
\[
V(\mathcal Q_{\mathcal P})
=
\{(i,j,k):(i,k)\in V(\mathcal Q_{xz}),\ 0\leq j\leq m\}.
\]
Thus $V(\mathcal Q_{\mathcal P})$ is closed under coordinatewise meet and join, and hence is a sublattice of $\mathbb N^3$.
\end{proof}

Before studying the relation between
the coordinate rings of $\mathcal P$ and
$\mathcal Q_{\mathcal P}$, we illustrate the construction in a simple example.

\begin{Example}\label{ex:L-convex}
Let $\mathcal P$ be the polycube shown in
Figure~\ref{fig:L-convex-construction}(a). Its projection
$\mathcal P_{xz}$ onto the $xz$-plane, shown in
Figure~\ref{fig:L-convex-construction}(b), is an $L$-convex
polyomino. Hence $\mathcal P$ is an $L$-convex polycube. Applying the construction of \cite{EHQR21} to
$\mathcal P_{xz}$ gives the Ferrers diagram $\mathcal Q_{xz}$ shown in Figure~\ref{fig:L-convex-construction}(c). Lifting
$\mathcal Q_{xz}$ in the $y$-direction gives the polycube $\mathcal Q_{\mathcal P}$ shown in
Figure~\ref{fig:L-convex-construction}(d).
\end{Example}

\begin{figure}[htbp]
\centering

\begin{subfigure}[t]{0.45\textwidth}
\centering
\begin{tikzpicture}[scale=.85]
    \unitcube{1}{0}{0};
    \unitcube{1}{1}{0};
    \unitcube{0}{1}{0};
    \unitcube{2}{1}{0};
    \unitcube{1}{2}{0};
\end{tikzpicture}
\caption{$\mathcal P$}
\label{fig:Lconvex-P}
\end{subfigure}
\hfill
\begin{subfigure}[t]{0.45\textwidth}
\centering
\begin{tikzpicture}[scale=.85]
    \foreach \x/\z in {1/0,1/1,1/2,0/1,2/1}
        \draw[fill=gray!12] (\x,\z) rectangle ++(1,1);
\end{tikzpicture}
\caption{$\mathcal P_{xz}$}
\label{fig:Lconvex-Pxz}
\end{subfigure}

\medskip

\begin{subfigure}[t]{0.45\textwidth}
\centering
\begin{tikzpicture}[scale=.85]
    \foreach \x/\z in {0/0,0/1,0/2,1/2,2/2}
        \draw[fill=gray!12] (\x,\z) rectangle ++(1,1);
\end{tikzpicture}
\caption{$\mathcal Q_{xz}$}
\label{fig:Lconvex-Qxz}
\end{subfigure}
\hfill
\begin{subfigure}[t]{0.45\textwidth}
\centering
\begin{tikzpicture}[scale=.85]
    \unitcube{0}{0}{0};
    \unitcube{0}{1}{0};
    \unitcube{0}{2}{0};
    \unitcube{1}{2}{0};
    \unitcube{2}{2}{0};
\end{tikzpicture}
\caption{$\mathcal Q_{\mathcal P}$}
\label{fig:Lconvex-QP}
\end{subfigure}

\caption{The construction of $\mathcal Q_{\mathcal P}$ from the
$L$-convex polycube $\mathcal P$.}
\label{fig:L-convex-construction}
\end{figure}

\begin{Remark}\label{rem:cuboid-rectangle}
Let $\mathcal P$ be a polycube satisfying
Setup~\ref{setup:L-convex}. We use the terminology of \cite{EHQR21}: an \emph{inner interval}
of a polyomino $P$ is an axis-parallel rectangle whose
constituent unit squares all belong to $P$. If $D=[(i_1,j_1,k_1),(i_2,j_2,k_2)]
\subseteq\mathcal P$ is a cuboid, then its projection $[(i_1,k_1), (i_2,k_2)]$ is an inner interval of $\mathcal P_{xz}$. Conversely, if $[(i_1,k_1), (i_2,k_2)]$ is an inner interval of $\mathcal P_{xz}$, then the cuboid $[(i_1,j_1,k_1),(i_2,j_2,k_2)]
\subseteq\mathcal P$ for every $0\leq j_1<j_2\leq m$.
\end{Remark}

We are now ready to compare the coordinate rings of
$\mathcal P$ and $\mathcal Q_{\mathcal P}$.

\begin{Theorem}\label{thm:L-convex-isomorphism}
Let $\mathcal P$ be an $L$-convex polycube satisfying
Setup~\ref{setup:L-convex}. Then
\[
K[\mathcal P]\cong K[\mathcal Q_{\mathcal P}]
\]
as standard graded $K$-algebras.
\end{Theorem}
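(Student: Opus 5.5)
We deduce the statement from the planar isomorphism $K[\mathcal P_{xz}]\cong K[\mathcal Q_{xz}]$ of \cite[Theorem~3.1]{EHQR21}, lifted through the $y$-direction. The key observation is that, under Setup~\ref{setup:L-convex}, $I_{\mathcal P}$ is determined by the inner intervals of $\mathcal P_{xz}$ together with the fixed index set $\{0,\ldots,m\}$ of $y$-values. By Remark~\ref{rem:cuboid-rectangle}, the cuboids of $\mathcal P$ are exactly the sets $[(i_1,j_1,k_1),(i_2,j_2,k_2)]$ with $[(i_1,k_1),(i_2,k_2)]$ an inner interval of $\mathcal P_{xz}$ and $0\le j_1<j_2\le m$. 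The same holds for $\mathcal Q_{\mathcal P}$ and $\mathcal Q_{xz}$. Moreover, $V(\mathcal P)=V(\mathcal P_{xz})\times\{0,\ldots,m\}$ after reordering coordinates. Hence $\mathcal G_{\mathcal P}$ is obtained from the set of inner intervals of $\mathcal P_{xz}$ by a construction that does not depend on anything else. Concretely, for each inner interval $[\alpha,\beta]$ and each pair $j_1<j_2$, we take the join--meet relations of the Boolean lattice on the corners $\{\alpha_1,\beta_1\}\times\{j_1,j_2\}\times\{\alpha_2,\beta_2\}$.

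The core step is to extract from \cite{EHQR21} a bijection $\varphi\colon V(\mathcal P_{xz})\to V(\mathcal Q_{xz})$ such that $x_a\mapsto x_{\varphi(a)}$ induces the isomorphism of polyomino algebras. I expect the construction there to map inner intervals of $\mathcal P_{xz}$ bijectively onto inner intervals of $\mathcal Q_{xz}$, sending the four corners to the four corners while preserving the diagonal/anti-diagonal structure, so that generators correspond to generators. This is the step I expect to be the main obstacle. The isomorphism in \cite{EHQR21} may be stated only at the level of algebras, for instance via the identification of both rings with edge rings or with a toric ring given by a bipartite-type parametrization $x_{(i,k)}\mapsto s_it_k$. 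In that case I would re-derive the combinatorial correspondence as follows. An $L$-convex polyomino is determined, up to the relevant equivalence, by the comparability structure of its rows and columns. The EHQR construction permutes rows and columns, and a permutation $\sigma$ of $x$-values and $\tau$ of $z$-values sends an inner interval $[(i_1,k_1),(i_2,k_2)]$ to one with corners $\sigma(i_1),\sigma(i_2),\tau(k_1),\tau(k_2)$. Thus $\varphi=\sigma\times\tau$ on vertices. The relation $x_{(i_1,k_1)}x_{(i_2,k_2)}-x_{(i_1,k_2)}x_{(i_2,k_1)}$ is then carried to the analogous relation up to sign. This holds even if $\sigma$ reverses order on the two coordinates, since the diagonal and anti-diagonal pairs are merely interchanged.

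With $\varphi=\sigma\times\tau$ in hand, we define $\Phi\colon S_{\mathcal P}\to S_{\mathcal Q_{\mathcal P}}$ by $x_{(i,j,k)}\mapsto x_{(\sigma(i),j,\tau(k))}$, keeping the $y$-coordinate fixed. This is a graded isomorphism of polynomial rings. It then suffices to check $\Phi(\mathcal G_{\mathcal P})=\pm\mathcal G_{\mathcal Q_{\mathcal P}}$. For a cuboid $D$ of $\mathcal P$, the image of $L_D$ under $\sigma\times\mathrm{id}\times\tau$ is $L_{D'}$ for the cuboid $D'$ over the image inner interval with the same $j_1<j_2$. The map between these two Boolean lattices is a bijection which may reverse the order in the $x$ or $z$ factor. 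Any such coordinate reversal is an automorphism of $B_3$ up to relabelling. Such an automorphism sends incomparable pairs to incomparable pairs, and it sends the binomial $x_ax_b-x_{a\wedge b}x_{a\vee b}$ to $\pm$ the corresponding binomial. To verify this, we check that the reversal of one coordinate permutes the three-element set $\{\{a,b\},\{a\wedge b,a\vee b\}\}$ compatibly. In $B_3$, for an incomparable pair $a,b$, the pairs $\{a,b\}$ and $\{a\wedge b,a\vee b\}$ are exactly the pairs in the same fibre of the "sum" map, so coordinate reversal preserves this fibre structure.

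Combining these steps, $\Phi(I_{\mathcal P})=I_{\mathcal Q_{\mathcal P}}$, which gives the graded isomorphism $K[\mathcal P]\cong K[\mathcal Q_{\mathcal P}]$.
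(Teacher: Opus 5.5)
Your approach is the paper's: a product bijection $(i,k)\mapsto(\alpha(i),\beta(k))$ from the construction in \cite{EHQR21}, extended by the identity on $y$, with cuboids of $\mathcal P$ and $\mathcal Q_{\mathcal P}$ matched via Remark~\ref{rem:cuboid-rectangle}. The inner-interval correspondence, which you only ``expect'', follows as in the paper. The images of the four corners of an inner interval of $\mathcal P_{xz}$ are vertices of $\mathcal Q_{xz}$ forming the corners of a rectangle, and convexity of $\mathcal Q_{xz}$ makes that rectangle an inner interval; the same argument applies to $\sigma^{-1}$.

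The genuine gap is your claim $\Phi(\mathcal G_{\mathcal P})=\pm\mathcal G_{\mathcal Q_{\mathcal P}}$, which is false. Identify $L_D$ and $L_{D'}$ order-preservingly with $\{0,1\}^3$, and suppose the corner correspondence reverses only the $x$-order, i.e.\ acts by $(\epsilon_1,\epsilon_2,\epsilon_3)\mapsto(1-\epsilon_1,\epsilon_2,\epsilon_3)$. Then
\[
x_{010}x_{101}-x_{000}x_{111}\longmapsto x_{110}x_{001}-x_{100}x_{011}=f_{110,001}-f_{100,011}.
\]
Both monomials of the image are indexed by pairs incomparable in $\mathbb N^3$. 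Every element of $\mathcal G_{\mathcal Q_{\mathcal P}}$ has a monomial $x_{c\wedge d}x_{c\vee d}$ indexed by a comparable pair, so the image is not $\pm$ any generator.

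Both facts you invoke fail here:
\begin{itemize}
\item A single coordinate reversal does not preserve incomparability; it sends $\{100,011\}$ to $\{000,111\}$.
\item For a pair differing in all three coordinates, the fibre of the sum map consists of all four antipodal pairs of the cube, not just $\{a,b\}$ and $\{a\wedge b,a\vee b\}$.
\end{itemize}
You also cannot avoid this by choosing the bijection monotone. For the cross-shaped $\mathcal P_{xz}$ of Example~\ref{ex:L-convex}, $\sigma$ must carry the grid points $\{0,3\}^2$ missing from $V(\mathcal P_{xz})$ onto the points $\{2,3\}\times\{0,1\}$ missing from $V(\mathcal Q_{xz})$. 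This forces $\alpha(\{0,3\})=\{2,3\}$. So $\alpha$ reverses $0<1$ but preserves $2<3$, while both squares $[(0,1),(1,2)]$ and $[(2,1),(3,2)]$ share the $z$-pair $\{1,2\}$. Hence one of these two squares has exactly one of its orders reversed.

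The conclusion $\Phi(I_{\mathcal P})=I_{\mathcal Q_{\mathcal P}}$ still holds. Your sum-map fibre idea is the right tool once you use it for ideals rather than generators, which is exactly the paper's argument:
\begin{itemize}
\item Show that $J_D=(\mathcal G_{L_D})$ is generated by all $x_ax_b-x_cx_d$ with $a,b,c,d\in L_D$ and $\{a_t,b_t\}=\{c_t,d_t\}$ (as multisets) for $t=1,2,3$.
\item Each such binomial equals $f_{a,b}-f_{c,d}$, with $f=0$ on comparable pairs, because meets and joins are computed coordinatewise. Conversely, every $f_{a,b}$ has this form.
\item This generating set depends only on coordinate multisets, so any map acting by a bijection in each coordinate carries it onto the corresponding set for $L_{D'}$. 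Hence $\Phi(J_D)=J_{D'}$.
\end{itemize}
Summing over all cuboids then gives $\Phi(I_{\mathcal P})=I_{\mathcal Q_{\mathcal P}}$.
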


\begin{proof}
By the construction in the proof of \cite[Theorem~3.1]{EHQR21}, there are bijections $\alpha$ and $\beta$ of the sets of horizontal and vertical coordinates such that 
\[
\sigma:V(\mathcal P_{xz})\longrightarrow V(\mathcal Q_{xz}), \qquad \sigma(i,k) = (\alpha(i),\beta(k))
\]
is a bijection.
Since $\MP_{xz}$ is L-convex and $\mathcal Q_{xz}$ is a Ferrers diagram, both of them are convex polyominoes.
If $R = [(i_1,k_1),(i_2,k_2)]$ is an inner interval in $\MP_{xz}$, then its four corners map onto the four corners of 
\[
R' = \big[\big(\min\{\alpha(i_1),\alpha(i_2)\},\min\{\beta(k_1),\beta(k_2)\}\big),
\big(\max\{\alpha(i_1),\alpha(i_2)\},\max\{\beta(k_1),\beta(k_2)\}\big)\big]\,.
\]
Since all four corner vertices of $R'$ are in $V(\MQ_{xz})$ and $\MQ_{xz}$ is convex, $R'$ is an inner interval of $\MQ_{xz}$.
By applying the same argument to $\sigma^{-1}$, we get $\sigma$ induces a bijection between the inner intervals of $\MP_{xz}$ and $\MQ_{xz}$ through corner vertices. Thus, $\sigma$ gives an isomorphism $K[\mathcal P_{xz}]\cong K[\mathcal Q_{xz}]$~\cite[Theorem~3.1]{EHQR21}.

Since every nonempty $y$-column of both $\mathcal P$ and
$\mathcal Q_{\mathcal P}$ has height $m$,
\[
V(\mathcal P)
=
\{(i,j,k):(i,k)\in V(\mathcal P_{xz}),\ 0\leq j\leq m\},
\]
and
\[
V(\mathcal Q_{\mathcal P})
=
\{(i,j,k):(i,k)\in V(\mathcal Q_{xz}),\ 0\leq j\leq m\}.
\]
Hence $\sigma$ extends naturally to a bijection $\widehat{\sigma}:V(\mathcal P)
\longrightarrow V(\mathcal Q_{\mathcal P})$ by $\widehat{\sigma}(i,j,k)=(\alpha(i),j,\beta(k))$.

By uniform thickness and \cref{rem:cuboid-rectangle}, the induced correspondence of rectangle corners gives a
bijection $D \to D'$ between inner cuboids of $\MP$ and $\MQ_\MP$, satisfying $\widehat{\sigma}(L_D) = L_{D'}$. 
Indeed, let $D=[(i_1,j_1,k_1),(i_2,j_2,k_2)]\subseteq\mathcal P$ be a cuboid, see
\Cref{fig:polycube}. By \Cref{rem:cuboid-rectangle}, its projection $R=[(i_1,k_1),(i_2,k_2)]$ is an inner interval of $\mathcal P_{xz}$ (\Cref{fig:polyomino}), so there is an inner interval $R'=[(i_1',k_1'),(i_2',k_2')]$ of $\mathcal Q_{xz}$ (\Cref{fig:ferrerpolyomino}) such that $\sigma$ maps corner vertices of $R$ onto corner vertices of $R'$. As
$\mathcal Q_{\mathcal P}$ satisfies \Cref{setup:L-convex}, lifting $R'$
between the levels $j_1$ and $j_2$ gives, again by
\Cref{rem:cuboid-rectangle}, a cuboid
\[
D'=[(i_1',j_1,k_1'),(i_2',j_2,k_2')]\subseteq\mathcal Q_{\mathcal P}
\qquad\text{with}\qquad \widehat\sigma(L_D)=L_{D'},
\]
see \Cref{fig:ferrerpolycube}. Since $\sigma^{-1}$ also preserves inner intervals, the same argument applied to $\widehat\sigma^{-1}$ shows that every cuboid of $\mathcal Q_{\mathcal P}$ arises in this way.

Now, let $S_{\mathcal P}=K[x_a:a\in V(\mathcal P)]$ and $S_{\mathcal Q}=K[X_b:b\in V(\mathcal Q_{\mathcal P})].$ The bijection $\widehat{\sigma}$ induces a graded $K$-algebra
isomorphism $
\phi:S_{\mathcal P}\longrightarrow S_{\mathcal Q},$ defined by $\phi(x_a)=X_{\widehat{\sigma}(a)}.$ It is enough to show that $\phi(I_{\mathcal P})=I_{\mathcal Q_{\mathcal P}}.$

For a cuboid $D$ of $\MP$, let $J_D\subseteq S_{\mathcal P}$ be the ideal generated by
the binomials $f_{a,b}=x_ax_b-x_{a\wedge b}x_{a\vee b}$ with $a,b\in L_D$
incomparable. We first claim that $J_D$ is generated by the binomials
\begin{equation}\label{eq:multiset}
x_ax_b-x_cx_d,
\qquad a,b,c,d\in L_D,\quad
\{a_t,b_t\}=\{c_t,d_t\}\ \text{ for } t=1,2,3,
\end{equation}
where $a=(a_1,a_2,a_3)$, etc. Indeed, if $\{a_t,b_t\}=\{c_t,d_t\}$ for all
$t$, then $a\wedge b=c\wedge d$ and $a\vee b=c\vee d$, since these are
computed coordinatewise; hence $x_ax_b-x_cx_d=f_{a,b}-f_{c,d}$, where we set
$f_{a,b}=0$ if $a,b$ are comparable. Conversely each $f_{a,b}$ is of the form
\eqref{eq:multiset}, because $\{a_t,b_t\}=\{(a\wedge b)_t,(a\vee b)_t\}$.

The map $\widehat{\sigma}$ acts independently in each coordinate, so it preserves these coordinate-pair equalities
in both directions. Hence $\phi(J_D) = J_{D'}$, where $J_{D'}$ is defined similarly.
Summing over all inner cuboids yields
\[
\phi(I_{\mathcal P})=\sum_{D}\phi(J_D)=\sum_{D'}J_{D'}=I_{\mathcal Q_{\mathcal P}},
\]
which completes the proof.
\end{proof}

	\begin{figure}[h]
		\begin{subfigure}[t]{7cm}
			\centering
\begin{tikzpicture}[scale=2]
\unitcubeBig{1}{1}{0} ;

\draw[dotted, thick](1-.2,1,0)--(1,1,0);
\draw[dotted, thick](1,1-.2,0)--(1,1,0);
\draw[dotted, thick](1,1,0+.3)--(1,1,0);

\draw[dotted, thick](2+.2,2,-1)--(2,2,-1);
\draw[dotted, thick](2,2+.2,-1)--(2,2,-1);
\draw[dotted, thick](2,2,-1-.3)--(2,2,-1);

  \node [draw, circle, fill=black, inner sep=.5pt, label=above left:{\tiny{{$(i_1,j_1,k_1)$}}}] at (1,1,0) {}; 
  \node [draw, circle, fill=black, inner sep=.5pt, label=above right:{\tiny{{$(i_2,j_2,k_2)$}}}] at (1+1,1+1,-1) {}; 


\end{tikzpicture}
\caption{The cuboid $[(i_1,j_1,k_1), (i_2,j_2,k_2)]$ in $\MP$}\label{fig:polycube}
\end{subfigure}
\quad
\begin{subfigure}[t]{8cm}
\centering	
\begin{tikzpicture}[scale=1]
\fill[gray!12] (1,0) rectangle (4,3);
    \draw[] (1,1) -- (1,0)--(2,0) (3,0)--(4,0)--(4,1) (4,2)--(4,3)--(3,3) (2,3)--(1,3)--(1,2) ;
    \draw[dotted, thick] (2,0) -- (3,0) (4,1) -- (4,2) (3,3) -- (2,3) (1,1) --(1,2);

  \node [draw, circle, fill=black, inner sep=.5pt, label=left:{\tiny{{$(i_1,k_1)$}}}] at (1,0) {}; 
  \node [draw, circle, fill=black, inner sep=.5pt, label=right:{\tiny{{$(i_2,k_2)$}}}] at (4,3) {};    

  \node [draw, circle, fill=black, inner sep=.5pt, label=right:{\tiny{{$(i_2,k_1)$}}}] at (4,0) {}; 
  \node [draw, circle, fill=black, inner sep=.5pt, label=left:{\tiny{{$(i_1,k_2)$}}}] at (1,3) {};  
\end{tikzpicture}
\caption{The inner interval $[(i_1,k_1),(i_2,k_2)]$ in $\MP_{xz}$}\label{fig:polyomino}            
\end{subfigure}
\quad        
	\begin{subfigure}[t]{7cm}
	\centering
\begin{tikzpicture}[scale=1]
\fill[gray!12] (1,0) rectangle (4,3);
    \draw (1,1) -- (1,0)--(2,0) (3,0)--(4,0)--(4,1) (4,2)--(4,3)--(3,3) (2,3)--(1,3)--(1,2) ;
    \draw[dotted, thick] (2,0) -- (3,0) (4,1) -- (4,2) (3,3) -- (2,3) (1,1) --(1,2);

  \node [draw, circle, fill=black, inner sep=.5pt, label=left:{\tiny{{$(i'_1,k'_1)$}}}] at (1,0) {}; 
  \node [draw, circle, fill=black, inner sep=.5pt, label=right:{\tiny{{$(i'_2,k'_2)$}}}] at (4,3) {};    

  \node [draw, circle, fill=black, inner sep=.5pt, label=right:{\tiny{{$(i'_2,k'_1)$}}}] at (4,0) {}; 
  \node [draw, circle, fill=black, inner sep=.5pt, label=left:{\tiny{{$(i'_1,k'_2)$}}}] at (1,3) {};  



\end{tikzpicture}
\caption{ The inner interval $[(i'_1,k'_1),(i'_2,k'_2)]$ in polyomino $\MQ_{xz}$}\label{fig:ferrerpolyomino}
\end{subfigure}
		\quad
\begin{subfigure}[t]{7cm}
			\centering	
\begin{tikzpicture}[scale=2]
\unitcubeBig{1}{1}{0} ;

\draw[dotted, thick](1-.2,1,0)--(1,1,0);
\draw[dotted, thick](1,1-.2,0)--(1,1,0);
\draw[dotted, thick](1,1,0+.3)--(1,1,0);

\draw[dotted, thick](2+.2,2,-1)--(2,2,-1);
\draw[dotted, thick](2,2+.2,-1)--(2,2,-1);
\draw[dotted, thick](2,2,-1-.3)--(2,2,-1);

  \node [draw, circle, fill=black, inner sep=.5pt, label=above left:{\tiny{{$(i'_1,j_1,k'_1)$}}}] at (1,1,0) {}; 
  \node [draw, circle, fill=black, inner sep=.5pt, label=above right:{\tiny{{$(i'_2,j_2,k'_2)$}}}] at (1+1,1+1,-1) {}; 

\end{tikzpicture}
\caption{The cuboid $[(i'_1,j_1,k'_1), (i'_2,j_2,k'_2)]$ in polycube $\MQ_\MP$}\label{fig:ferrerpolycube}            
\end{subfigure}        
    \caption{}   
\end{figure}

Combining Lemma~\ref{lem:Q-setup},
Theorem~\ref{thm:L-convex-isomorphism}, and
Theorem~\ref{thm:GB}, we immediately obtain the following
corollary.

\begin{Corollary}\label{cor:L-convex-GB}
Let $\mathcal P$ be an $L$-convex polycube satisfying
Setup~\ref{setup:L-convex}. Then $I_{\mathcal P}$ admits a
quadratic Gr\"obner basis with respect to a graded reverse
lexicographic order. In particular, $K[\mathcal P]$ is Koszul.
\end{Corollary}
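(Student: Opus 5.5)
The plan is to transport the Gröbner basis from the Ferrers lift back to $\mathcal P$ along the isomorphism of Theorem~\ref{thm:L-convex-isomorphism}. By Lemma~\ref{lem:Q-setup}, $\mathcal Q_{\mathcal P}$ satisfies Setup~\ref{setup:polycube}. Hence Theorem~\ref{thm:GB} shows that $\mathcal G_{\mathcal Q_{\mathcal P}}$ is a quadratic Gröbner basis of $I_{\mathcal Q_{\mathcal P}}$ with respect to the reverse lexicographic order $<_{\mathrm{rev}}$ induced by a total order $\prec$ on the variables $X_b$ that extends the coordinatewise partial order on $V(\mathcal Q_{\mathcal P})$.

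Next we pull back along the graded $K$-algebra isomorphism $\phi\colon S_{\mathcal P}\to S_{\mathcal Q}$, $x_a\mapsto X_{\widehat\sigma(a)}$, from the proof of Theorem~\ref{thm:L-convex-isomorphism}. That proof shows $\phi(I_{\mathcal P})=I_{\mathcal Q_{\mathcal P}}$. On $S_{\mathcal P}$ we define the total order $x_a\prec' x_{a'}$ if and only if $X_{\widehat\sigma(a)}\prec X_{\widehat\sigma(a')}$, and we let $<'$ be the associated graded reverse lexicographic order. Since $\phi$ is a bijection on variables, it is a bijection on monomials, and it carries $<'$ onto $<_{\mathrm{rev}}$. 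Therefore $\phi(\operatorname{in}_{<'}(f))=\operatorname{in}_{<_{\mathrm{rev}}}(\phi(f))$ for every $f$, and so $\phi(\operatorname{in}_{<'}(I_{\mathcal P}))=\operatorname{in}_{<_{\mathrm{rev}}}(I_{\mathcal Q_{\mathcal P}})$. It follows that $\phi^{-1}(\mathcal G_{\mathcal Q_{\mathcal P}})$ is a Gröbner basis of $I_{\mathcal P}$ with respect to $<'$, and it consists of quadrics.

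The only point needing care is that $<'$ is a genuine graded reverse lexicographic order on $S_{\mathcal P}$. This is immediate, because it is the revlex order attached to the variable order $\prec'$. The pulled-back order need not extend the coordinatewise order on $V(\mathcal P)$, since $\alpha$ and $\beta$ may reverse directions, but the statement does not require that.

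Finally, Koszulness follows from the standard fact that a standard graded algebra whose defining ideal has a quadratic Gröbner basis is Koszul. Alternatively, $K[\mathcal P]\cong K[\mathcal Q_{\mathcal P}]$ as graded algebras, and the latter is Koszul by Theorem~\ref{thm:GB}. Neither step is a real obstacle; the whole argument is bookkeeping around the isomorphism $\phi$.
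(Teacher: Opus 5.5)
Your proposal is correct and is the paper's argument: the paper obtains the corollary immediately by combining Lemma~\ref{lem:Q-setup}, Theorem~\ref{thm:GB}, and the variable-bijection isomorphism $\phi$ with $\phi(I_{\mathcal P})=I_{\mathcal Q_{\mathcal P}}$ from Theorem~\ref{thm:L-convex-isomorphism}. Your explicit pullback of the revlex order along $\phi$, and your remark that it need not extend the coordinatewise order on $V(\mathcal P)$, simply spell out the step the paper leaves implicit.
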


We close the article with a characterization of the Cohen--Macaulay
$L$-convex polycubes. It is obtained by combining
\Cref{thm:L-convex-isomorphism} and \Cref{thm:main} with the structure of Ferrers diagrams.

\begin{Theorem}\label{thm:LconvexCM}
Let $\MP$ be an $L$-convex polycube satisfying \Cref{setup:L-convex}. Then
$K[\MP]$ is Cohen--Macaulay if and only if $\MP$ is a cuboid or $m=1$.
\end{Theorem}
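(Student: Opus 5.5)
The plan is to transfer the problem to the Ferrers-type polycube $\mathcal Q_{\mathcal P}$ and then apply \Cref{thm:main}. By \Cref{thm:L-convex-isomorphism}, $K[\mathcal P]\cong K[\mathcal Q_{\mathcal P}]$, and by \Cref{lem:Q-setup} the polycube $\mathcal Q_{\mathcal P}$ satisfies \Cref{setup:polycube} with the same $m$. Hence $K[\mathcal P]$ is Cohen--Macaulay if and only if either $\mathcal Q_{\mathcal P}$ is a cuboid, or $m=1$ and $\mathcal L_{\mathcal Q_{\mathcal P}}$ is a chain in $Q_{\mathcal Q_{\mathcal P}}$. Two things then remain: first, $\mathcal P$ is a cuboid if and only if $\mathcal Q_{\mathcal P}$ is; second, for $m=1$ the set $\mathcal L_{\mathcal Q_{\mathcal P}}$ is \emph{always} a chain. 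Together these give exactly ``$\mathcal P$ is a cuboid or $m=1$''. The direction ``Cohen--Macaulay and not a cuboid implies $m=1$'' is then just \Cref{prop:height-one} applied to $\mathcal Q_{\mathcal P}$.

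For the cuboid comparison I use the bijection $\widehat\sigma$ from the proof of \Cref{thm:L-convex-isomorphism}. This map sends inner cuboids to inner cuboids and corner lattices $L_D$ to $L_{D'}$, in both directions. The polycube $\mathcal P$ is a cuboid exactly when $V(\mathcal P)=V(D)$ for some inner cuboid $D$. Since $\widehat\sigma$ acts coordinatewise by bijections of the coordinate sets, this happens if and only if $\mathcal P_{xz}$ is an inner interval containing all of $V(\mathcal P_{xz})$. That condition is preserved by $\sigma$, because $\sigma$ preserves inner intervals and their corners. Hence $\mathcal P$ is a cuboid if and only if $\mathcal Q_{xz}$ is a rectangle, if and only if $\mathcal Q_{\mathcal P}$ is a cuboid.

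The main step is showing that $\mathcal L_{\mathcal Q}$ is a chain when $m=1$, where $\mathcal Q=\mathcal Q_{\mathcal P}$ is the lift of a Ferrers diagram with row lengths $\lambda_0\le\cdots\le\lambda_{n-1}$. Here $C(i,0,k)\in\mathcal Q$ if and only if $i<\lambda_k$. I would first show that every $\{a,b\}\in\mathcal S_{\mathcal Q}$ has the diagonal form $a=(i,1,k)$, $b=(i+1,0,k+1)$ with $C(i,0,k)\notin\mathcal Q$. For this I argue directly rather than through \Cref{lem:same-coordinate}, since that lemma only concludes non-Cohen--Macaulayness. Take a cover $a\lessdot b$ with $a=(i_1,1,k_1)$, $b=(i_2,0,k_2)$, $\pi(a)\le\pi(b)$, and $[\pi(a),\pi(b)]$ not an inner interval. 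I use the staircase shape: a rectangle with corners $(i_1,k_1)\le(i_2,k_2)$ in $V(\mathcal Q_{xz})$ fails to be inner precisely when its lower-right cell $C(i_2-1,0,k_1)$ is missing. If $i_2-i_1\ge2$ or $k_2-k_1\ge2$, I intend to produce an intermediate vertex, such as $(i_1+1,1,k_1)$ or $(i_1,1,k_1+1)$, that still lies in no common cuboid with $b$. That contradicts the cover relation, as in the proofs of \Cref{lem:SP-special} and \Cref{lem:SP-diagonal}. The same-coordinate cases are excluded because, in a Ferrers diagram, a non-inner vertical or horizontal segment between vertices cannot occur: vertical segments at fixed $i$ with endpoints in $V$ are always adjacent to present cells on the left, or, when $i=0$, on the right.

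Given the diagonal form, we have $(i,k)\in V(\mathcal Q_{xz})$ together with $C(i,0,k)\notin\mathcal Q$. This forces $i=\lambda_k$, and $(i+1,k+1)\in V$ forces $\lambda_{k+1}\ge\lambda_k+1$. So the set $T$ of such $(i,k)$ lies inside $\{(\lambda_k,k):\lambda_{k+1}>\lambda_k\}$, which is strictly increasing in both coordinates. The displayed chain of inequalities in the proof of \Cref{thm:LP-chain} then shows that $\mathcal L_{\mathcal Q}$ is a chain. This case analysis for the diagonal form is where I expect the main difficulty. If the direct argument becomes messy, my fallback is to invoke \Cref{lem:SP-special} and derive a contradiction with the Ferrers shape from its conclusion that the missing cells are arranged on both sides of the plane $x=i$.
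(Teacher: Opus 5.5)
Your skeleton matches the paper's proof. You transfer to $\MQ_\MP$ via \Cref{lem:Q-setup} and \Cref{thm:L-convex-isomorphism} and apply \Cref{thm:main}. You then exclude same-coordinate pairs in $\MS_{\MQ_\MP}$ by exhibiting cuboids from the Ferrers shape; these are the same cuboids the paper uses. The horizontal case is not literally symmetric to the vertical one for a Ferrers diagram and needs its own line: use row $k$ above the segment if $k<n$, and row $n-1$ below it if $k=n$. Finally you get the diagonal form, deduce $i=\lambda_k$, and close with the chain of inequalities.

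The one genuinely different sub-step is ``$\MP$ is a cuboid iff $\MQ_\MP$ is''. The paper avoids all combinatorics of $\sigma$: $K[\MP]\cong K[\MQ_\MP]$ preserves being a domain, and by \Cref{pro:primepolycube} primeness characterizes cuboids. Your route through $\widehat\sigma$ also works, but ``$\sigma$ preserves inner intervals and their corners'' needs one more line to show that rectangles go to rectangles. For example, if $\MP_{xz}$ is a rectangle, any two vertices differing in both coordinates span an inner interval. Hence so do $(0,0)$ and $(\lambda_{n-1},n)$ in $\MQ_{xz}$, which forces all $\lambda_k$ to be equal. Conversely, if $\MQ_{xz}$ is a rectangle, then $V(\MP_{xz})$ is a product set, and a convex polyomino with product vertex set is a rectangle.

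The step you single out as the main difficulty is already available. \Cref{lem:SP-diagonal} carries no Cohen--Macaulay hypothesis and applies verbatim to $\MQ_\MP\subseteq[(0,0,0),(l,1,n)]$ when $m=1$; the paper simply cites it once the same-coordinate cases are excluded. You should do the same, because the witnesses you propose, $(i_1+1,1,k_1)$ and $(i_1,1,k_1+1)$, can both fail.

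Take $\lambda_0=1$, $\lambda_1=3$, $a=(1,1,0)$ and $b=(3,0,2)$, so that $a<_{Q}b$. Here $(2,1,0)\notin V(\MQ_\MP)$. Also $(1,1,1)$ and $b$ both lie in $L_D$ for the cuboid $D=[(1,0,1),(3,1,2)]$, so $(1,1,1)$ is not below $b$ in $Q_{\MQ_\MP}$. What breaks $a\lessdot b$ is a vertex at level $y=0$ just below $b$, such as $(2,0,2)$. That is exactly the vertex $u$ in the proof of \Cref{lem:SP-diagonal}.

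Likewise, \Cref{lem:SP-special} concerns only same-coordinate pairs, so it is not a fallback for the diagonal case.
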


\begin{proof}
By \Cref{lem:Q-setup} and \Cref{thm:L-convex-isomorphism}, $K[\MP]\cong
K[\MQ_\MP]$, where $\MQ_\MP$ satisfies \Cref{setup:polycube} with the same
$m$. In particular, $I_\MP$ is prime if and only if $I_{\MQ_\MP}$ is, so by
\Cref{pro:primepolycube} $\MP$ is a cuboid if and only if $\MQ_\MP$ is. 
By \Cref{thm:main} it therefore suffices to show that $\ML_{\MQ_\MP}$ is a chain when $m=1$.

\begin{figure}[h]
    \centering
    \begin{tikzpicture}[scale=.8]
  \def\rows{2,3,3,5}   
  \pgfmathtruncatemacro\lmax{max(\rows)}
  \pgfmathtruncatemacro\nrows{dim({\rows})}
  \foreach \l [count=\k from 0, remember=\l as \lp (initially 1000)] in \rows {
    \foreach \i in {1,...,\l} \draw[fill=gray!12] (\i-1,\k) rectangle ++(1,1);
    \ifnum\lp<\l\relax 
      \draw[red,thick] (\lp,\k-1) -- (\lp+1,\k);
      \fill[red] (\lp,\k-1) circle (1.6pt) (\lp+1,\k) circle (1.6pt);
    \fi
  }
  \node[below left,font=\small] at (0,0) {$(0,0)$};
 \node[above right,font=\small] at (5,4) {$(5,4)$};
\end{tikzpicture}
    \caption{The Ferrers diagram $\MQ_{xz}$ with $\lambda_0=2,\lambda_1=3,\lambda_2=3,\lambda_3=5$}
    \label{fig:placeholder}
\end{figure}

Assume $m=1$. After a translation, assume that 
\[
\MQ_\MP = \{C(i,0,k) : 0 \leq k < n, \, 0\leq i < \lambda_k\} \qwhere 1\leq \lambda_0 \leq \cdots \leq \lambda_{n-1}\,. 
\]
See \cref{fig:placeholder} for a schematic showing of the Ferrers diagram $\MQ_{xz}$. 
Let $\{a,b\}\in\MS_{\MQ_\MP}$ with $a=(i,1,k)$ and $b=(i',0,k')$. Then
$i\leq i'$, $k\leq k'$, and there is no cuboid $D\subseteq \MQ_\MP$ such that $a,b\in L_D$.

If $i = i'$ and $k<k'$, then $a$ and $b$ are vertices of $L_D$ for the cuboid $D = [(i-1,0,k),(i,1,k')] \subseteq \MQ_\MP$ when $i\geq 1$, and $a$ and $b$ are vertices of $L_{D'}$ for the cuboid $D' = [(0,0,k),(1,1,k')] \subseteq \MQ_\MP$ when $i = 0$. 
If $i < i'$ and $k=k'$, then $a$ and $b$ are vertices of $L_D$ for the cuboid $D = [(i,0,k),(i',1,k+1)] \subseteq \MQ_\MP$ when $k<n$, and $a$ and $b$ are vertices of $L_{D'}$ for the cuboid $D' = [(i,0,n-1),(i',1,n)] \subseteq \MQ_\MP$ when $k = n$.

Hence, \cref{lem:SP-diagonal} gives $i'=i+1$ and $k'=k+1$. The unique unit-cube having $a$ and $b$ as corners is $C(i,0,k)$, so it is not in $\MQ_\MP$. 
Since $a= (i,1,k)\in V(\MQ_\MP)$, we have $i\leq \lambda_k$. Moreover, since $C(i,0,k)$ is not in $\MQ_\MP$, we have $i\geq \lambda_k$. Thus $i=\lambda_k$, and hence,
\[
\{a,b\} = \{(\lambda_k,1,k),(\lambda_k+1,0,k+1)\}\,.
\]

Now let $\{(\lambda_k,1,k),(\lambda_k+1,0,k+1)\}$ and
$\{(\lambda_h,1,h),(\lambda_h+1,0,h+1)\}$ be elements of $\MS_{\MQ_\MP}$ with
$k<h$. Since $h\ge k+1$ and $\lambda_h\ge\lambda_{k+1}>\lambda_k$, we have
$(\lambda_k+1,0,k+1)\le(\lambda_h,1,h)$ in $\mathbb N^3$. Hence
\[
(\lambda_k,1,k)<(\lambda_k+1,0,k+1)\le(\lambda_h,1,h)<(\lambda_h+1,0,h+1)
\]
in $Q_{\MQ_\MP}$, and $\ML_{\MQ_\MP}$ is a chain.
\end{proof}


\end{document}